\newtheorem{thm}{Theorem}[section]
\newtheorem{cor}[thm]{Corollary}
\newtheorem{prop}[thm]{Proposition}
\newtheorem{lem}[thm]{Lemma}
\theoremstyle{definition}
\newtheorem{defn}[thm]{Definition}
\newtheorem{defns}[thm]{Definitions}
\newtheorem{notns}[thm]{Notations}
\theoremstyle{remark}
\newtheorem{rem}[thm]{Remark}
\newtheorem{rems}[thm]{Remarks}
\numberwithin{equation}{section}
\newcommand{\set}[2]{\left \{ \: #1 \: \middle | \: #2 \: \right \} }
\newcommand{\exx}[1]{e^{i2\pi #1 }}
\newcommand{\xp}[1]{\exp \left( #1 \right)}
\newcommand{\wh}[1]{\widehat{#1}}
\newcommand{\wt}[1]{\widetilde{#1}}
\newcommand{\ov}[1]{\overline{#1}}
\newcommand{\conv}[1]{\stackrel{#1}{\longrightarrow}}
\newcommand{\id}{\mathrm{id}}
\newcommand{\lspan}{\mathrm{span} \,}
\newcommand{\N}{\mathbb{N}}
\newcommand{\Z}{\mathbb{Z}}
\newcommand{\R}{\mathbb{R}}
\newcommand{\Q}{\mathbb{Q}}
\newcommand{\C}{\mathbb{C}}
\newcommand{\T}{\mathbb{T}}
\newcommand{\cC}{\mathcal{C}}
\newcommand{\cE}{\mathcal{E}}
\newcommand{\cG}{\mathcal{G}}
\newcommand{\cH}{\mathcal{H}}
\newcommand{\cK}{\mathcal{K}}
\newcommand{\cL}{\mathcal{L}}
\newcommand{\cN}{\mathcal{N}}
\newcommand{\cO}{\mathcal{O}}
\newcommand{\cP}{\mathcal{P}}
\newcommand{\cR}{\mathcal{R}}
\newcommand{\cW}{\mathcal{W}}
\title{Enveloping semigroups and quasi-discrete spectrum}
\author{Juho Rautio}
\address{University of Oulu\\
   Department of Mathematical Sciences\\
   PL 8000\\
   FI-90014 Oulun yliopisto\\
   Finland}
\email{juho.rautio@oulu.fi}
\thanks{This paper is part of the author's Phd.D. work at the University of Oulu. The author is grateful for the assistance of his adviser Mahmoud Filali.}
\subjclass[2010]{37B05,43A60}
\keywords{Distal and minimal dynamical systems, distal functions, Ellis group, enveloping semigroup, quasi-discrete spectrum}
\begin{document}

\begin{abstract}
The structures of the enveloping semigroups of certain elementary finite- and infinite-dimensional distal dynamical systems are given, answering open problems posed by Namioka in 1982. The universal minimal system with (topological) quasi-discrete spectrum is obtained from the infinite-dimensional case. It is proved that, on one hand, a minimal system is a factor of this universal system if and only if its enveloping semigroup has quasi-discrete spectrum and that, on the other hand, such a factor need not have quasi-discrete spectrum in itself. This leads to a natural generalisation of the property of having quasi-discrete spectrum, which is named the $\cW$-property.
\end{abstract}

\maketitle

\section{Introduction}

The starting point of this work is the problem of describing the enveloping semigroups (or Ellis groups) of a particular class of distal dynamical systems. Although enveloping semigroups play an important part in the abstract theory of topological dynamics, concrete examples are relatively scarce, as Glasner points out in his survey \cite{glasner07}. The most prominent feature of the systems we are interested in is that they have quasi-discrete spectrum. This notion was first defined by Abramov in \cite{abramov62} in the setting of measure-preserving transformations, and the topological counterpart was introduced by Hahn and Parry in \cite{hahnparry65}. (We use the term `quasi-discrete spectrum' in the topological sense unless specified otherwise.) A simple example of a system with quasi-discrete spectrum is the skew product on the $2$-torus $\T^2$, $\T$ being the unit circle in $\C$, defined by the mapping
	\[(x,y) \mapsto (ax,xy) : \T^2 \to \T^2
\]
for some fixed constant $a \in \T$. The system is minimal if and only if $a = \exx{\alpha}$ for some irrational $\alpha \in \R$. The problem of finding the enveloping semigroup of this system, at least in the minimal case, goes back to articles of Furstenberg (\cite{furstenberg63}) and Namioka (\cite{namioka82}), the latter correcting the description given in the former. We can define analogous systems on $\T^m$ for any $m \geq 3$ or even $m = \N$, and finding their enveloping semigroups was left as an open problem in \cite{namioka82}. Some attempts and advances were made by Milnes in \cite{milnes86} and by Jabbari and Namioka in \cite{jabnam10}. Ultimately, the problem was left unsolved, but the topological centres were found in the latter work. The finite-dimensional case has been studied recently by Piku{\l}a (\cite{pikula10}), who showed that the Ellis groups are nilpotent on the basis of a partially explicit description of said groups. We present the structures of these groups in all the elementary cases in Theorems~\ref{sec:infdim} and \ref{sec:findim}. The Ellis groups of a class of distal systems containing those with quasi-discrete spectrum is given in quotient form in Theorem~\ref{sec:normalstructure} (see also Remarks~\ref{sec:wellis}). The key to all of these results is the space $\cH$ (defined in \ref{sec:H}), a closure of a certain family of self-homeomorphisms of $\T^{\Z_+}$, together with Lemma~\ref{sec:horbit}. In fact, we can endow $\cH$ with a group structure so that it becomes topologically isomorphic with the enveloping semigroup of any system on $\T^{\N}$ equipped with a mapping
	\[(x_1,x_2,x_3,\ldots) \mapsto (x_0 x_1, x_1 x_2, x_2 x_3,\ldots) : \T^{\N} \to \T^{\N},
\]
$x_0 \in \T$ being some constant. The techniques used in the proof of Lemma~\ref{sec:horbit} originate in \cite{namioka82}. Namely, we use Weyl's theorem on the distribution of polynomial sequences together with the Weyl criterion of uniform distribution in a compact abelian group.

We exploit the structure of $\cH$ to study systems with quasi-discrete spectrum, obtaining new results and clarifying and generalising some old ones. As Theorem~\ref{sec:hweyl}, we prove that $\cH$ can be seen as the spectrum of the so called Weyl algebra $\cW$, which is the closed subspace of $l^{\infty}(\Z)$ generated by all functions of the form $n \mapsto \exx{p(n)} : \Z \to \T$ where $p \colon \Z \to \R$ is a polynomial (see \cite{salehi91}), answering the second problem at the end of \cite{jabnam10}. These generators are connected to systems with quasi-discrete spectrum (Theorem~\ref{sec:qdsweyl}), and this connection was first studied, to the author's knowledge, by Hahn in \cite{hahn65}. In Corollary~\ref{sec:universal}, we deduce that $\cH$ is also the phase space of the universal minimal system with quasi-discrete spectrum, thus improving Brown's universal model in the topological case (\cite{brown69}). We can show that the system $\cH$ is uniquely ergodic, so all minimal systems with quasi-discrete spectrum are also uniquely ergodic (Theorem~\ref{sec:uew}).

In \cite{robertson74}, Robertson asked whether the factors of ergodic measure preserving transformations with quasi-discrete spectrum have quasi-discrete spectrum. This was proved to be true for totally ergodic transformations of standard probability spaces by Hahn and Parry in \cite{hahnparry68} some years earlier. One could also frame the question in the setting of topological dynamics: do the factors of minimal systems with quasi-discrete spectrum inherit these properties? In the last section, we shall construct an example showing that this is not the case. However, a detailed analysis of $\cH$ leads us to conclude that a minimal system is a factor of the universal minimal system with quasi-discrete spectrum if and only if the enveloping semigroup of said system has quasi-discrete spectrum; necessity is our Theorem~\ref{sec:wsystem}. This is achieved by finding all normal subgroups of the universal system that induce a Hausdorff quotient space. We introduce a generalisation of quasi-discrete spectrum, the $\cW$-property, on the basis of Theorem~\ref{sec:wsystem}. It is weaker than having quasi-discrete spectrum, and it is preserved in factors, subsystems, products and inverse limits.


\section{Preliminaries}

All topological spaces are assumed to be Hausdorff. We use $\N$ to denote the natural numbers beginning from $1$, and $\Z_+$ is the set of non-negative integers. The circle group $\T$ is understood as the space of complex numbers of modulus $1$. When convenient, we use the notation $\xp{\theta} = \exx{\theta}$, $\theta \in \R$. For a topological space $X$, the $C^{\ast}$-algebra of continuous, bounded, complex-valued functions on $X$ with the supremum norm is denoted by $\cC(X)$. If $X$ and $Y$ are topological spaces and $\pi \colon X \to Y$ is a continuous function, we define an adjoint mapping $\pi^{\ast} \colon \cC(Y) \to \cC(X)$ by $\pi^{\ast} f(x) = f(\pi(x))$ for all $f \in \cC(Y)$ and $x \in X$. We use the symbol $l^{\infty}$ to denote $l^{\infty}(\Z)$.

When $S$ is a semigroup, the \emph{left} and \emph{right translations} by $s \in S$ are the functions $\lambda_s, \rho_s \colon S \to S$, respectively, defined by $\lambda_s(t) = st$ and $\rho_s(t) = ts$, $t \in S$. We say that $S$ is \emph{right topological} if it carries a topology for which $\rho_s$ is continuous for each $s \in S$. The \emph{topological centre} of a right topological semigroup $S$ is the subsemigroup
	\[\Lambda(S) = \set{t \in S}{\lambda_t \text{ is continuous}}.
\]
These conventions follow \cite{bjm}.

We recall some of the key ideas of topological dynamics. For more information, see \cite{devries}. A \emph{dynamical system} or simply a \emph{system} is a pair $(X,s)$ where $X$ is a compact space, which we call the \emph{phase space} of the system, and $s \colon X \to X$ is a homeomorphism. The \emph{enveloping semigroup} of a system $(X,s)$ is the closure of the group $\{s^n\}_{n \in \Z}$ of iterates of $s$ under composition in the product space $X^X$, and it is denoted by $E(X,s)$. It is a compact right topological semigroup (under composition) with $\{s^n\}_{n \in \Z} \subseteq \Lambda(E(X,s))$. Note that, if $(X,s)$ is a dynamical system, so is $(E(X,s),\lambda_s)$.

Given a collection $\{(X_i,s_i)\}_{i \in I}$ of dynamical systems, their \emph{product system} is $(X,s)$ where $X = \prod_{i \in I} X_i$ and $s(x)_i = s_i(x_i)$, $x \in X$, $i \in I$.

If $(X,s)$ is a dynamical system, we say that a set $Y \subseteq X$ is \emph{invariant} if $s(Y) = Y$. If $Y \subseteq X$ is a non-empty, closed, invariant set, then the pair $(Y,s|_Y)$ is a \emph{subsystem} of $(X,s)$, and we tend to use the notation $(Y,s)$ for it. The \emph{orbit} of a point $x \in X$ is the invariant set $\cO(x) = \set{s^n x}{n \in \Z}$, and the \emph{positive semi-orbit} of $x$ is $\set{s^n x}{n \in \Z_+}$. The \emph{orbit closure} of $x \in X$ is the closure of its orbit in $X$. A \emph{transitive point} is a point $x \in X$ for which $\ov{\cO(x)} = X$, and if such a point exists, then the system $(X,s)$ is said to be \emph{point transitive}. Note that the identity mapping $\id_X = s^0$ is always a transitive point in $(E(X,s),\lambda_s)$, and the latter can be seen as a subsystem of products of $(X,s)$. We say that $(X,s)$ is \emph{minimal} if $X$ contains no proper, non-empty, closed, invariant sets. This property is equivalent to all points being transitive. It can also be shown that a system is minimal if and only if all positive semi-orbits are dense.

Consider two systems $(X,s)$ and $(Y,t)$. We say that a function $\pi \colon X \to Y$ is a \emph{homomorphism} if it is continuous, surjective and satisfies $\pi \circ s = t \circ \pi$. If such a mapping exists, we say that $(Y,t)$ is a \emph{factor} of $(X,s)$. An injective homomorphism is an \emph{isomorphism}. A homomorphism $\pi \colon X \to Y$ induces a continuous, surjective semigroup homomorphism $\wt{\pi} \colon E(X,s) \to E(Y,t)$ that satisfies (and is defined uniquely by) $\wt{\pi}(a)(x) = \pi(ax)$ for all $a \in E(X,s)$ and $x \in X$. We call it the \emph{induced homomorphism}. If $\pi$ is an isomorphism of flows, then $\wt{\pi}$ is an isomorphism of semigroups. The induced homomorphism is, of course, also a homomorphism from $(E(X,s),\lambda_s)$ to $(E(Y,t),\lambda_t)$. Note that the mapping $a \mapsto \lambda_a \colon E(X,s) \to E(E(X,s),\lambda_s)$ is an isomorphism of semigroups and of flows.

A system $(X,s)$ is \emph{distal} if the members of $E(X,s)$ are injective, i.e., taking any distinct points $x,y \in X$, the set of all pairs $(s^n(x),s^n(y))$, $n \in \Z$, has no cluster points on the diagonal of $X$. A well-known characterisation of distality states that a system $(X,s)$ is distal if and only if $E(X,s)$ is a group with $\id_X$ as its identity element, so the elements of $E(X,s)$ are bijections in this case. The enveloping semigroup of a distal system is often called its \emph{Ellis group}, and although this term has another meaning in the study of minimal flows (see 4.9, chapter 5, \cite{devries}), we shall use it from this point onwards. A second characterisation of distality is the following: a system $(X,s)$ is distal if and only if $(E(X,s),\lambda_s)$ is minimal or, equivalently, distal.

Suppose that $E$ is a compact, right topological group with an element $g \in \Lambda(E)$ such that $\{g^n\}_{n \in \Z}$ is dense in $E$. Let $T = \lambda_g$. Then, $(E,T)$ is a minimal distal system, and it is isomorphic to $(E(E,T),\lambda_T)$. Consequently, we can define an action of $E$ on any phase space of a factor of $(E,T)$ via the induced homomorphism: if $(X,s)$ is a dynamical system and $\pi \colon E \to X$ a homomorphism and $\wt{\pi} \colon E(E,T) \to E(X,s)$ the induced homomorphism, we define $ax = \wt{\pi}(\lambda_a)x$ for all $a \in E$ and $x \in X$. (The mapping $(a,x) \mapsto ax : E \times X \to X$ does not depend on the choice of $\pi$, provided that some homomorphism $\pi$ exists.) It is clear that $(ab)x = a(bx)$ for all $a,b \in E$ and $x \in X$. If $e \in E$ denotes the identity element, then for any $x \in X$, $ex = x$, and the mapping $a \mapsto ax : E \to X$ is continuous.

The factors of $(E,T)$ are in one-to-one correspondence with certain subgroups of $E$. To clarify this, suppose first that $(X,s)$ is a factor of $(E,T)$. If $\pi \colon E \to X$ is a homomorphism and if $x_0 = \pi(e)$, we can define $\cG = \cG(X,s,x_0) \subseteq E$ by
	\[\cG = \set{a \in E}{ax_0 = x_0}.
\]
It is easy to check that $\cG$ is a closed subgroup of $E$. Note also that $\cG = \pi^{-1}(x_0)$ since, for any $a \in E$, we have $ax_0 = \wt{\pi}(\lambda_a) \pi(e) = \pi(a)$. In addition, $\pi^{-1}(ax_0) = a\cG$ for any $a \in E$. Hence, the quotient space $E/\cG$ is canonically homeomorphic to $X$: if $a,b \in E$, then $a\cG = b\cG$ if and only if $\pi(a) = \pi(b)$. In particular, the quotient relation on $E$ induced by $\cG$ is closed. (This can be seen alternatively by evoking the $\tau$-topology on $E$; see \cite{auslander} or \cite{devries} for more information on this topic.) If another factor of $(E,T)$ produces the same group $\cG$ under this process, then it is easy to check that this factor is isomorphic to $(X,s)$. For the converse construction, suppose that we are given a subgroup $\cG^{\prime}$ of $E$ for which the quotient space $X^{\prime} = E/\cG^{\prime}$ is Hausdorff or, equivalently, the quotient relation is closed. Then, letting $\pi^{\prime} \colon E \to X^{\prime}$ be the quotient map and defining a function $s^{\prime} \colon X^{\prime} \to X^{\prime}$ by $s^{\prime}(a\cG^{\prime}) = ga\cG^{\prime}$, $a\cG^{\prime} \in X^{\prime}$, the pair $(X^{\prime},s^{\prime})$ is a dynamical system and $\pi^{\prime}$ is a homomorphism. Let $x_0^{\prime} = \pi^{\prime}(e)$. Now, the group $\cG(X^{\prime},s^{\prime},x_0^{\prime})$ coincides with $\cG^{\prime}$. Similarly, there is a one-to-one correspondence between the Ellis groups of factors of $(E,T)$ (which are again factors of $(E,T)$) and those normal subgroups $\cN$ of $E$ inducing a Hausdorff quotient space $E/\cN$, a compact right topological group.

Point transitive systems can be represented as certain kinds of subalgebras of $l^{\infty}$. If $(X,s)$ is a dynamical system and $x \in X$, we define a bounded linear operator $\omega_x \colon \cC(X) \to l^{\infty}$ by $\omega_x(f)(n) = f(s^n x)$ for all $f \in \cC(X)$ and $n \in \Z$. Let $R \colon l^{\infty} \to l^{\infty}$ be the translation operator: $R(f)(n) = f(n+1)$, $f \in l^{\infty}$, $n \in \Z$. Note that $\omega_x$ preserves complex conjugation, it maps constant functions to constant sequences, and $\omega_{x} \circ s^{\ast} = R \circ \omega_x$, so $\omega_x\cC(X)$ is a translation invariant $C^{\ast}$-subalgebra of $l^{\infty}$ containing the constants. If the point $x$ is transitive, then $\omega_x$ is an isometry. Conversely, given a translation invariant $C^{\ast}$-subalgebra $A \subseteq l^{\infty}$ that contains the constants, one can construct a related dynamical system $(X,s)$ with a transitive point $x \in X$ so that $\omega_x\cC(X) = A$, see section 5 of chapter 4 of \cite{devries}. Proposition 5.12 from this source, adapted to dynamical systems, states that, if $(X,s)$ and $(Y,t)$ are systems with transitive points $x \in X$ and $y \in Y$, respectively, then there exists a homomorphism $\pi \colon X \to Y$ with $\pi(x) = y$ if and only if $\omega_y\cC(Y) \subseteq \omega_x\cC(X)$. The homomorphism $\pi$ is an isomorphism if and only if $\omega_y\cC(Y) = \omega_x\cC(X)$. This theory can be seen as an extension of the theory of semigroup compactifications as outlined in \cite{bjm}; $m$-admissible subalgebras of $l^{\infty}$ correspond to enveloping semigroup systems.

Let $(X,s)$ be a dynamical system. By an \emph{invariant measure} on $X$ we mean a regular probability measure $\mu$ defined on the Borel $\sigma$-algebra $B(X)$ so that $\mu(s^{-1}(B)) = \mu(B)$ for every $B \in B(X)$. The system $(X,s)$ is \emph{uniquely ergodic} if there exists only one such $\mu$. The unique ergodicity of a system whose phase space contains a dense positive semi-orbit can be determined by examining the functions in the associated subalgebra of $l^{\infty}$:

\begin{thm}
\label{sec:ueambit}
Let $(X,s)$ be a dynamical system with a point $x \in X$ whose positive semi-orbit is dense in $X$. The system $(X,s)$ is uniquely ergodic if and only if there exists a family $F \subseteq \cC(X)$ such that $\lspan F$ is dense in $\cC(X)$ and, for every $f \in \omega_x F$, there is some $A(f) \in \C$ so that the average $(1/N) \sum_{n = 0}^{N-1} f(n+k)$ converges to $A(f)$ uniformly in $k \in \Z_+$ as $N$ grows.
\end{thm}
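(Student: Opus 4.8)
The plan is to reduce the statement to the classical equivalence between unique ergodicity and the uniform convergence of Birkhoff averages, using the density of the positive semi-orbit to move between ``uniformly in $k \in \Z_+$'' and ``uniformly on $X$''. For $f \in \cC(X)$ and $y \in X$ write $\phi_N^f(y) = (1/N) \sum_{n=0}^{N-1} f(s^n y)$; each $\phi_N^f$ lies in $\cC(X)$ and satisfies $\|\phi_N^f\|_\infty \leq \|f\|_\infty$. Since $\omega_x f(n+k) = f(s^{n+k} x)$, the average in the statement equals $\phi_N^f(s^k x)$, so the hypothesis on $\omega_x F$ says precisely that $\phi_N^f(s^k x) \to A(f)$ uniformly in $k$ for each $f \in F$. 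As $\{s^k x : k \in \Z_+\}$ is dense and $\phi_N^f$ is continuous, $\sup_{y \in X} |\phi_N^f(y) - A(f)| = \sup_{k} |\phi_N^f(s^k x) - A(f)|$; hence uniform convergence in $k$ is equivalent to $\phi_N^f \to A(f)$ uniformly on $X$. This single observation bridges both implications.

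For necessity I would take $F = \cC(X)$ and show that unique ergodicity forces $\phi_N^f \to \int f \, d\mu$ uniformly on $X$, where $\mu$ is the unique invariant measure; restricting to the semi-orbit then yields the stated condition with $A(f) = \int f \, d\mu$. Assume, towards a contradiction, that for some $f$ there are $\varepsilon > 0$, integers $N_j \to \infty$ and points $y_j \in X$ with $|\phi_{N_j}^f(y_j) - \int f \, d\mu| \geq \varepsilon$. The empirical measures $\mu_j = (1/N_j) \sum_{n=0}^{N_j - 1} \delta_{s^n y_j}$ are probability measures on the compact space $X$, so by weak-$*$ compactness a subnet converges to some regular probability measure $\nu$. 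The estimate $|\int f \circ s \, d\mu_j - \int f \, d\mu_j| \leq (2/N_j)\|f\|_\infty \to 0$ shows $\nu$ is invariant, whence $\nu = \mu$. But $\int f \, d\mu_j = \phi_{N_j}^f(y_j)$, so passing to the limit along the subnet gives $\int f \, d\mu = \int f \, d\nu$, in contradiction with the choice of the $y_j$. This compactness-and-contradiction step is where I expect the real work to lie.

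For sufficiency, the reduction above turns the hypothesis into $\phi_N^f \to A(f)$ uniformly on $X$ for every $f \in F$. Extending $A$ linearly, the same holds for $f \in \lspan F$. A standard three-term estimate, using $\|\phi_N^f\|_\infty \leq \|f\|_\infty$ and the density of $\lspan F$, shows $(\phi_N^f)_N$ is uniformly Cauchy for every $f \in \cC(X)$; its uniform limit is within $\varepsilon$ of the constant $A(h)$ whenever $\|f - h\|_\infty < \varepsilon$ with $h \in \lspan F$, so the limit is itself a constant, which I call $A(f)$. The resulting map $A \colon \cC(X) \to \C$ is linear, positive and satisfies $A(1) = 1$, so Riesz representation provides a regular Borel probability measure $\mu$ with $A(f) = \int f \, d\mu$. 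Invariance of $\mu$ follows from $\phi_N^{f \circ s} - \phi_N^f = (1/N)(f \circ s^N - f) \to 0$ uniformly, giving $\int f \circ s \, d\mu = \int f \, d\mu$.

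Finally, uniqueness: if $\nu$ is any invariant measure, then invariance gives $\int \phi_N^f \, d\nu = \int f \, d\nu$ for every $N$ and $f$, while $\phi_N^f \to A(f)$ uniformly forces $\int \phi_N^f \, d\nu \to A(f)$. Hence $\int f \, d\nu = A(f) = \int f \, d\mu$ for all $f \in \cC(X)$, so $\nu = \mu$ and the system is uniquely ergodic. Besides the compactness argument in the necessity half, the only point demanding care is verifying in the sufficiency half that the uniform limit over all of $\cC(X)$ is genuinely a constant rather than merely a continuous function, which the approximation estimate above is designed to settle.
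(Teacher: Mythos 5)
Your proof is correct and follows essentially the same route as the paper: reduce ``uniformly in $k \in \Z_+$'' to ``uniformly on $X$'' by noting that the supremum of the continuous function $|\phi_N^f - A(f)|$ over the dense positive semi-orbit equals its supremum over $X$, and then appeal to the classical characterisation of unique ergodicity via uniform convergence of Birkhoff averages --- which the paper simply cites (10.6.8 in de Vries) and you prove in full via empirical measures, weak-$*$ compactness and Riesz representation. The only cosmetic point is in the necessity half: the telescoping estimate showing the limit measure $\nu$ is invariant must be applied to every $g \in \cC(X)$, not just to the particular $f$ under consideration, but the identical computation works for all $g$, so nothing is missing.
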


The proof is based on the well-known characterisation of unique ergodicity in terms of the convergence of the time averages (10.6.8 in \cite{devries}). This theorem implies that all factors of minimal uniquely ergodic systems are uniquely ergodic.

We quote a key theorem on distributions of sequences on $\T$ arising from polynomials.

\begin{thm}
\label{sec:poly}
Let $p \colon \N \to \R$ be a polynomial of degree $m \in \N$, say $p(n) = \sum_{k=0}^m \theta_k n^k$, $n \in \N$, $\theta_k \in \R$ for all $0 \leq k \leq m$. Then, there exists a constant $c \in \C$ such that
	\[\frac{1}{N} \sum_{n = 1}^N \exx{p(n+k)} \conv{N} c
\]
uniformly in $k \in \Z_+$. We have $c = 0$ in the following cases:
\begin{itemize}
\item[(i)] $\theta_k \in \R \setminus \Q$ for some $k \neq 0$;
\item[(ii)] $m = 1$ and $\theta_1 \notin \Z$.
\end{itemize}
\end{thm}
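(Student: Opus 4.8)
The plan is to separate the part of $p$ that produces periodic behaviour from the part that produces equidistribution, and to reduce the whole statement to a single estimate for polynomials whose \emph{leading} coefficient is irrational, which I would then prove by van der Corput differencing while keeping every bound uniform in the shift. It is convenient to allow the shift to be any real number: I would prove the sharper statement that the averages converge uniformly over all real translates, which in particular covers $k \in \Z_+$. Write $p = p_{\mathrm{rat}} + p_{\mathrm{irr}}$, where $p_{\mathrm{rat}}$ collects the constant term and every monomial $\theta_j n^j$ with $\theta_j \in \Q$, and $p_{\mathrm{irr}}$ collects the monomials with $\theta_j \notin \Q$. Since $(n+q)^j \equiv n^j \pmod q$, each rational monomial $n \mapsto \exx{(a/q)n^j}$ is periodic, so $n \mapsto \exx{p_{\mathrm{rat}}(n)}$ is periodic with some period $Q \in \N$. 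If every non-constant $\theta_j$ is rational, then $p_{\mathrm{irr}} = 0$ and $n \mapsto \exx{p(n)}$ is periodic of period $Q$; its Cesàro averages converge to the mean $c = \frac1Q\sum_{j=0}^{Q-1}\exx{p(j)}$ over one period, and because translating a periodic sequence leaves this mean unchanged, the limit is the same $c$ for every $k$ with error at most $2Q/N$, hence uniform in $k$. This is the only regime in which $c$ may be nonzero.

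The heart of the matter is the estimate: \emph{if $q$ is a real polynomial of degree $d \ge 1$ whose leading coefficient is irrational, then $\sup_{t \in \R}\bigl|\frac1N\sum_{n=1}^N \exx{q(n+t)}\bigr| \to 0$ as $N \to \infty$}, which I would prove by induction on $d$. For $d = 1$, writing $q(n) = \beta_1 n + \beta_0$ with $\beta_1 \notin \Z$, one has $\sum_{n=1}^N \exx{q(n+t)} = \exx{\beta_1 t + \beta_0}\sum_{n=1}^N \exx{\beta_1 n}$, and the geometric sum is bounded by $2/|\exx{\beta_1} - 1|$ independently of $N$ and $t$, so the average tends to $0$ uniformly in $t$. (This already settles case (ii): $\theta_1 \notin \Z$ forces $c = 0$, whether $\theta_1$ is rational or irrational.) For $d \ge 2$, van der Corput's inequality gives, for $1 \le H \le N$ and $z_n = \exx{q(n+t)}$, a bound of the form
\[
\left| \frac1N \sum_{n=1}^N z_n \right|^2 \;\le\; \frac{C}{H} + \frac{C}{H}\sum_{h=1}^{H-1}\left| \frac1N \sum_{n=1}^{N-h} z_{n+h}\,\ov{z_n} \right|,
\]
with $C$ absolute. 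The crucial point is that $z_{n+h}\ov{z_n} = \exx{g_h(n+t)}$ with $g_h(x) = q(x+h) - q(x)$ of degree $d-1$ and leading coefficient $d\beta_d h$, which is again irrational. Each inner average therefore falls under the induction hypothesis and tends to $0$ uniformly in $t$; choosing $H$ large to make $C/H$ small and then $N$ large (the $h$-sum being finite) completes the step, uniformly in $t$.

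It remains to treat case (i) in general. Here some non-constant $\theta_j$ is irrational, so $p_{\mathrm{irr}}$ has degree $d' \ge 1$ with irrational leading coefficient $\vartheta$. Fixing a residue $r \in \{0,\dots,Q-1\}$ and substituting $n = Qj + r$, the factor $\exx{p_{\mathrm{rat}}(n+k)}$ equals the unimodular constant $\exx{p_{\mathrm{rat}}(r+k)}$, while $j \mapsto p_{\mathrm{irr}}(Qj + r + k) = q(j + (r+k)/Q)$, with $q(x) = p_{\mathrm{irr}}(Qx)$ fixed of degree $d'$ and leading coefficient $\vartheta Q^{d'} \notin \Q$. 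The estimate above then makes each of the $Q$ inner averages tend to $0$ uniformly in $k$, so each inner sum is $o(N)$ uniformly; summing the $Q$ unimodular-weighted progressions and dividing by $N$ yields convergence to $c = 0$, uniform in $k$. As the all-rational case together with (i) and (ii) exhaust every polynomial, the constant $c$ always exists with uniform convergence and vanishes exactly as claimed.

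The genuinely non-classical feature of the statement is the uniformity in the shift, so the main work lies in checking that every estimate—the geometric bound, the van der Corput inequality, and the inductive passage—is uniform in the real translate; I expect this bookkeeping to be the principal obstacle. The one structural subtlety is that plain differencing of $p$ need not lower the rational top coefficient (a rational leading term can dominate an irrational lower term), which is exactly why I first strip off $p_{\mathrm{rat}}$ and reduce to a polynomial with irrational \emph{leading} coefficient before invoking van der Corput.
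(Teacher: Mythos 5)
Your proof is correct, but it takes a genuinely different route from the paper, which in fact does not prove the statement at all: the theorem sits in the Preliminaries as a quoted result, with case (i) attributed to Lawton's 1959 theorem that $(p(n))$ is \emph{well distributed} modulo $1$ whenever some non-constant coefficient is irrational, and only the all-rational case --- periodicity of $(\exx{p(n)})$, convergence of the averages to the mean over one period, and the vanishing of that mean when $m=1$ and $\theta_1 \notin \Z$ --- argued directly. What you have done, in effect, is reprove Lawton's theorem from scratch: your splitting $p = p_{\mathrm{rat}} + p_{\mathrm{irr}}$, the passage to arithmetic progressions $n = Qj + r$ to force an irrational \emph{leading} coefficient, and the van der Corput induction carried out uniformly over real translates together constitute the standard proof of well distribution of polynomial sequences. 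Two points in your write-up are genuinely load-bearing and worth emphasising: first, proving the key estimate uniformly over all real shifts $t$, rather than just integers $k$, is exactly what legitimises the substitution $n = Qj + r$, which produces the non-integer shift $(r+k)/Q$; second, your closing remark that naive differencing can fail when the top coefficient is rational is accurate --- for $p(n) = \tfrac12 n^2 + \alpha n$ the differenced sequence $p(n+h)-p(n)$ has integer leading coefficient and is constant modulo $1$, so van der Corput gives nothing without the progression-splitting reduction. The trade-off between the two approaches is the expected one: the paper's citation keeps its preliminaries short and leans on the classical literature, while your argument is self-contained, yields the stronger uniformity over real translates, and unifies case (ii) with the base case of the induction, since a non-integer slope (rational or not) already gives a uniformly bounded geometric sum.
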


Case (i) says that the sequence $(p(n))$ in $\R$ is well distributed modulo $1$ if at least one of the coefficients $\theta_k$, $k \neq 0$, is irrational (Theorem 2 in \cite{lawton59}), and the sequence $(\exx{p(n)})$ is uniformly distributed in $\T$ (see \cite{kuipers} for definitions). If the polynomial $p(n) = \sum_{k=0}^m \theta_k n^k$ is such that $\theta_k \in \Q$ for every $k \geq 1$, then the sequence $(\exx{p(n)})$ is periodic, say $\exx{p(n + t)} = \exx{p(n)}$ for some $t \in \N$, and the term $(1/N) \sum_{n = 1}^N \exx{p(n+k)}$ converges to $(1/t) \sum_{n = 1}^t \exx{p(n)}$ uniformly in $k \in \Z_+$. This average is $0$ if $m = 1$ and $\theta_1 \notin \Z$.

Next, we give an overview of the necessary ideas and constructions relating to abelian topological groups. Most of the facts covered here can be found in \cite{hewittross} in one form or another.

If $G$ is an abelian group, we define a group homomorphism $n^{\times} \colon G \to G$ for each $n \in \Z$ by $n^{\times}(g) = g^n$, $g \in G$. We define $H(G)$ to be the set of group homomorphisms from $G$ to $\T$. It is a closed subspace of the product space $\T^G$, so it is compact, and we define a group operation on $H(G)$ by $(\phi \psi)(x) = \phi(x) \psi(x)$ for all $\phi,\psi \in H(G)$ and $x \in G$. In other words, $H(G)$ is the character group of the discretised $G$. As such, $H(G)$ is a compact, abelian topological group.

We are interested in $H(\T)$, the \emph{endomorphism group} of $\T$. Obviously, the character group $\wh{\T} = \{n^{\times}\}_{n \in \Z}$ is contained in $H(\T)$, and in fact the homomorphism $n \mapsto n^{\times} : \Z \to H(\T)$ realises the universal topological group compactification of $\Z$. In particular, the character group of $\T$ is dense in $H(\T)$, but there is an abundance of non-continuous functions in $H(\T)$. To get a glimpse of this, observe that, for any $x,y \in \T$ with $y = \exx{\theta}$ for some $\theta \in \R \setminus \Q$, there exists a function $\phi \in H(\T)$ such that $\phi(y) = x$. This follows from the fact that the rotation of the circle by angle $\theta$ defines a minimal system $(\T,\lambda_y)$ (Proposition 1.4 in chapter 3 of \cite{devries}), so the orbit of $1$ is dense, and there is a net $(n_\lambda)$ in $\Z$ for which $y^{n_\lambda} \conv{\lambda} x$. Any cluster point of the net $(n_\lambda^{\times})$ can serve as the desired $\phi$, and if $x$ is not in the cyclic group generated by $y$, the function $\phi$ is not continuous.

We define the following subgroups of $\T$:
\begin{align*}
\T_Q &= \set{\exx{q}}{q \in \Q}, \\
\T_k &= \set{\exx{(n/k)}}{n \in \Z} = \set{x \in \T}{x^k = 1}, k \in \N.
\end{align*}
The group $\T_Q$ is the union of the finite cyclic groups $\T_k$, and it is the torsion subgroup of $\T$. Hence, $\T$ is isomorphic to the direct product $\T/\T_Q \times \T_Q$ as a group. To construct an isomorphism between these two, pick a Hamel basis $B_1$ for $\R$ viewed as a vector space over $\Q$ so that $1 \in B_1$. Then, define $B = B_1 \setminus \{1\}$, and let $F_1 \colon \R \to \Q$ be the linear mapping that picks the scalar multiplier of $1$ in the representation of a vector as a linear combination of elements from $B_1$, i.e.,
	\[F_1 \left( q + \sum_{j=1}^{k} q_j b_j \right) = q
\]
for all $q,q_j \in \Q$, $b_j \in B$, $1 \leq j \leq k$, $k \in \N$. We can now define group homomorphisms $\Phi_1 \colon \T \to \T/\T_Q$, $\Phi_2 \colon \T \to \T_Q$ and $\Phi \colon \T \to \T/\T_Q \times \T_Q$ by
\begin{align*}
\Phi_1(x) &= x \T_Q,\\
\Phi_2(\exx{\theta}) &= \exx{F_1(\theta)},\\
\Phi(x) &= (\Phi_1(x),\Phi_2(x))
\end{align*}
for all $x \in \T$, $\theta \in \R$. The mapping $\Phi$ is an isomorphism. This structure allows us to find a sufficiently explicit description of $H(\T)$. To be precise, the latter is topologically isomorphic to the direct product $H(\T/\T_Q) \times H(\T_Q)$: we define a continuous isomorphism $\Psi \colon H(\T/\T_Q) \times H(\T_Q) \to H(\T)$ by
	\[\Psi(\phi,\psi) = (\phi \circ \Phi_1) (\psi \circ \Phi_2)
\]
for all $(\phi, \psi) \in H(\T/\T_Q) \times H(\T_Q)$. The inverse of $\Psi$ has coordinate functions $(\Psi^{-1})_1 \colon H(\T) \to H(\T/\T_Q)$ and $(\Psi^{-1})_2 \colon H(\T) \to H(\T_Q)$ so that
\begin{align*}
(\Psi^{-1})_1(\phi)(x \T_Q) &= \phi(x \Phi_2(\ov{x})), \\
(\Psi^{-1})_2(\phi)(y) &= \phi(y),
\end{align*}
$\phi \in H(\T), x \in \T, y \in \T_Q$.

We need to understand the structure of $H(\T/\T_Q)$. To this end, we define the $a$\emph{-adic solenoid} $\Sigma_a$ for $a = (2,3,4,\ldots)$ as
	\[\Sigma_a = \set{x \in \T^{\N}}{x_k^k = x_{k-1} \text{ for all } k \geq 2}.
\]
This is a closed subgroup of the product group $\T^{\N}$, so it is a compact, abelian topological group. The group $H(\T/\T_Q)$ is topologically isomorphic to the product group $\Sigma_a^B$. Indeed, we can define a continuous isomorphism $\Theta \colon H(\T/\T_Q) \to \Sigma_a^B$ by the formula
	\[\Theta(\phi)(b) = (\phi(\exx{(b/k!)}\T_Q))_{k = 1}^{\infty},
\]
$\phi \in H(\T/\T_Q)$, $b \in B$. The character group of $\Sigma_a$ is algebraically isomorphic to $\Q$. The mapping $J \colon \Q \to \wh{\Sigma_a}$, defined by $J(n/m!)(x) = x_m^n$ for all $n \in \Z$, $m \in \N$ and $x \in \Sigma_a$, is a well-defined group isomorphism. 

Note that, for all $k \in \N$ and $\phi \in H(\T)$, we have $\phi(\T_k) \subseteq \T_k$. In fact, there is some $n \in \Z$ depending on $\phi$ and $k$ so that $\phi(x) = x^n$ for all $x \in \T_k$. Also, $\phi(\T_Q) \subseteq \T_Q$. This means that $H(\T_Q)$ is a semigroup under composition. Moreover, the mapping
	\[(\phi,\psi) \mapsto \phi(\psi(x)) : H(\T) \times H(\T) \to \T
\]
is continuous for any $x \in \T_Q$, as the reader can verify, and composition is jointly continuous and commutative on $H(\T_Q)$. This is not the case for $H(\T)$, which is only a compact right topological semigroup under composition.

We conclude the preliminary section by recalling some basic facts concerning binomial coefficients. For each $n \in \Z$ and $k \in \N$, we define
\begin{align*}
\binom{n}{0} &= 1, \\
\binom{n}{k} &= \frac{n(n-1)(n-2)\cdots(n-k+1)}{k!}.
\end{align*}
This is a generalisation of the usual formula with $0 \leq k \leq n$, and the case of negative $n$ reduces to the non-negative case by the formula
	\[\binom{n}{k} = (-1)^k \binom{k-n-1}{k},
\]
$0 \leq k \leq n$. The numbers $\binom{n}{k}$ are integers, and the mapping $n \mapsto \binom{n}{k} \colon \Z \to \Z$ is a polynomial of degree $k$ for any $k \in \Z_+$. We may write
	\[\binom{n}{k} = \frac{1}{k!}\sum_{j=0}^{k}s(k,j)n^j
\]
where the integers $s(k,j)$ are called the \emph{Stirling numbers of the first kind}. Note that $s(0,0) = 1$, but $s(k,0) = 0$ for $k \geq 1$. Lastly, the generalised binomial coefficients satisfy some of the same classical identities as the usual ones: 
\begin{itemize}
\item[(i)] \emph{Pascal's rule:} $\binom{n}{k} + \binom{n}{k+1} = \binom{n+1}{k+1}$ for all $n \in \Z$ and $k \in \Z_+$;
\item[(ii)] \emph{Vandermonde's identity:} $\sum_{j=0}^{k} \binom{m}{j} \binom{n}{k-j} = \binom{m+n}{k}$ for all $m,n \in \Z$ and $k \in \Z_+$.
\end{itemize}


\section{Ellis groups}

This section is devoted to finding the Ellis groups of the dynamical systems described below.

\begin{defns}
Let $m \in \N$ and $x_0 \in \T$. By the $(m,x_0)$\emph{-system} we mean the dynamical system $(\T^m,r)$ where the homeomorphism $r \colon \T^m \to \T^m$ is defined by
	\[r(x_1,x_2,\ldots,x_m) = (x_0 x_1,x_1 x_2,\ldots,x_{m-1} x_m)
\]
for all $(x_k)_{k = 1}^m \in \T^m$. The $(\infty,x_0)$\emph{-system} is $(\T^{\N},s)$ where $s \colon \T^{\N} \to \T^{\N}$ is the homeomorphism
	\[s(x_1,x_2,x_3,\ldots) = (x_0 x_1,x_1 x_2,x_2 x_3,\ldots),
\]
$(x_k)_{k=1}^{\infty} \in \T^{\N}$. When we wish to refer to an $(m,x_0)$-system without specifying the parameters $m$ and $x_0$, we speak of a \emph{basic system}.
\end{defns}

\begin{notns}
If $G$ is a compact abelian group, we define a continuous group automorphism $A_G \colon G^{\Z_+} \to G^{\Z_+}$ by
	\[A_G(g_0,g_1,g_2,g_3,\ldots) = (g_0,g_0 g_1, g_1 g_2, g_2 g_3,\ldots),
\]
$(g_k)_{k=0}^{\infty} \in G^{\Z_+}$. We put $t = A_{\T}$ and $T = A_{H(\T)}$. We also define $\wt{0} \in H(\T)^{\Z_+}$ by
	\[\wt{0} = ( 1^{\times}, 0^{\times}, 0^{\times}, \ldots ) = (\id_{\T},1,1,\ldots),
\]
and we put $\wt{n} = T^n \wt{0}$ for all $n \in \Z$.
\end{notns}

Variants of our next proposition appear in many articles dealing with basic systems, e.g. \cite{hahnparry65} and \cite{pikula10}, but only for positive iterates, and explicit proofs tend to be omitted.

\begin{prop}
\label{sec:iter}
Let $G$ be a compact abelian group. For every $n \in \Z$, $k \in \Z_+$ and $g \in G^{\Z_+}$,
\begin{equation}
\label{iterate}
A_G^n(g)_k = \prod_{j=0}^{k} \binom{n}{k-j}^{\times} (g_j).
\end{equation}
In particular,
	\[\wt{n} = \left( \binom{n}{0}^{\times}, \binom{n}{1}^{\times}, \binom{n}{2}^{\times}, \ldots \right)
\]
for every $n \in \Z$.
\end{prop}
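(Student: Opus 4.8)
The plan is to prove \eqref{iterate} by induction on $n$, using the commutativity of $G$ in an essential way: since each $i^{\times}$ is a homomorphism and $G$ is abelian, the coordinate $A_G^n(g)_k$ is completely described by the integer exponent $\binom{n}{k-j}$ attached to each $g_j$, and combining two such products simply adds the corresponding exponents. (Conceptually, in additive notation $A_G = \id + t$, where $t$ is the shift $(tg)_k = g_{k-1}$ with $g_{-1} = 0$; thus $A_G^n$ is ``multiplication by'' the formal series $(1+t)^n = \sum_m \binom{n}{m} t^m$, whose coefficient of $g_j$ in coordinate $k$ is exactly $\binom{n}{k-j}$, the sum being finite because only the terms with $m \leq k$ survive.) Granting \eqref{iterate}, the final assertion is immediate: specialising $g = \wt{0}$ kills every factor with $j \geq 1$, since $\wt{0}_j$ is then the identity of $H(\T)$, while the $j = 0$ factor is $\binom{n}{k}^{\times}(1^{\times}) = (1^{\times})^{\binom{n}{k}} = \binom{n}{k}^{\times}$, so that $\wt{n}_k = \binom{n}{k}^{\times}$.

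For $n \in \Z_+$ I would argue by induction, driven by Pascal's rule. The base case $n = 0$ holds because $\binom{0}{0} = 1$ and $\binom{0}{i} = 0$ for $i \geq 1$, so the product on the right of \eqref{iterate} collapses to $g_k = A_G^0(g)_k$. For the inductive step, put $h = A_G^n(g)$, so that $A_G^{n+1}(g) = A_G(h)$; then $A_G^{n+1}(g)_0 = h_0 = g_0 = \binom{n+1}{0}^{\times}(g_0)$, while for $k \geq 1$ one has $A_G^{n+1}(g)_k = h_{k-1} h_k$. Substituting the inductive formulas for $h_{k-1}$ and $h_k$ and collecting, for each $0 \leq j \leq k-1$, the two homomorphic images of $g_j$, commutativity turns the exponent into $\binom{n}{k-1-j} + \binom{n}{k-j} = \binom{n+1}{k-j}$ by Pascal's rule, while the surplus term $j = k$ contributes $\binom{n}{0}^{\times}(g_k) = g_k = \binom{n+1}{0}^{\times}(g_k)$. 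This is exactly \eqref{iterate} for $n+1$, so $\Z_+ \subseteq S$, where $S$ denotes the set of integers for which \eqref{iterate} holds for all $k$ and $g$.

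The main obstacle is the negative range: Pascal's rule propagates the identity from $n$ to $n+1$ but offers no direct grip on $A_G^{-1}$. I would circumvent this by showing that $S$ is closed under addition and that $-1 \in S$. Additivity comes from Vandermonde's identity: given $n, m \in S$, apply $A_G^n$ to $h = A_G^m(g)$, expand $A_G^n(h)_k = \prod_{i=0}^{k} \binom{n}{k-i}^{\times}(h_i)$ with $h_i = \prod_{l=0}^{i} \binom{m}{i-l}^{\times}(g_l)$, and collect the exponent of each $g_l$; reindexing by $i = l + p$ turns it into $\sum_{p=0}^{k-l} \binom{m}{p}\binom{n}{k-l-p} = \binom{n+m}{k-l}$, so $n + m \in S$. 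For $-1 \in S$, a direct telescoping check suffices: since $\binom{-1}{i} = (-1)^i$ by the defining formula, the candidate map $g \mapsto \left( \prod_{j=0}^{k} \binom{-1}{k-j}^{\times}(g_j) \right)_{k}$ sends $g$ to a sequence whose image under $A_G$ is $g$ itself, because in coordinate $0$ this is clear and in coordinate $k \geq 1$ each $g_j$ with $j \leq k-1$ acquires exponent $(-1)^{k-1-j} + (-1)^{k-j} = 0$ while $g_k$ acquires exponent $1$; as $A_G$ is bijective, this map is $A_G^{-1}$, so $-1 \in S$. Combining $\Z_+ \subseteq S$, $-1 \in S$ and additive closure gives $S = \Z$, completing the proof. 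Throughout, the only genuinely delicate points are bookkeeping: tracking the index ranges ($0 \leq j \leq k$, and the reindexing $i = l+p$ in the Vandermonde step), and keeping in mind that abelianness of $G$ together with the homomorphism property of each $i^{\times}$ is precisely what licenses rearranging the factors and adding exponents; no convergence issue arises because every product is finite.
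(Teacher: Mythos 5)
Your proof is correct and follows essentially the same route as the paper: induction via Pascal's rule for $n \in \Z_+$, a direct telescoping verification that the formula with $n = -1$ gives $A_G^{-1}$, and Vandermonde's identity to reach the remaining integers. The only (cosmetic) difference is that you package the Vandermonde computation as closure of the solution set under addition and then generate $\Z$ from $\Z_+ \cup \{-1\}$, whereas the paper runs the same computation as a downward induction from $n$ to $n-1$ by composing with $A_G^{-1}$.
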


\begin{proof}
We use the notation $A = A_G$, and $g \in G^{\Z_+}$ is arbitrary. First, we prove the formula~\eqref{iterate} for $n \in \Z_+$ by induction. For $n=0$, we have $\binom{0}{k-j} = 0$ whenever $j < k$ and $\binom{0}{0} = 1$, so~\eqref{iterate} produces the identity map in this case. Suppose then that~\eqref{iterate} holds for some $n \in \Z_+$. Clearly,
	\[A^{n+1}(g)_0 = g_0 = \binom{n+1}{0}^{\times} (g_0),
\]
and for $k \geq 1$, using Pascal's rule (P.r.) we get
\begin{align*}
A^{n+1}(g)_k &= \prod_{j=0}^{k} \binom{n}{k-j}^{\times}(A(g)_j) = \binom{n}{k}^{\times}(g_0) \prod_{j=1}^{k} \binom{n}{k-j}^{\times}(g_{j-1} g_j) \\
&= \left[ \prod_{j=0}^{k-1} \left( \binom{n}{k-j-1} + \binom{n}{k-j} \right)^{\times} (g_j) \right] g_k \stackrel{P.r.}{=} \left[ \prod_{j=0}^{k-1} \binom{n+1}{k-j}^{\times} (g_j) \right] g_k \\
&= \prod_{j=0}^{k} \binom{n+1}{k-j}^{\times}(g_j).
\end{align*}
Thus, the non-negative iterates are of the form~\eqref{iterate}.

Next, we show that the inverse of $A$ is also given by~\eqref{iterate}. Now,
	\[\binom{-1}{0}^{\times}(A(g)_0) = A(g)_0 = g_0,
\]
and for $k \geq 1$,
\begin{align*}
\prod_{j=0}^k \binom{-1}{k-j}^{\times}(A(g)_j) &= \prod_{j=0}^k (-1)^{k-j \,\times}(A(g)_j) = (-1)^{k \, \times}(g_0) \prod_{j=1}^k (-1)^{k-j \,\times}(g_{j-1} g_j) \\
&= \left[ \prod_{j = 0}^{k-1} ((-1)^{k-j} + (-1)^{k-j-1})^{\times}g_j \right] g_k = g_k,
\end{align*}
as desired.

We repeat the inductive argument for negative values of $n$. Suppose that~\eqref{iterate} is true for some $n \leq -1$. Obviously,
	\[A^{n-1}(g)_0 = g_0 = \binom{n-1}{0}^{\times} (g_0),
\]
and for $k \geq 1$, with Vandermonde's identity (V.i.) we get
\begin{align*}
A^{n-1}(g)_k &= \prod_{l=0}^k \binom{n}{k-l}^{\times} (A^{-1}(g)_l) = \prod_{l=0}^k \prod_{j=0}^l \left( \binom{n}{k-l} \binom{-1}{l-j} \right)^{\times} (g_j) \\
&= \prod_{j=0}^k \prod_{l=j}^k \left( \binom{n}{k-l} \binom{-1}{l-j} \right)^{\times} (g_j) = \prod_{j=0}^k \prod_{l=0}^{k-j} \left( \binom{n}{k-j-l} \binom{-1}{l} \right)^{\times} (g_j) \\
&= \prod_{j=0}^k \left( \sum_{l=0}^{k-j} \binom{n}{k-j-l} \binom{-1}{l} \right)^{\times} (g_j) \stackrel{V.i.}{=} \prod_{j=0}^k \binom{n-1}{k-j}^{\times} (g_j),
\end{align*}
as required.

The statement concerning the elements $\wt{n}$ follows directly from \eqref{iterate}.
\end{proof}

The formula \eqref{iterate} can be used to show that $(G^{\Z_+},A_G)$ is distal for any compact abelian group $G$: simply pick distinct points $g,h \in G^{\Z_+}$, choose $k \in \Z_+$ as the smallest integer for which $g_k \neq h_k$, and compare the $k$th coordinates of $A_G^n(g)$ and $A_G^n(h)$ for any $n \in \Z$. As a corollary, the basic systems are distal as they can be obtained from $(\T^{\Z_+},t)$ by taking subsystems and factors, both of which preserve distality. Moreover, given $m \in \N \cup \{\infty\}$ and $x_0 \in \T$, the $(m,x_0)$-system is minimal if and only if $x_0 \in \T \setminus \T_Q$. The necessity is easy: if the $(m,x_0)$-system is minimal, so is its factor the $(1,x_0)$-system, and we know that latter is minimal if and only if $x_0 \in \T \setminus \T_Q$. In the finite-dimensional case, sufficiency can be proved by degree arguments, an approach used by Furstenberg in \cite{furstenberg61} and generalised by Parry in \cite{parry}, or by combining the Weyl criterion for the uniform distribution of sequences in compact abelian groups with Theorem~\ref{sec:poly} to conclude that all positive semi-orbits are uniformly distributed, as is done by Hahn in \cite{hahn65}. The infinite-dimensional case follows once we see the $(\infty,x_0)$-system with $x_0 \in \T \setminus \T_Q$ as an inverse limit of minimal systems (see Proposition 1.5 in chapter 4 of \cite{devries}). Alternatively, we can use the distribution arguments with virtually no modifications to achieve the same result.

Next, we define the spaces $\cH$ and $\cH_m$, $m \in \N$, which we shall use throughout the work to describe various Ellis groups and study systems with quasi-discrete spectrum.

\begin{notns}
\label{sec:H}
For any $\phi \in H(\T)$ and $n \in \N$, let $\phi^{(n)}$ be the composition of $n$ instances of $\phi$. For any $m \in \N$, let $\cH_m$ be the set of those sequences $(\phi_k)_{k = 0}^m \in H(\T)^{\{0,1,\ldots,m\}}$ that satisfy the following conditions:
\begin{align*}
\phi_0 &= 1^{\times}, \tag{H.0} \label{eq:H.0}\\
\phi_k(x^{k!}) &= \prod_{j=1}^k \phi_1^{(j)}(x^{s(k,j)}) \text{ for all } x \in \T_Q, 1 \leq k \leq m. \tag{H.\emph{m}} \label{eq:H.m}
\end{align*}
(Recall that $s(k,j)$ is a Stirling number of the first kind.) Let $\cH$ be the set of all sequences $(\phi_k)_{k = 0}^{\infty} \in H(\T)^{\Z_+}$ that satisfy \eqref{eq:H.0} and also the condition
\begin{equation*}
\phi_k(x^{k!}) = \prod_{j=1}^k \phi_1^{(j)}(x^{s(k,j)}) \text{ for all } x \in \T_Q, k \in \N. \tag{H.$\infty$} \label{eq:H.inf}
\end{equation*}
For any $m \in \N$, let $\rho_m \colon \cH \to \cH_m$ be the function that maps $(\phi_k)_{k = 0}^{\infty} \in \cH$ to $(\phi_k)_{k = 0}^m \in \cH_m$.
\end{notns}

The sets $\cH_m$ and $\cH$ are closed in $H(\T)^{\{0,1,\ldots,m\}}$ and $H(\T)^{\Z_+}$, respectively, for any $m \in \N$. Note also that $\cH_1 = \{1^{\times}\} \times H(\T)$, but for $m \geq 2$, the set $\cH_m$ is a proper subset of $\{1^{\times}\} \times H(\T)^m$. 

The following lemma takes care of the technicalities encountered in describing the Ellis groups of the basic systems.

\begin{lem}
\label{sec:horbit}
\mbox{}
\begin{itemize}
\item[(i)] The space $\cH$ is the orbit closure of $\wt{0}$ in $(H(\T)^{\Z_+},T)$ or, equivalently,
	\[\cH = \ov{\set{\wt{n}}{n \in \N}}.
\]
\item[(ii)] The function $\rho_m \colon \cH \to \cH_m$ is surjective for every $m \in \N$.
\item[(iii)] For any $m \in \N$, $(\phi_k)_{k = 0}^m \in \cH_m$ and $x_0 \in \T \setminus \T_Q$, we have
	\[\set{\phi_{m+1}(x_0)}{\phi_{m+1} \in H(\T), (\phi_k)_{k = 0}^{m+1} \in \cH_{m+1}} = \T.
\]
\end{itemize}
\end{lem}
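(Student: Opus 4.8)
The plan is to fix the given sequence $(\phi_k)_{k=0}^m \in \cH_m$ and the irrational $x_0$, and to show that as $\phi_{m+1}$ ranges over all admissible extensions its value at $x_0$ sweeps out the whole circle. The guiding principle is that the defining relations \eqref{eq:H.m} constrain each $\phi_{m+1}$ only on the torsion subgroup $\T_Q$, whereas $x_0$ lies outside $\T_Q$, so that $\phi_{m+1}(x_0)$ should be essentially free.

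First I would produce a single extension to anchor the argument. Invoking part (ii), lift $(\phi_k)_{k=0}^m$ to some $(\psi_k)_{k=0}^{\infty} \in \cH$ with $\rho_m(\psi) = (\phi_k)_{k=0}^m$, and set $\phi_{m+1}^0 = \psi_{m+1}$; then $\rho_{m+1}(\psi) = (\phi_0,\ldots,\phi_m,\phi_{m+1}^0) \in \cH_{m+1}$, so the set in question is nonempty. Next I would characterise all extensions. Among the relations defining membership in $\cH_{m+1}$, those with $1 \leq k \leq m$ involve only $\phi_1,\ldots,\phi_m$ and already hold; the sole new constraint is the $k = m+1$ relation $\phi_{m+1}(x^{(m+1)!}) = \prod_{j=1}^{m+1} \phi_1^{(j)}(x^{s(m+1,j)})$ for $x \in \T_Q$. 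Since $\T_Q$ is divisible, $x \mapsto x^{(m+1)!}$ maps $\T_Q$ onto $\T_Q$; comparing an arbitrary candidate $\phi_{m+1}$ against $\phi_{m+1}^0$ then shows that $\phi_{m+1}$ is admissible if and only if $\phi_{m+1}|_{\T_Q} = \phi_{m+1}^0|_{\T_Q}$. Consequently the set of admissible $\phi_{m+1}$ is exactly $\phi_{m+1}^0 \cdot K$, where $K = \set{\psi \in H(\T)}{\psi|_{\T_Q} = 1}$, and the set of values is $\phi_{m+1}^0(x_0)\set{\psi(x_0)}{\psi \in K}$.

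The heart of the argument, and the step I expect to be the main obstacle, is establishing that $\set{\psi(x_0)}{\psi \in K} = \T$. Here I would use that $\T$ is a divisible, hence injective, abelian group. Because $x_0$ is irrational, $x_0^n \in \T_Q$ forces $n = 0$, so $\langle x_0 \rangle \cap \T_Q = \{1\}$ and every element of $\T_Q\langle x_0\rangle$ is uniquely of the form $y x_0^n$ with $y \in \T_Q$, $n \in \Z$. Given any target $c \in \T$, define a homomorphism on $\T_Q\langle x_0\rangle$ by $y x_0^n \mapsto c^n$ (trivial on $\T_Q$, sending $x_0 \mapsto c$) and extend it to some $\psi \in H(\T)$ by injectivity of $\T$; then $\psi \in K$ and $\psi(x_0) = c$. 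Thus $\set{\psi(x_0)}{\psi \in K} = \T$, and multiplying by the fixed element $\phi_{m+1}^0(x_0)$ yields the whole circle, as claimed.

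The two delicate points, both of which I would state explicitly, are that divisibility of $\T_Q$ makes the torsion relation pin down $\phi_{m+1}$ precisely on $\T_Q$ (and nowhere else), and that injectivity of $\T$ is exactly what lets one realise arbitrary prescribed values on the torsion-free direction spanned by $x_0$. Everything else is bookkeeping: consistency of the single extension is handed to us by part (ii), and the final equality follows by translating the surjectivity of $\psi \mapsto \psi(x_0)$ on $K$ through the fixed factor $\phi_{m+1}^0(x_0)$.
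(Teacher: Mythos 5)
Your proposal addresses only part (iii) of the lemma, and it does so \emph{conditionally on part (ii)}: the very first step (``Invoking part (ii), lift $(\phi_k)_{k=0}^m$ to some element of $\cH$'') takes the surjectivity of $\rho_m$ as given. Parts (i) and (ii) are the substantive content of the lemma and are nowhere argued. They are not routine: the paper proves them by building a homeomorphism $\Gamma$ from $\cH$ onto the product group $H(\T) \times \prod_{k \geq 2} \Sigma_a^B$ (using the decomposition $H(\T) \cong H(\T/\T_Q) \times H(\T_Q)$ and $H(\T/\T_Q) \cong \Sigma_a^B$ from the preliminaries), and then showing via the Weyl criterion and Theorem~\ref{sec:poly} that the sequence $(\Gamma(\wt{n}))_{n=1}^{\infty}$ is uniformly distributed, hence dense, in that group; this is where the irrationality of the leading coefficient $\theta_k/k!$ of the polynomial $\zeta n + \sum_j \theta_j \binom{n}{j}$ enters, via the $\Q$-linear independence of $\{1, b(k,1), \ldots, b(k,L)\}$. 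Part (ii) is then extracted from the same construction. Since the statement to be proved is the full lemma, this is a genuine gap, and it is the main one: as written, your argument establishes (iii) $\Leftarrow$ (ii) but leaves (i) and (ii) unproved.

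That said, your treatment of (iii), granting (i) and (ii), is correct and is in fact a cleaner route than the paper's. The paper proves (iii) by re-running the explicit $\Sigma_a^B$-construction, assuming (after an implicit re-choice of the Hamel basis) that $x_0 = \exx{b_0}$ for some $b_0 \in B$ and prescribing the coordinate $\sigma_{m+1}(b_0)$. You instead observe that condition \eqref{eq:H.m} at index $m+1$, by divisibility of $\T_Q$, pins down $\phi_{m+1}$ exactly on $\T_Q$, so the admissible extensions form the coset $\phi_{m+1}^0 K$ with $K = \set{\psi \in H(\T)}{\psi|_{\T_Q} = 1}$, and then you realise an arbitrary value at $x_0$ by defining a homomorphism on $\T_Q\langle x_0\rangle$ (using $\langle x_0 \rangle \cap \T_Q = \{1\}$ for $x_0 \notin \T_Q$) and extending it to all of $\T$ by injectivity of the divisible group $\T$. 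This avoids the basis bookkeeping entirely and makes transparent exactly where irrationality of $x_0$ is used. If you supplement it with proofs of (i) and (ii) along the lines sketched above, the combined argument would be complete.
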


\begin{proof}
Define $\cK = \ov{\set{\wt{n}}{n \in \N}}$. This set coincides with the orbit closure of $\wt{0}$ in the distal system $(H(\T)^{\Z_+},T)$. Observe that $\wt{n} \in \cH$ for any $n \in \Z$: the condition \eqref{eq:H.0} clearly holds, and for $k \geq 1$,
\begin{align*}
(\wt{n})_k(x^{k!}) &= \binom{n}{k}^{\times}(x^{k!}) = \left( \sum_{j = 1}^k s(k,j)n^j \right)^{\times} (x) = \prod_{j = 1}^k (s(k,j)n^j)^{\times} (x) \\
&= \prod_{j=1}^{k}  n^{\times \, (j)}(x^{s(k,j)}) = \prod_{j=1}^{k}  \wt{n}_1^{(j)}(x^{s(k,j)})
\end{align*}
for any $x \in \T$ and, in particular, for any $x \in \T_Q$. This proves that $\cK \subseteq \cH$.

To prove the converse, we begin by defining a direct product group $G = \prod_{k = 1}^{\infty} G_k$ where $G_1 = H(\T)$ and $G_k = \Sigma_a^B$ for all $k \geq 2$. It is compact and abelian. We wish to find a homeomorphism from $\cH$ to this group and then show that the sequence in $G$ corresponding to $(\wt{n})_{n = 1}^{\infty}$ is uniformly distributed and, a fortiori, dense. Recall that, by the Weyl criterion (Corollary 1.2 in chapter 4 of \cite{kuipers}), sequence $(x_n)$ in a compact abelian group $X$ is uniformly distributed if and only if
	\[\lim_{N \to \infty} \frac{1}{N} \sum_{n = 1}^N \chi(x_n) = 0
\]
for every non-trivial character $\chi$ of $X$.

Let $\pi_k \colon \cH \to H(\T)$ denote the projection to the $k$th coordinate for each $k \in \N$. We define functions $\Gamma_k \colon \cH \to G_k$ for $k \in \N$ as follows:
\begin{align*}
\Gamma_1 &= \pi_1, \\
\Gamma_k &= \Theta \circ (\Psi^{-1})_1 \circ \pi_k, \text{ for all } k \geq 2.
\end{align*}
(The mappings $\Theta$ and $(\Psi^{-1})_1$ were defined in the previous section.) These functions are continuous. Note also that, for any $\phi \in \cH$, $j \geq 2$ and $b \in B$,
	\[\Gamma_j(\phi)(b) = ((\Psi^{-1})_1 (\phi_j)(\exx{(b/k!)} \T_Q))_{k = 1}^{\infty} = (\phi_j(\exx{(b/k!)}))_{k = 1}^{\infty}.
\]
Let $\Gamma \colon \cH \to G$ be the continuous function $\Gamma(\phi) = (\Gamma_k(\phi))_{k=1}^{\infty} \in G$, $\phi \in \cH$. We claim that it is a homeomorphism.

To show that $\Gamma$ is injective, pick $\phi, \psi \in \cH$ and assume that $\Gamma(\phi) = \Gamma(\psi)$. Then, $\phi_0 = 1^{\times} = \psi_0$ and $\phi_1 = \psi_1$. The second identity implies that, if $x = \exx{\theta} \in \T_Q$ and $k \geq 1$, defining $y = \exx{(\theta/k!)} \in \T_Q$, we obtain
\begin{align*}
\phi_k(x) &= \phi_k(y^{k!}) = \prod_{j=1}^k \phi_1^{(j)}(y^{s(k,j)}) = \prod_{j=1}^k \psi_1^{(j)}(y^{s(k,j)}) \\
&= \psi_k(y^{k!}) = \psi_k(x),
\end{align*}
so $\phi_k(x) = \psi_k(x)$ for any $x \in \T_Q$ and $k \geq 1$. Also, since $\Gamma_k(\phi) = \Gamma_k(\psi)$ for all $k \geq 2$, we have $\phi_k(\exx{(b/n!)}) = \psi_k(\exx{(b/n!)})$ for all $b \in B_1, n \in \N$ and every $k \in \N$. Since the elements $\exx{(b/n!)}$ ($b \in B_1, n \in \N$) generate $\T$, we must have $\phi_k = \psi_k$ for every $k \in \N$. Hence, $\phi = \psi$.

To prove that $\Gamma$ is surjective, take arbitrary $\tau \in H(\T)$ and $\sigma_j \in \Sigma_a^B$ for $j \geq 2$, so $(\tau,\sigma_2,\sigma_3,\ldots) \in G$. For all $k \in \N$, we define a function $\gamma_k \in H(\T_Q)$ by the rule
\begin{equation}
\gamma_k(x^{k!}) = \prod_{j=1}^k \tau^{(j)}(x^{s(k,j)}), \label{eq:gammak}
\end{equation}
$x \in \T_Q$. Note that this condition does indeed define a function on $\T_Q$ because this group is divisible. Moreover, the function $\gamma_k$ is well-defined for any $k \in \N$. To see this, suppose that $x^{k!}, y^{k!} \in \T_Q$, $x^{k!} = y^{k!}$, $x = \exx{(p_1/q)}$ and $y = \exx{(p_2/q)}$ for some $p_1,p_2 \in \Z$ and $q \in \N$. Then, there must be some $n \in \Z$ such that
	\[\frac{p_1}{q} = \frac{p_2}{q} + \frac{n}{k!}.
\]
Let $m \in \Z$ be such that $\tau(z) = z^m$ for all $z \in \T_{k!}$. Then,
\begin{align*}
\prod_{j=1}^k \tau^{(j)}(x^{s(k,j)}) &= \prod_{j=1}^k \tau^{(j)} \left( \xp{\frac{p_1}{q}s(k,j)} \right) \\
&= \prod_{j=1}^k \tau^{(j)} \left( \xp{ \left( \frac{p_2}{q} + \frac{n}{k!} \right) s(k,j)} \right) \\
&= \left[ \prod_{j=1}^k \tau^{(j)}(y^{s(k,j)}) \right] \left[ \prod_{j=1}^k \tau^{(j)} \left( \xp{\frac{1}{k!}s(k,j)} \right) \right]^n,
\end{align*}
and the second product inside the square brackets can be calculated as follows:
\begin{align*}
\prod_{j=1}^k \tau^{(j)} \left( \xp{\frac{1}{k!}s(k,j)} \right) &= \prod_{j=1}^k \xp{\frac{1}{k!}s(k,j)m^j} = \xp{\frac{1}{k!} \sum_{j=1}^k s(k,j)m^j} \\
&= \xp{\binom{m}{k}} = 1.
\end{align*}
Hence, $\gamma_k(x) = \gamma_k(y)$. It is straightforward to check that $\gamma_k \in H(\T_Q)$ for all $k \in \N$. We use these functions to define $\phi \in E(\T)^{\Z_+}$ in the following manner:
\begin{align*}
\phi_0 &= 1^{\times}, \\
\phi_1 &= \tau, \\
\phi_k &= \Psi(\Theta^{-1}(\sigma_k),\gamma_k) \text{ for all } k \geq 2.
\end{align*} 
Now, for $k \geq 2$ and $x \in \T_Q$, we have $\phi_k(x) = \gamma_k(x)$ since $\Phi_1(x) = 1 \T_Q$ and $\Phi_2(x) = x$. Thus,
	\[\phi_k(x^{k!}) = \gamma_k(x^{k!}) = \prod_{j=1}^k \phi_1^{(j)}(x^{s(k,j)})
\]
showing that $\phi \in \cH$. Obviously, $\Gamma(\phi) = (\tau,\sigma_2,\sigma_3,\ldots)$, so $\Gamma$ is surjective. We conclude that $\Gamma$ is a homeomorphism.

We can now apply the Weyl criterion to the sequence $(\Gamma(\wt{n}))_{n=1}^{\infty}$ in $G$. Pick a non-trivial character $\chi$ of $G$. It is of the form
	\[\chi (\tau, \sigma_2, \sigma_3, \ldots) = \chi_1(\tau) \chi_2(\sigma_2) \chi_3(\sigma_3) \cdots \chi_k(\sigma_k),
\]
$(\tau, \sigma_2, \sigma_3, \ldots) \in G$, where $k \geq 2$, $\chi_1$ is a character of $H(\T)$, $\chi_j$ is a character of $\Sigma_a^B$ for $2 \leq j \leq k$, and at least one of the characters is non-trivial. We can find $z \in \T$ with $\chi_1 (\tau) = \tau (z)$ for all $\tau \in H(\T)$. Pick $\zeta \in \R$ so that $z = \exx{\zeta}$. The character group of $\Sigma_a^B$ is consists of products of characters of the form $\chi \circ P_b$ where $\chi \in \wh{\Sigma_a}$ and $P_b \colon \Sigma_a^B \to \Sigma_a$ is the projection onto the $b$-coordinate, $b \in B$. Thus, for $\sigma \in \Sigma_a^B$, we may express $\chi_j$, $2 \leq j \leq k$, as
	\[\chi_j(\sigma) = \prod_{l=1}^{L} J \left( \frac{n(j,l)}{m!} \right)(\sigma(b(j,l))_m) = \prod_{l=1}^{L} \sigma(b(j,l))_m^{n(j,l)}
\]
for some $n(j,l) \in \Z$, $m, L \in \N$ and $b(j,l) \in B$ for $1 \leq l \leq L$ and $2 \leq j \leq k$. Note that we may indeed assume $L$ to be independent of $j$ and $m$ to be independent of both $j$ and $l$ by adjusting the numbers $n(j,l)$ appropriately and choosing new numbers $b(j,l)$ if necessary together with corresponding values $n(j,l) = 0$. We may also assume that, for any $2 \leq j \leq k$, the numbers $b(j,l)$ are distinct for different values of $l$, and we put
	\[\theta_j = \sum_{l=1}^{L} \frac{n(j,l)}{m!}b(j,l).
\]
Now, for any $n \in \N$,
\begin{align*}
\chi(\Gamma(\wt{n})) &= \prod_{j=1}^k \chi_j(\Gamma_j(\wt{n})) = \binom{n}{1}^{\times}(z) \prod_{j=2}^k \prod_{l=1}^L \left( \binom{n}{j}^{\times} \left( \xp{\frac{b(j,l)}{m!}} \right) \right)^{n(j,l)} \\
&= z^n \prod_{j=2}^k \prod_{l=1}^L \xp{\frac{b(j,l)}{m!}n(j,l)\binom{n}{j}} \\
&= \xp{ \zeta n +  \sum_{j=2}^k \sum_{l=1}^L \frac{n(j,l)}{m!}b(j,l)\binom{n}{j} } \\
&= \xp{\zeta n +  \sum_{j=2}^k \theta_j \binom{n}{j}}.
\end{align*}

If at least one of the characters $\chi_j$, $2 \leq j \leq k$, is non-trivial, we may assume that $\chi_k$ is non-trivial. This means that at least one of the characters $J(n(k,l)/m!)$, $1 \leq l \leq L$, is non-trivial or, equivalently, $n(k,l) \neq 0$ for some $l$. By the linear independence of the set $\{1,b(k,1),b(k,2),\ldots,b(k,L)\}$ over the field $\Q$, we must have $\theta_k \in \R \setminus \Q$. Thus, the leading coefficient $\theta_k/k!$ of the polynomial $\zeta n +  \sum_{j=2}^k \theta_j \binom{n}{j}$ is irrational, and
	\[\lim_{N \to \infty} \frac{1}{N} \sum_{n=1}^N \chi(\Gamma(\wt{n})) = \lim_{N \to \infty} \frac{1}{N} \sum_{n=1}^N \exp \left( \zeta n +  \sum_{j=2}^k \theta_j \binom{n}{j} \right) = 0
\]
by Theorem~\ref{sec:poly}.

Suppose, then, that $\chi_j = 1$ for all $2 \leq j \leq k$. In this case, we must have $\chi_1 \neq 1$, that is, $z \neq 1$, and
	\[\lim_{N \to \infty} \frac{1}{N} \sum_{n=1}^N \chi(\Gamma(\wt{n})) = \lim_{N \to \infty} \frac{1}{N} \sum_{n=1}^N z^n = 0
\]
by Theorem~\ref{sec:poly}. This shows that $(\Gamma(\wt{n}))_{n=1}^{\infty}$ is uniformly distributed in $G$. In conclusion, $\cH = \cK$.

We now prove the second claim. Suppose that $m \in \N$ and $\phi \in \cH_m$. Let $\tau = \phi_1$, and pick $\sigma_k \in \Sigma_a^B$ for all $k \geq 2$ so that, if $m \geq 2$, $\Gamma_k(\phi_k) = \sigma_k$ for $2 \leq k \leq m$, and arbitrarily for $k > m$. Let $\psi = \Gamma^{-1}(\tau,\sigma_2,\sigma_3,\ldots) \in \cH$. We can define $\gamma_k \in H(\T_Q)$ for all $k \in \N$ with $\tau$ as in \eqref{eq:gammak}, so $\psi_k = \Psi(\Theta^{-1}(\sigma_k),\gamma_k)$ for $k \geq 2$. By assumption, $\gamma_k(x) = \phi_k(x)$ for all $x \in \T_Q$ and $2 \leq k \leq m$. Now, $\psi_1 = \tau = \phi_1$. When $2 \leq k \leq m$ (assuming that $m \geq 2$), we get $\psi_k(x^{k!}) = \gamma_k(x^{k!}) = \phi_k(x^{k!})$ for all $x \in \T_Q$ and $\psi_k(\exx{(b/n!)}) = \sigma_k(b)_n = \phi_k(\exx{(b/n!)})$ for all $b \in B$ and $n \in \N$. These identities imply that $\phi_k = \psi_k$ for $2 \leq k \leq m$, completing the proof.

The third statement follows by modifying the arguments above. Let $m \in \N$, $(\phi_k)_{k = 0}^m \in \cH_m$ and $x_0 \in \T \setminus \T_Q$ be given. For simplicity, we may assume that $x_0 = e^{i2\pi b_0}$ for some $b_0 \in B$. Let $y \in \T$ be arbitrary, and let $\theta \in \R$ be such that $y = e^{i2\pi \theta}$. Again, we choose $\tau \in H(\T)$ and $\sigma_k \in \Sigma_a^B$ for $k \in \N$ as above with the exception that $\sigma_{m+1}(b_0) = (e^{i2\pi (\theta/n!)})_{n = 1}^{\infty}$. We obtain $(\phi_k)_{k = 0}^{\infty} = \Gamma^{-1}(\tau,\sigma_2,\sigma_3,\ldots) \in \cH$. Now, $(\phi_k)_{k = 0}^{m+1} \in \cH_{m+1}$, and $\phi_{m+1}(x_0) = y$. Since $y \in \T$ was chosen arbitrarily, the claim follows.
\end{proof}

We can now describe the Ellis group of the system $(\T^{\Z_+},t)$.

\begin{thm}
\label{sec:main}
The group $E(\T^{\Z_+},t)$ is topologically isomorphic to $(\cH,\star)$ where the group operation $\star$ is defined by $\phi \star \psi = ((\phi \star \psi)_k)_{k=0}^{\infty}$,
	\[(\phi \star \psi)_k = \prod_{j=0}^k \phi_{k-j} \circ \psi_j,
\]
for all $\phi, \psi \in \cH$. In addition, the system $(E(\T^{\Z_+},t),\lambda_t)$ is isomorphic to $(\cH,T)$. The members of $E(\T^{\Z_+},t)$ are functions of the form
	\[(x_0,x_1,x_2,\ldots) \mapsto (x_0,\phi_1(x_0)x_1,\phi_2(x_0)\phi_1(x_1)x_2,\ldots) : \T^{\Z_+} \to \T^{\Z_+}
\]
where the homomorphisms $\phi_k \in H(\T)$, $k \in \N$, satisfy \eqref{eq:H.inf}.
\end{thm}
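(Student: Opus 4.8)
The plan is to realise each element of $\cH$ as a genuine self-map of $\T^{\Z_+}$ and then to show that these maps are exactly the members of $E(\T^{\Z_+},t)$. Concretely, I would define a map $\Xi \colon H(\T)^{\Z_+} \to (\T^{\Z_+})^{\T^{\Z_+}}$ by
\[
\Xi(\phi)(x)_k = \prod_{j=0}^{k} \phi_{k-j}(x_j), \qquad x \in \T^{\Z_+},\ k \in \Z_+ .
\]
When $\phi \in \cH$, so that $\phi_0 = 1^{\times}$, this is precisely the map displayed in the statement. Using the coordinatewise description of $\wt{n}$ in Proposition~\ref{sec:iter} together with \eqref{iterate}, one checks at once that $\Xi(\wt{n}) = t^n$ for every $n \in \Z$. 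The map $\Xi$ is continuous, since each coordinate $\phi \mapsto \Xi(\phi)(x)_k$ is a finite product of evaluation maps, which are continuous on $H(\T)^{\Z_+}$; and it is injective, because evaluating $\Xi(\phi)$ at the point $(x_0,1,1,\ldots)$ returns $(\phi_k(x_0))_{k}$, so $\Xi(\phi)=\Xi(\psi)$ forces $\phi_k(x_0)=\psi_k(x_0)$ for all $x_0 \in \T$ and all $k$, hence $\phi = \psi$.

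Next I would identify the image. By Lemma~\ref{sec:horbit}(i) the set $\cH = \ov{\set{\wt{n}}{n \in \N}}$ is compact, so $\Xi(\cH)$ is a compact, hence closed, subset of $(\T^{\Z_+})^{\T^{\Z_+}}$. Since every $\wt{n}$ ($n \in \Z$) lies in $\cH$ and $\Xi(\wt n)=t^n$, the set $\Xi(\cH)$ contains $\{t^n\}_{n \in \Z}$ and therefore its closure $E(\T^{\Z_+},t)$. Conversely, continuity gives $\Xi(\cH)=\Xi(\ov{\set{\wt n}{n\in\N}}) \subseteq \ov{\set{t^n}{n\in\N}} \subseteq E(\T^{\Z_+},t)$. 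Hence $\Xi$ restricts to a continuous bijection $\cH \to E(\T^{\Z_+},t)$, and as $\cH$ is compact and the target Hausdorff, it is a homeomorphism. This in particular establishes the description of the members of $E(\T^{\Z_+},t)$ as the displayed maps with $(\phi_k)$ satisfying \eqref{eq:H.inf}.

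It then remains to match the algebraic and dynamical structures. A direct computation---expanding $\Xi(\phi)\bigl(\Xi(\psi)(x)\bigr)_k$, using that each $\phi_{k-i}$ is a homomorphism to distribute it across the inner product, and reindexing the resulting double product by $m=k-j$---shows that
\[
\Xi(\phi) \circ \Xi(\psi) = \Xi(\phi \star \psi)
\]
for all $\phi,\psi \in H(\T)^{\Z_+}$, where $\phi \star \psi$ is given by the formula in the statement. For $\phi,\psi \in \cH$ the left-hand side lies in $E(\T^{\Z_+},t)=\Xi(\cH)$, so injectivity of $\Xi$ yields $\phi \star \psi \in \cH$; thus $\star$ is a well-defined operation on $\cH$ and $\Xi$ is a group isomorphism of $(\cH,\star)$ onto $E(\T^{\Z_+},t)$, the latter being a group because the system is distal. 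For the system isomorphism I would note that $\wt{1}=(1^{\times},1^{\times},0^{\times},0^{\times},\ldots)$ by Proposition~\ref{sec:iter}, and that a short calculation from the definition of $\star$ gives $\wt{1}\star\phi = T\phi$ (in $(\wt 1 \star \phi)_k$ the only surviving factors are those with $\wt 1_0=\wt 1_1=1^{\times}$). Since $\Xi(\wt 1)=t$, this gives $\Xi(T\phi)=\Xi(\wt 1)\circ\Xi(\phi)=\lambda_t(\Xi(\phi))$, so $\Xi$ is an isomorphism of $(\cH,T)$ onto $(E(\T^{\Z_+},t),\lambda_t)$.

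I expect the main obstacle to be the reindexing underlying the composition identity $\Xi(\phi)\circ\Xi(\psi)=\Xi(\phi\star\psi)$ and, more subtly, the bookkeeping needed to conclude that $\star$ genuinely maps $\cH$ into itself. It is precisely this last point that makes it worthwhile to prove injectivity of $\Xi$ on all of $H(\T)^{\Z_+}$ rather than merely on $\cH$, and to have identified $\Xi(\cH)$ with $E(\T^{\Z_+},t)$ before discussing the group law at all.
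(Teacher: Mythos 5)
Your proof is correct, and its skeleton matches the paper's: the same evaluation map (the paper calls it $\cE$, you call it $\Xi$) is shown to be a homeomorphism of $\cH$ onto $E(\T^{\Z_+},t)$, and the group and dynamical structures are then transported through it. Where you genuinely diverge is in the two sub-steps. First, your injectivity argument --- evaluating at $(x_0,1,1,\ldots)$ to read off $(\phi_k(x_0))_k$ --- is cleaner than the paper's inductive cancellation, and, crucially, it works on all of $H(\T)^{\Z_+}$, not just on $\cH$. Second, and more substantially, you exploit that global injectivity to handle the one delicate point, namely that $\star$ maps $\cH \times \cH$ into $\cH$: since the composition identity $\Xi(\phi)\circ\Xi(\psi)=\Xi(\phi\star\psi)$ also holds on all of $H(\T)^{\Z_+}$, and since $\Xi(\phi)\circ\Xi(\psi)$ lies in the semigroup $E(\T^{\Z_+},t)=\Xi(\cH)$, global injectivity forces $\phi\star\psi\in\cH$ with no computation at all. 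The paper instead verifies \eqref{eq:H.inf} for $\phi\star\psi$ by hand: it approximates $\phi$ and $\psi$ by nets $(\wt{m_\lambda})$ and $(\wt{n_\lambda})$, and uses the joint continuity of composition on $H(\T_Q)$ together with Vandermonde's identity to identify $(\phi\star\psi)_k$ on $\T_Q$ with the $k$th coordinate of a cluster point of $(\wt{m_\lambda+n_\lambda})$. Your route eliminates the net argument and the second appearance of Vandermonde (it survives only inside Proposition~\ref{sec:iter}, through which $\Xi(\wt{n})=t^n$ is obtained); what the paper's computation buys in exchange is an explicit, self-contained confirmation that the condition \eqref{eq:H.inf} is stable under the convolution-type formula, information your soft argument deduces but never exhibits. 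Both arguments rest on the same essential precaution, which you correctly flag: the image $\Xi(\cH)$ must be identified with $E(\T^{\Z_+},t)$ before the group law is discussed.
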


\begin{proof}
We define a mapping $\cE \colon \cH \to (\T^{\Z_+})^{\T^{\Z_+}}$ by
	\[\cE(\phi)(x) = (\phi_0(x_0),\phi_1(x_0)\phi_0(x_1),\phi_2(x_0)\phi_1(x_1)\phi_0(x_2),\ldots)
\]
for every $\phi \in \cH$ and $x \in \T^{\Z_+}$, that is,
	\[\cE(\phi)(x)_k = \prod_{j=0}^k \phi_{k-j}(x_j)
\]
for all $k \in \Z_+$. This is to be the desired group isomorphism from $\cH$ to $E(\T^{\Z_+},t)$. By Proposition~\ref{sec:iter}, $\cE(\wt{n}) = t^n \in E(\T^{\Z_+},t)$ for every $n \in \Z$, and $\cE$ is evidently continuous.

To check that $\cE$ is injective, suppose that $\cE(\phi) = \cE(\psi)$ for some $\phi, \psi \in \cH$. This entails that
\begin{equation}
\label{eq:r}
\prod_{j=0}^k \phi_{k-j}(x_j) = \prod_{j=0}^k \psi_{k-j}(x_j)
\end{equation}
for every $k \in \N$ and $x_j \in \T$, $0 \leq j \leq k$. For $k = 0$, the identity~\eqref{eq:r} reduces to $x_0 = x_0$, as $\phi_0 = 1^{\times} =\psi_0$. We proceed by induction. Suppose that $\phi_j = \psi_j$ for $0 \leq j \leq k$ for some $k \in \N$. Replacing $k$ by $k+1$ in~\eqref{eq:r} and dividing away the terms $\phi_{k+1-j}(x_j) = \psi_{k+1-j}(x_j)$, $1 \leq j \leq k+1$, we are left with $\phi_{k+1}(x_0) = \psi_{k+1}(x_0)$ for all $x_0 \in \T$. Therefore, $\phi_{k+1} = \psi_{k+1}$, and $\phi = \psi$.

By the compactness of $\cH$, the mapping $\cE$ is a homeomorphism from $\cH$ to $\cE(\cH)$. Observe that, since $(\T^{\Z_+},t)$ is distal, the positive semi-orbits of $(E(\T^{\Z_+}),\lambda_t)$ are dense, so $E(\T^{\Z_+})$ is the closure of the set $\set{t^n}{n \in \N}$. Since $\cH$ is the closure of the set $\set{\wt{n}}{n \in \N}$  (Lemma~\ref{sec:horbit}), we must have $\cE(\cH) = E(\T^{\Z_+},t)$.

We can bring the group structure from $E(\T^{\Z_+},t)$ to $\cH$ via $\cE$. That is, we may define a group operation $\star \colon \cH \times \cH \to \cH$ by $\phi \star \psi = \cE^{-1}(\cE(\phi) \cE(\psi))$ for all $\phi, \psi \in \cH$. This is the unique right topological group operation on $\cH$ for which $\cE$ is a topological isomorphism. We must find an explicit formula for $\star$. Let $\phi, \psi \in \cH$. We compute $(\cE(\phi) \cE(\psi))(x)_k$ for arbitrary $x \in \T^{\Z_+}$ and $k \in \N$:
\begin{align*}
(\cE(\phi) \cE(\psi))(x)_k &= \prod_{l=0}^k \phi_{k-l}(\cE(\psi)(x)_l) = \prod_{l=0}^k \phi_{k-l} \left( \prod_{j=0}^l \psi_{l-j}(x_j) \right) \\
&= \prod_{l=0}^k \prod_{j=0}^l \phi_{k-l} \circ \psi_{l-j}(x_j) = \prod_{j=0}^k \prod_{l=j}^k  \phi_{k-l} \circ \psi_{l-j}(x_j) \\
&= \prod_{j=0}^k \prod_{l=0}^{k-j}  \phi_{k-j-l} \circ \psi_l(x_j) = \prod_{j=0}^k \xi_{k-j}(x_j),
\end{align*}
where $\xi_j(x) = \prod_{l=0}^j \phi_{j-l} \circ \psi_l (x)$ for all $x \in \T$ and $j \in \Z_+$. Let $\xi = (\xi_k)_{k=0}^{\infty} \in E(\T)^{\Z_+}$. To make sure that $\phi \star \psi = \xi$, we only need to check that $\xi \in \cH$. Suppose that $\phi = \lim_\lambda \wt{m_\lambda}$ and $\psi = \lim_\lambda \wt{n_\lambda}$ for some nets $(m_\lambda)$ and $(n_\lambda)$ in $\Z$. Clearly, $\xi_0 = 1^{\times}$. For any $k \in \N$ and $x \in \T_Q$, the joint continuity of composition in $H(\T_Q)$ and Vandermonde's identity (V.i.) yield
\begin{align*}
\xi_k(x) &= \prod_{j=0}^k \phi_{k-j} \circ \psi_j (x) = \prod_{j=0}^k \left( \lim_\lambda \binom{m_\lambda}{k-j}^{\times} \right) \circ \left( \lim_{\lambda^{\prime}} \binom{n_{\lambda^{\prime}}}{j}^{\times} \right) (x) \\
&= \lim_\lambda \prod_{j=0}^k \left( \binom{m_\lambda}{k-j}\binom{n_\lambda}{j} \right)^{\times}(x) = \lim_\lambda \left( \sum_{j=0}^k \binom{m_\lambda}{k-j}\binom{n_\lambda}{n} \right)^{\times}(x) \\
&\stackrel{V.i.}{=} \lim_\lambda \binom{m_\lambda + n_\lambda}{k}^{\times}(x) = \lim_\lambda \left( \wt{m_\lambda + n_\lambda} \right)_k(x).
\end{align*}
If $\delta \in \cH$ is some cluster point of the net $(\wt{m_\lambda + n_\lambda})$, then $\xi_k(x) = \delta_k(x)$ for every $k \in \Z_+$ and $x \in \T_Q$. Hence, $\xi$ satisfies \eqref{eq:H.inf}, and $\xi \in \cH$.

Lastly, the mapping $\cE$ is also an isomorphism of dynamical systems since $\cE(ta) = \cE(t) \star \cE(a) = T(\cE(a))$ for all $a \in E(\T^{\Z_+},t)$.
\end{proof}

Next, we find the topological centre of $E(\T^{\Z_+},t)$. The reader should remember that topological centres of various Ellis groups were already studied in \cite{jabnam10}, but our approach is less arduous thanks to the explicit description of $\cH$.

\begin{thm}
\label{sec:topcentr}
The topological centre of $E(\T^{\Z_+},t)$ is $\set{t^n}{n \in \Z}$ or, equivalently, $\Lambda(\cH) = \set{\wt{n}}{n \in \Z}$.
\end{thm}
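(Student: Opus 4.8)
The plan is to carry the statement over to $(\cH,\star)$ through the topological isomorphism $\cE$ of Theorem~\ref{sec:main}, under which $\wt{n}$ corresponds to $t^{n}$. Since a topological isomorphism matches topological centres, it suffices to show that $\Lambda(\cH) = \set{\wt{n}}{n \in \Z}$. One inclusion is free: we already know that $\set{t^{n}}{n \in \Z} \subseteq \Lambda(E(\T^{\Z_+},t))$, so $\set{\wt{n}}{n \in \Z} \subseteq \Lambda(\cH)$. For the reverse inclusion I would fix $\phi \in \Lambda(\cH)$, i.e. such that $\lambda_{\phi} \colon \psi \mapsto \phi \star \psi$ is continuous, and aim to prove that every coordinate $\phi_{k}$ is a \emph{continuous} endomorphism of $\T$. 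The relevant fact about $H(\T)$ is that left composition $\eta \mapsto \chi \circ \eta$ is continuous if and only if $\chi$ is continuous, i.e. $\chi = m^{\times}$ for some $m \in \Z$. The nontrivial direction follows by fixing an irrational $y \in \T$: the evaluation $e_{y} \colon H(\T) \to \T$, $\eta \mapsto \eta(y)$, is a continuous surjection (surjectivity was noted in the preliminaries) from a compact space, hence a quotient map, and $\chi \circ e_{y}$, being $\eta \mapsto (\chi \circ \eta)(y)$, is continuous whenever $\eta \mapsto \chi \circ \eta$ is, forcing $\chi$ to be continuous.

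The heart of the argument is an induction on $k$ showing that continuity of $\lambda_{\phi}$ forces each $\phi_{k}$ to be continuous. Recall that the $k$-th coordinate of $\phi \star \psi$ is the product $\prod_{j=0}^{k}\phi_{k-j}\circ\psi_{j}$, taken in the \emph{topological group} $H(\T)$, so cancellation by inverses is legitimate. To test continuity of $\eta \mapsto \phi_{k}\circ\eta$ I would take a net $\eta_{\lambda}\conv{\lambda}\eta$ in $H(\T)$ and lift it into $\cH$ using the homeomorphism $\Gamma$ of Lemma~\ref{sec:horbit}: fixing an arbitrary tuple $(\sigma_{j})_{j \geq 2}$ of $\Sigma_a^B$-parts, set $\psi^{(\lambda)} = \Gamma^{-1}(\eta_{\lambda},\sigma_{2},\sigma_{3},\ldots)$, which converges to $\psi = \Gamma^{-1}(\eta,\sigma_{2},\sigma_{3},\ldots)$ and satisfies $\psi^{(\lambda)}_{1} = \eta_{\lambda}$ and $\psi^{(\lambda)}_{j}\conv{\lambda}\psi_{j}$ for every $j$. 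Continuity of $\lambda_{\phi}$ then gives $(\phi\star\psi^{(\lambda)})_{k+1}\conv{\lambda}(\phi\star\psi)_{k+1}$. In this $(k+1)$-st coordinate the factor indexed by $j=0$ is the constant $\phi_{k+1}$, the factor indexed by $j=k+1$ equals $\psi^{(\lambda)}_{k+1}\conv{\lambda}\psi_{k+1}$, and the factors with $2 \leq j \leq k$ are $\phi_{k+1-j}\circ\psi^{(\lambda)}_{j}$ with $1 \leq k+1-j \leq k-1$, which converge by the inductive hypothesis that $\phi_{1},\ldots,\phi_{k-1}$ are continuous. Dividing these convergent factors away isolates the remaining factor $\phi_{k}\circ\psi^{(\lambda)}_{1} = \phi_{k}\circ\eta_{\lambda}$ and shows it converges to $\phi_{k}\circ\eta$. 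Hence $\eta \mapsto \phi_{k}\circ\eta$ is continuous and $\phi_{k} = m_{k}^{\times}$; the base case $k=1$ uses the second coordinate, where no hypothesis is needed.

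Finally, continuity of $\phi_{1}$ gives $\phi_{1} = n^{\times}$ for some $n \in \Z$, whence $\phi_{1}^{(j)} = (n^{j})^{\times}$, and condition~\eqref{eq:H.inf} yields, for $x \in \T_Q$,
	\[\phi_{k}(x^{k!}) = \prod_{j=1}^{k}(x^{s(k,j)})^{n^{j}} = x^{\sum_{j=1}^{k}s(k,j)n^{j}} = (x^{k!})^{\binom{n}{k}}.\]
Thus $\phi_{k}$ agrees with $\binom{n}{k}^{\times}$ on the divisible group $\T_Q$; being continuous and equal there on a dense set, $\phi_{k} = \binom{n}{k}^{\times}$ for all $k$, that is, $\phi = \wt{n}$. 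I expect the main obstacle to be exactly the isolation step in the second paragraph: singling out the one composition factor $\phi_{k}\circ\psi_{1}$ in coordinate $k+1$ while keeping every other factor convergent. This is what dictates the whole strategy — the continuous $\Gamma^{-1}$-lift with prescribed first coordinate supplies convergence of the $\psi^{(\lambda)}_{j}$, the induction supplies continuity (hence convergence) of the lower-index composition factors, and joint continuity of multiplication and inversion in $H(\T)$ makes the cancellation valid.
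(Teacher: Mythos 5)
Your proof is correct and follows essentially the same route as the paper's: transfer the problem to $(\cH,\star)$ via $\cE$, run an induction that isolates the single factor $\phi_k \circ \psi_1$ in a higher coordinate of $\phi \star \psi$ by cancelling all the other (convergent) factors, force continuity of each $\phi_k$ through evaluation at an irrational point of $\T$, and finally use \eqref{eq:H.inf} to conclude $\phi = \wt{n}$. Your two deviations are local simplifications rather than a different strategy --- you derive continuity of $\phi_k$ from the closed-quotient-map property of $e_y$ and produce convergent lifts directly via the homeomorphism $\Gamma^{-1}$ with prescribed first coordinate, where the paper lifts by Lemma~\ref{sec:horbit}, passes to subnets, and argues by contradiction; and you identify $\phi_k = \binom{n}{k}^{\times}$ by continuity plus density of $\T_Q$, where the paper first writes $\phi_k = p^{\times}$ and pins down the integer $p$ by a rational-coefficient computation --- both of which are sound and, if anything, slightly cleaner.
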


\begin{proof}
We must show that $\Lambda(\cH) \subseteq \set{\wt{n}}{n \in \Z}$, so pick $\phi \in \Lambda(\cH)$. If $(\psi_\lambda)$ is a net in $\cH$ converging to some $\psi$, we must have $\phi \star \psi_\lambda \conv{\lambda} \phi \star \psi$, that is,
\begin{equation}
\label{centr}
\prod_{j=0}^k \phi_{k-j} \circ (\psi_\lambda)_j(x) \conv{\lambda} \prod_{j=0}^k \phi_{k-j} \circ \psi_j(x) 
\end{equation}
for every $k \in \Z_+$ and $x \in \T$. For $k=0$, both sides are equal to $x$ for all $\lambda$ by \eqref{eq:H.0}. For $k=1$, we get $\phi_{1}(x) (\psi_\lambda)_1(x) \conv{\lambda} \phi_{1}(x) \psi_1(x)$, $x \in \T$, which is true whenever $\psi_\lambda \conv{\lambda} \psi$ in $\cH$. For $k=2$, the condition reduces to $\phi_1((\psi_\lambda)_1(x)) \conv{\lambda} \phi_1(\psi_1(x))$ for all $x \in \T$ and whenever $\psi_\lambda \conv{\lambda} \psi$ in $\cH$. This condition is strong enough to guarantee that $\phi_1$ is continuous. We argue by contradiction. Suppose that $\phi_1$ is not continuous, and take a net $(x_\lambda)$ in $\T$ converging to some $x \in \T$ such that $\phi_1 (x_\lambda)$ does not converge to $\phi_1(x)$. By compactness, we may assume that $\phi_1(x_\lambda) \conv{\lambda} z$ for some $z \in \T$, $z \neq \phi_1(x)$. Let $y \in \T \setminus \T_Q$ be arbitrary. Pick $\psi^{\prime}_\lambda \in H(\T)$ for every $\lambda$ so that $\psi^{\prime}_\lambda(y) = x_\lambda$. According to Lemma~\ref{sec:horbit}, we can find $\psi_\lambda \in \cH$ so that $(\psi_\lambda)_1 = \psi^{\prime}_\lambda$ for every $\lambda$. Passing to a subnet if necessary, we may assume that $(\psi_\lambda)$ converges to some $\psi \in \cH$. Then, $(\psi_\lambda)_1(y) = x_\lambda$ for every $\lambda$, but
	\[\phi_1 (x_\lambda) = \phi_1 ((\psi_\lambda)_1(y)) \conv{\lambda} \phi_1(\psi_1(y)) = \phi_1(x),
\]
contradicting the fact that $\phi_1(x_\lambda)$ converges to $z \neq \phi_1(x)$. Hence, $\phi_1$ is continuous. This means that $\phi_1 = m^{\times}$ for some $m \in \Z$.

We can show that $\phi = \wt{m}$ by induction. The identities $\phi_0 = \wt{m}_0$ and $\phi_1 = \wt{m}_1$ have already been established. Suppose that $\phi_j = \wt{m}_j$ for $0 \leq j \leq k$ for some $k \in \N$. We claim that $\phi_{k+1} = \binom{m}{k+1}^{\times}$. By replacing $k$ with $k+2$ in~\eqref{centr} and dividing away the terms $\phi_{k+2}(x)$ and the terms involving $\phi_j$ for $0 \leq j \leq k$ from both sides, we get
	\[\phi_{k+1}((\psi_\lambda)_1(x)) \stackrel{\lambda}{\longrightarrow} \phi_{k+1}(\psi_1(x))
\]
for all $x \in \T$ and all nets $(\psi_\lambda)$ converging to $\psi$ in $\cH$. Again, this means that $\phi_{k+1}$ is continuous, so $\phi_{k+1} = p^{\times}$ for some $p \in \Z$. The number $p$ must be such that, for any $x \in \T_Q$,
\begin{align*}
x^{(k+1)!p} &= \phi_{k+1}(x^{(k+1)!}) = \prod_{j=1}^{k+1} \phi_1^{(j)}(x^{s(k+1,j)}) = \prod_{j=1}^{k+1} (s(k+1,j)m^j)^{\times}(x) \\
&= \left( \sum_{j=1}^{k+1} s(k+1,j) m^j \right)^{\times}(x) = \left( (k+1)!\binom{m}{k+1} \right)^{\times}(x).
\end{align*}
We infer that, for any $\theta \in \Q$, there is some $n_\theta \in \Z$ such that
	\[\theta (k+1)! p = \theta (k+1)!\binom{m}{k+1} + n_\theta.
\]
If $q = p - \binom{m}{k+1} \neq 0$, then by choosing $\theta = 1/(2(k+1)!q)$ in the identity above we get $n_\theta = 1/2$, which is not possible. Hence, $p = \binom{m}{k+1}$, as desired. This shows that $\phi = \wt{m}$, and the proof is complete.
\end{proof}

The Ellis groups of the basic systems can be derived from $E(\T^{\Z_+},t)$. We start with the infinite-dimensional case:

\begin{thm}
\label{sec:infdim}
Let $(\T^{\N},s)$ be the $(\infty,x_0)$-system for some $x_0 \in \T$. The group $E(\T^{\N},s)$ is topologically isomorphic to $(\cH,\star)$, and $\Lambda(E(\T^{\N},s)) = \set{s^n}{n \in \Z}$. The members of $E(\T^{\N},s)$ are mappings of the form
	\[(x_1,x_2,\ldots) \mapsto (\phi_1(x_0)x_1,\phi_2(x_0)\phi_1(x_1)x_2,\ldots) : \T^{\N} \to \T^{\N}
\]
where the sequence $(\phi_k)_{k = 1}^{\infty} \in H(\T)^{\N}$ satisfies the condition \eqref{eq:H.inf}.
\end{thm}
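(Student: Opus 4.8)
The plan is to realise the $(\infty,x_0)$-system as a subsystem of $(\T^{\Z_+},t)$ and then transport the description of $E(\T^{\Z_+},t)$ obtained in Theorem~\ref{sec:main}. Since $t$ fixes the $0$th coordinate, the slice $Y = \{x_0\} \times \T^{\N} \subseteq \T^{\Z_+}$ is a non-empty, closed, $t$-invariant set, and the coordinate-dropping map $(x_0,x_1,x_2,\ldots) \mapsto (x_1,x_2,\ldots)$ is an isomorphism of $(Y,t)$ onto $(\T^{\N},s)$; indeed $t(x_0,x_1,x_2,\ldots) = (x_0,x_0x_1,x_1x_2,\ldots)$ projects to $(x_0x_1,x_1x_2,\ldots) = s(x_1,x_2,\ldots)$. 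Moreover every element of $E(\T^{\Z_+},t)$ fixes the $0$th coordinate, since $\phi_0 = 1^{\times}$, so $Y$ is invariant under the whole Ellis group. I would therefore work with the composite $\cE' \colon \cH \to (\T^{\N})^{\T^{\N}}$ obtained by restricting each $\cE(\phi)$ to $Y$ and dropping the first coordinate, so that
\[
\cE'(\phi)(x)_k = \prod_{j=0}^k \phi_{k-j}(\wt{x}_j) = \phi_k(x_0) \prod_{j=1}^{k} \phi_{k-j}(x_j), \quad \wt{x} = (x_0,x_1,x_2,\ldots),\ k \geq 1,
\]
which is precisely the claimed form of a member of $E(\T^{\N},s)$.

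Next I would check that $\cE'$ is a continuous homomorphism onto $E(\T^{\N},s)$. Continuity is clear, and $\cE'(\wt{n}) = s^n$ by Proposition~\ref{sec:iter}. Since $(\T^{\N},s)$ is distal (a subsystem of the distal $(\T^{\Z_+},t)$), its Ellis group is the closure of $\{s^n : n \in \N\}$; as $\cH = \ov{\{\wt{n} : n \in \N\}}$ by Lemma~\ref{sec:horbit} and $\cH$ is compact, we get $\cE'(\cH) = \ov{\cE'(\{\wt{n} : n \in \N\})} = E(\T^{\N},s)$. That $\cE'$ intertwines $\star$ with composition follows from the same coordinatewise computation as in the proof of Theorem~\ref{sec:main}, now carried out with the $0$th coordinate frozen at $x_0$; equivalently, restriction to the invariant set $Y$ is a semigroup homomorphism, and $\cE'$ is $\cE$ followed by this restriction.

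The crux is injectivity of $\cE'$. Suppose $\cE'(\phi) = \cE'(\psi)$, i.e.
\[
\phi_k(x_0)\,\phi_{k-1}(x_1) \cdots \phi_1(x_{k-1})\,x_k = \psi_k(x_0)\,\psi_{k-1}(x_1) \cdots \psi_1(x_{k-1})\,x_k
\]
for every $k \geq 1$ and all $x_1,\ldots,x_k \in \T$. The point is that although $x_0$ is frozen, the remaining variables range over all of $\T$, and this already determines each $\phi_j$ as a function on $\T$: arguing by induction on $k$, once $\phi_1 = \psi_1, \ldots, \phi_{k-2} = \psi_{k-2}$ are known, the $k$th identity cancels down to $\phi_k(x_0)\phi_{k-1}(x_1) = \psi_k(x_0)\psi_{k-1}(x_1)$ for all $x_1 \in \T$; setting $x_1 = 1$ gives $\phi_k(x_0) = \psi_k(x_0)$, and dividing then yields $\phi_{k-1} = \psi_{k-1}$ on all of $\T$. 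Letting $k$ run through $\N$ forces $\phi = \psi$. Notably this uses nothing about $x_0$, so the conclusion holds for every $x_0 \in \T$.

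Finally, $\cE'$ is a continuous bijective homomorphism from the compact space $\cH$ onto the Hausdorff space $E(\T^{\N},s)$, hence a topological isomorphism; composing with $\cE^{-1}$ identifies $E(\T^{\N},s)$ with $(\cH,\star)$. Because $\cE'(\wt{n}) = s^n$ for all $n \in \Z$, this isomorphism carries $\Lambda(\cH) = \{\wt{n} : n \in \Z\}$ from Theorem~\ref{sec:topcentr} onto $\Lambda(E(\T^{\N},s))$, giving $\Lambda(E(\T^{\N},s)) = \{s^n : n \in \Z\}$. The main obstacle is the injectivity step, and the key realisation there is that freezing the $0$th coordinate does no harm, because the free coordinates $x_1,\ldots,x_k$ by themselves pin down each homomorphism $\phi_j$ completely.
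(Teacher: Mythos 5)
Your proposal is correct and follows essentially the same route as the paper: both identify $(\T^{\N},s)$ with the slice $Y = \set{y \in \T^{\Z_+}}{y_0 = x_0}$ of $(\T^{\Z_+},t)$, transport the description from Theorem~\ref{sec:main} via restriction to $Y$, and prove injectivity by the same inductive cancellation (freeze the $0$th coordinate, set the next free variable to $1$). The only cosmetic difference is that you bundle the paper's two maps $\kappa$ (restriction) and $\wt{\pi}$ (coordinate-dropping) into the single composite $\cE^{\prime}$.
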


\begin{proof}
First, we identify $(\T^{\N},s)$ with a subsystem of $(\T^{\Z_+},t)$. Let $Y \subseteq \T^{\Z_+}$ be the set
	\[Y = \set{y \in \T^{\Z_+}}{y_0 = x_0}.
\]
It is clearly a non-empty, closed set with $t(Y) = Y$, and we obtain a subsystem $(Y,t)$ of $(\T^{\Z_+},t)$. Let $\pi \colon Y \to \T^{\N}$ be the mapping defined by $\pi(y)_k = y_k$ for all $y \in Y$ and $k \in \N$. It is clearly an isomorphism. Hence, the induced homomorphism $\wt{\pi} \colon E(Y,t) \to E(\T^{\N},s)$ is also an isomorphism.

Let $\kappa \colon E(\T^{\Z_+},t) \to E(Y,t)$ be the mapping $\kappa(a) = a|_Y$, $a \in E(\T^{\Z_+},t)$. It is obviously a continuous, surjective homomorphism of semigroups. We claim that it is injective. To see this, pick $a,b \in E(T^{\Z_+},t)$, and suppose that $\kappa(a) = \kappa(b)$. Let $\phi, \psi \in \cH$ be such that $\cE(\phi) = a$ and $\cE(\psi) = b$. In other words,
	\[\prod_{j=0}^k \phi_{k-j}(y_j) = \cE(\phi)(y)_k = \cE(\psi)(y)_k = \prod_{j=0}^k \psi_{k-j}(y_j)
\]
for all $k \in \N$ and $y \in Y$. We have $\phi_0 = 1^{\times} = \psi_0$. If $\phi_j = \psi_j$ for all $0 \leq j \leq k$ for some $k \in \Z_+$, then
\begin{align*}
\phi_{k+2}(x_0) \phi_{k+1}(y_1) \prod_{j=2}^{k+2} \phi_{k+2-j}(y_j) &= \cE(\phi)(y)_{k+2} = \cE(\psi)(y)_{k+2} \\
&= \psi_{k+2}(x_0) \psi_{k+1}(y_1) \prod_{j=2}^{k+2} \psi_{k+2-j}(y_j) \\
&= \psi_{k+2}(x_0) \psi_{k+1}(y_1) \prod_{j=2}^{k+2} \phi_{k+2-j}(y_j)
\end{align*}
for all $y \in Y$. Thus, $\phi_{k+2}(x_0) \phi_{k+1}(y_1) = \psi_{k+2}(x_0) \psi_{k+1}(y_1)$ for all $y_1 \in \T$. Choosing $y_1 = 1$, we get $\phi_{k+2}(x_0) = \psi_{k+2}(x_0)$, so $\phi_{k+1} = \psi_{k+1}$. By induction, $\phi = \psi$, as desired.

Now, $\wt{\pi} \circ \kappa \colon E(\T^{\Z_+},t) \to E(\T^{\N},s)$ is a topological isomorphism of semigroups. By Theorem~\ref{sec:main}, $E(\T^{\N},s)$ is topologically isomorphic to $(\cH,\star)$. It also follows from the same theorem that $E(\T^{\N},s)$ consists of those mappings that are of the stated form.
\end{proof}

Note that the choice of $x_0$ does not affect the structure of the Ellis group. The finite-dimensional case is different in this regard. In the theorem below, the key parts concerning the Ellis groups appear also in \cite{pikula10}, and the topological centre in case (iii) is treated in \cite{jabnam10}. The proof is based on the infinite-dimensional case, so we sketch it for its unifying nature.

\begin{thm}
\label{sec:findim}
Let $m \in \N$ and $x_0 \in \T$, and let $(\T^m,r)$ be the $(m,x_0)$-system.
\begin{itemize}
\item[(i)] If $m = 1$, then $E(\T,r)$ is topologically isomorphic to the subgroup $G = \ov{\set{x_0^n}{n \in \Z}}$ of $\T$, and $\Lambda(E(\T,r)) = E(\T,r)$. The members of $E(\T,r)$ are translations by the members of $G$.
\item[(ii)] If $m \geq 2$ and $x_0 \in \T_Q$, then the group $E(\T^m,r)$ is topologically isomorphic to $(\cH_{m-1},\star)$ where the group operation $\star$ on $\cH_{m-1}$ is given by $\phi \star \psi = ((\phi \star \psi)_k)_{k=0}^{m-1}$,
	\[(\phi \star \psi)_k = \prod_{j=0}^k \phi_{k-j} \circ \psi_j
\]
for all $\phi, \psi \in \cH_{m-1}$. In addition, $\Lambda(\cH_{m-1}) = \set{\rho_{m-1}(\wt{n})}{n \in \Z}$.
\item[(iii)] If $m \geq 2$ and $x_0 \in \T \setminus \T_Q$, then the group $E(\T^m,r)$ is topologically isomorphic to $(\cH_{m-1} \times \T,\ast)$ where the group operation $\ast$ on $\cH_{m-1} \times \T$ is defined by
	\[(\phi,x) \ast (\psi,y) = (\phi \star \psi, xy \prod_{k=1}^{m-1} \phi_{m-k} \circ \psi_k (x_0)),
\]
$(\phi,x), (\psi,y) \in \cH_{m-1} \times \T$. In addition,
	\[\Lambda(\cH_{m-1} \times \T) = \set{(\rho_{m-1}(\wt{n}),x)}{n \in \Z, x \in \T}.
\]
\end{itemize}
\end{thm}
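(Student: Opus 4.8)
The plan is to realise each $(m,x_0)$-system as a factor of the $(\infty,x_0)$-system and to transfer the description of $\cH$. Let $p_m \colon \T^{\N} \to \T^m$ be the projection onto the first $m$ coordinates. Since $s(x)_1 = x_0 x_1$ and $s(x)_k = x_{k-1}x_k$ for $2 \le k \le m$, the map $p_m$ is a homomorphism from the $(\infty,x_0)$-system onto the $(m,x_0)$-system, so it induces a continuous surjective homomorphism $\wt{p_m} \colon E(\T^{\N},s) \to E(\T^m,r)$. Identifying $E(\T^{\N},s)$ with $(\cH,\star)$ via Theorem~\ref{sec:infdim}, I would use the coordinate formula of Theorem~\ref{sec:main}: the $k$th coordinate of the image of $(x_1,x_2,\ldots)$ is $\prod_{j=0}^{k}\phi_{k-j}(x_j)$ with $x_0$ fixed. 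This depends only on $\phi_0,\ldots,\phi_k$, and in the top coordinate $k=m$ the factor $\phi_m$ enters solely through the value $\phi_m(x_0)$. An induction up the coordinates, as in Theorem~\ref{sec:main}, then shows that $\wt{p_m}(\phi)=\wt{p_m}(\psi)$ if and only if $\rho_{m-1}(\phi)=\rho_{m-1}(\psi)$ and $\phi_m(x_0)=\psi_m(x_0)$.

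Case (i) I would treat directly: here $r$ is the rotation by $x_0$, and $E(\T,r)$ is the closure of the set of rotations by $x_0^n$, which is precisely the set of (continuous) rotations by the elements of $G=\ov{\set{x_0^n}{n\in\Z}}$. For (ii), with $x_0\in\T_Q$, divisibility of $\T_Q$ lets me write $x_0=z^{m!}$ with $z\in\T_Q$, and \eqref{eq:H.inf} gives $\phi_m(x_0)=\prod_{j=1}^{m}\phi_1^{(j)}(z^{s(m,j)})$; thus $\phi_m(x_0)$ is already determined by $\rho_{m-1}(\phi)$, and the fibre relation collapses to $\rho_{m-1}(\phi)=\rho_{m-1}(\psi)$. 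Since $\rho_{m-1}$ is surjective by Lemma~\ref{sec:horbit}(ii), it descends to a topological isomorphism $E(\T^m,r)\cong(\cH_{m-1},\star)$, the operation transferring verbatim because $(\phi\star\psi)_k$ for $k\le m-1$ depends only on $\rho_{m-1}(\phi)$ and $\rho_{m-1}(\psi)$. For (iii), with $x_0\notin\T_Q$, Lemma~\ref{sec:horbit}(iii) (applied with $m-1$ in place of $m$) shows that $\phi\mapsto(\rho_{m-1}(\phi),\phi_m(x_0))$ surjects onto $\cH_{m-1}\times\T$; being continuous and constant on the fibres of the quotient map $\wt{p_m}$, it descends to a continuous bijection $E(\T^m,r)\to\cH_{m-1}\times\T$, hence a homeomorphism by compactness. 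To obtain $\ast$ I would expand $(\phi\star\psi)_m(x_0)=\prod_{j=0}^{m}\phi_{m-j}\circ\psi_j(x_0)$; the terms $j=0$ and $j=m$ contribute $\phi_m(x_0)$ and $\psi_m(x_0)$ (as $\phi_0=\psi_0=1^{\times}$), leaving exactly the middle product $\prod_{k=1}^{m-1}\phi_{m-k}\circ\psi_k(x_0)$.

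The determination of the topological centres is the last and, I expect, the main obstacle; I would model it on the proof of Theorem~\ref{sec:topcentr}. That the iterates $\rho_{m-1}(\wt n)$ lie in the centre is immediate, and in case (iii) the whole of $\set{(\rho_{m-1}(\wt{n}),x)}{x\in\T}$ does too, because with $\phi=\rho_{m-1}(\wt n)$ the coordinates $\phi_j=\binom{n}{j}^{\times}$ are continuous, so both components of $\lambda_{(\phi,x)}$ are continuous regardless of $x$. For the reverse inclusion the crux, exactly as in Theorem~\ref{sec:topcentr}, is that composition in $H(\T)$ is not jointly continuous off $\T_Q$: isolating a composition term $\phi_i\circ\psi_j$ with $i,j\ge 1$ forces the relevant $\phi_i$ to be continuous, hence $\phi_i=n^{\times}$, after which an induction up the coordinates pins down $\phi=\rho_{m-1}(\wt n)$. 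In case (iii) such a term is present already in the $\T$-component of $\ast$ (recall $x_0\notin\T_Q$), so the argument runs for every $m\ge 2$; in case (ii) the first such term appears in the coordinate $k=2$ of $\star$, so the argument runs for $m\ge 3$, and the instance $m=2$, where $(\cH_1,\star)$ carries only the coordinate $k=1$, I would settle directly from the explicit form of the group obtained above.
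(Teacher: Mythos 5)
Your identification of the groups is correct and is precisely the paper's own route: the paper also projects from the $(\infty,x_0)$-system, uses the induced homomorphism together with Lemma~\ref{sec:horbit} and the constraint \eqref{eq:H.m} to see that a member of $E(\T^m,r)$ is determined by $\rho_{m-1}(\phi)$ together with $\phi_m(x_0)$ (redundant when $x_0 \in \T_Q$, a free parameter in $\T$ when $x_0 \notin \T_Q$), and reads off $\star$ and $\ast$ exactly as you do. Your case (i) and your treatment of the centre in case (iii) are also fine.

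The genuine gap is the topological centre in case (ii), and it is not one you can close by running the induction harder. In $(\cH_{m-1},\star)$ the top coordinate $\phi_{m-1}$ occurs in $(\phi \star \psi)_k = \prod_{j=0}^k \phi_{k-j} \circ \psi_j$ only for $k = m-1$, $j = 0$, where $\psi_0 = 1^{\times}$ is the identity map; that factor is the constant $\phi_{m-1}$, independent of $\psi$. So the Namioka-type forcing behind Theorem~\ref{sec:topcentr} applies only to $\phi_1,\ldots,\phi_{m-2}$: your induction stops there, and continuity of $\lambda_\phi$ imposes no condition at all on $\phi_{m-1}$. Conversely, if $\phi_1,\ldots,\phi_{m-2}$ are continuous, then every factor of every coordinate of $\phi \star \psi$ depends continuously on $\psi$ (composing with a fixed continuous map on the outside preserves pointwise convergence), so $\lambda_\phi$ is continuous. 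Hence what this method actually yields is
\[
\Lambda(\cH_{m-1}) = \set{\phi \in \cH_{m-1}}{\phi_i = \binom{n}{i}^{\times}, \ 1 \leq i \leq m-2, \ \text{for some } n \in \Z},
\]
which is strictly larger than $\set{\rho_{m-1}(\wt{n})}{n \in \Z}$: condition \eqref{eq:H.m} pins $\phi_{m-1}$ to $\binom{n}{m-1}^{\times}$ on $\T_Q$ only, and by Lemma~\ref{sec:horbit}(iii) it can be altered at any point outside $\T_Q$ while staying inside $\cH_{m-1}$.

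Your own fallback for $m = 2$ makes the problem unmistakable: $(\cH_1,\star)$ is $H(\T)$ with the pointwise product, which is jointly continuous, so ``settling it directly'' gives $\Lambda(\cH_1) = \cH_1$, not $\set{\rho_1(\wt{n})}{n \in \Z}$. The contrast with case (iii) is exactly the presence there of the factor $\phi_{m-1} \circ \psi_1(x_0)$ with $x_0 \notin \T_Q$ in the $\T$-component of $\ast$, which is what forces the last coordinate to be continuous; in case (ii) no such term exists, because $\psi_1(x_0)$ then ranges over a finite group and the relevant factors are eventually constant along any convergent net. Be aware that the paper's proof is silent on this very point (it says only that ``the last step must be handled separately''), so the obstruction you would hit is real and lies in the statement of (ii) itself, given the paper's definition of $\Lambda$ as continuity of left translations: the set $\set{\rho_{m-1}(\wt{n})}{n \in \Z}$ is the set of members of $E(\T^m,r)$ that are continuous as self-maps of $\T^m$, and for the non-minimal (indeed non-point-transitive) systems of case (ii) this is strictly smaller than the topological centre.
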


\begin{proof}
The case $m = 1$ is well known. We only prove (ii) and (iii), so suppose that $m \geq 2$. Let $\pi_m \colon \T^{\N} \to \T^m$ be the projection onto the first $m$ coordinates. It is a homomorphism from the $(\infty,x_0)$-system $(\T^{\N},s)$ to $(\T^m,r)$. Using the induced homomorphism $\wt{\pi_m} \colon E(\T^{\N},s) \to E(\T^m,r)$, we see that any $a \in E(\T^m,r)$ must be such that
\begin{equation}
a(x)_k = \prod_{j = 0}^k \phi_{k-j}(x_j)
\label{eq:coord}
\end{equation}
for all $x \in \T^m$ and for some $\phi \in \cH_m$. Conversely, given any such function $a \colon \T^m \to \T^m$ corresponding to some $\phi \in \cH_m$, the surjectivity of $\rho_m \colon \cH \to \cH_m$ (Lemma~\ref{sec:horbit}) enables us to find $\phi \in \cH$ and a corresponding $a^{\prime} \in E(\T^{\N},s)$ so that $\rho_m(\phi) = (\phi_k)_{k = 0}^m$ and $\wt{\pi_m}(a^{\prime}) = a$. This shows that $E(\T^m,r)$ consists of all functions of the type defined by \eqref{eq:coord}. Note that the function $\phi_m$ appears in this equation only when $k = m$ in the term $\phi_m(x_0)$. When $x_0 \in \T_Q$, the value $\phi_m(x_0)$ depends on $\phi_1$ due to the restriction \eqref{eq:H.m}. When $x_0 \in \T \setminus \T_Q$, $\phi_m(x_0)$ can attain any value independently of the functions $\phi_k$, $1 \leq k \leq m-1$ by (iii) in Lemma~\ref{sec:horbit}.

In the case $x_0 \in \T_Q$, pick $y_0 \in \T$ so that $y_0^{m!} = x_0$. The desired topological isomorphism is $\cE^{\prime} \colon \cH_{m-1} \to E(\T^m,r)$,
\begin{align*}
\cE^{\prime}(\phi)(x)_k &= \prod_{j=0}^k \phi_{k-j}(x_j) \text{ for all } 1 \leq k \leq m-1, \\
\cE^{\prime}(\phi)(x)_m &= \prod_{j=1}^m \phi_1^{(j)}(y_0^{s(m,j)}) \prod_{l=1}^m \phi_{m-l}(x_l)
\end{align*}
for all $\phi \in \cH_m$ and $x  \in \T^m$. Given $\phi \in \cH_{m-1}$, we can extend this sequence to some $(\phi_k)_{k = 0}^m \in \cH_m$ by using the surjectivity of $\rho_{m-1}$, and the mapping $\cE^{\prime}(\phi) \colon \T^m \to \T^m$ is seen to satisfy \eqref{eq:coord}. This proves that $\cE^{\prime}(\phi) \in E(\T^m,r)$ for every $\phi \in \cH_{m-1}$ and that $\cE^{\prime}$ is surjective. The mapping $\cE^{\prime}$ is clearly continuous. To prove that it is injective, one can argue by induction as in the case of $\cE$ in the proof of Theorem~\ref{sec:main}. The group operation $\star$ making $\cE^{\prime}$ an isomorphism of groups is also easy to find by following the example of the proof of Theorem~\ref{sec:main}.

Suppose that $x_0 \in \T \setminus \T_Q$. In this case, we can define $\cE^{\prime \prime} \colon \cH_{m-1} \times \T \to E(\T^m,r)$ by
\begin{align*}
\cE^{\prime \prime}(\phi, y)(x)_k &= \prod_{j=0}^k \phi_{k-j}(x_j) \text{ for all } 1 \leq k \leq m-1, \\
\cE^{\prime \prime}(\phi, y)(x)_m &= y \prod_{k=1}^m \phi_{m-k}(x_k)
\end{align*}
for all $\phi \in \cH_{m-1}$, $y \in \T$ and $x \in \T^m$. Again, the range of $\cE^{\prime \prime}$ is the whole group $E(\T^m,r)$, and $\cE^{\prime \prime}$ is continuous. Its injectivity is proved as in the previous case, and finding the group operation $\ast$ is straightforward.

In either case, the topological centre of $E(\T^m,r)$ can be found by modifying the arguments of Theorem~\ref{sec:topcentr}. The inductive process is similar, but it stops after finitely many steps. The last step must be handled separately, taking into account whether $x_0 \in \T_Q$ or $x_0 \in \T \setminus \T_Q$.
\end{proof}

\begin{rems}
As mentioned in the introduction, the case of $(2,x_0)$-systems for $x_0 \in \T \setminus \T_Q$ was proved by Namioka in \cite{namioka82}. The techniques we have applied in this section are inspired by those found in the aforementioned source. Most importantly, statements (i) and (iii) of Lemma~\ref{sec:horbit} generalise Lemma 2 in \cite{namioka82}. The explicit description of $\cH$ (and of $\cH_m$) is the main contribution of this section to the work begun by Brown (\cite{brown69}), Hahn (\cite{hahn65}), Jabbari and Namioka (\cite{jabnam10}), and Piku{\l}a (\cite{pikula10}). The cited articles, except the first, refer to the sets $\cH$ or $\cH_m$ or to some related constructions as closures of collections of functions, but none of them describe the sets in conrete terms.

The group operation $\star$ on $\cH_m$, $m \in \N$, is obviously analogous to the group operation on $\cH$ denoted by the same symbol, and the projection $\rho_m \colon \cH \to \cH_m$ is a group homomorphism.
\end{rems}

We shall encounter situations involving inverses of elements of $\cH$ with respect to the operation $\star$, but using an explicit formula for the inverse can be avoided. The proposition below is one example of this.

\begin{prop}
\label{sec:commutator}
Let $\phi,\psi \in \cH$, and suppose that, for some $k \in \Z_+$, $\phi_j = \wt{0}_j$ for all $0 \leq j \leq k$. Then,
\begin{align*}
(\phi^{-1} \star \psi^{-1} \star \phi \star \psi)_j &= 0^{\times} \text{ for all } 1 \leq j \leq k + 1, \\
(\phi^{-1} \star \psi^{-1} \star \phi \star \psi)_{k+2} &= \ov{(\psi_1 \circ \phi_{k+1})} (\phi_{k+1} \circ \psi_1).
\end{align*}
\end{prop}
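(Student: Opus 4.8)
The plan is to avoid inverses altogether by rewriting the commutator identity as a product relation. Put $\alpha = \phi^{-1} \star \psi^{-1} \star \phi \star \psi$; then a direct cancellation gives $\psi \star \phi \star \alpha = \phi \star \psi$, so the whole problem reduces to reading off the coordinates of $\alpha$ from this single relation. The point is that, thanks to the hypothesis on $\phi$, both $\phi \star \psi$ and $\psi \star \phi$ admit completely explicit descriptions in low coordinates, whereas no explicit formula for $\phi^{-1}$ or $\psi^{-1}$ is ever needed.

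First I would record a \emph{reduction lemma} for the convolution. Since $\phi_0 = 1^{\times} = \id_{\T}$ and $\phi_j = 0^{\times}$ for $1 \leq j \leq k$, and since $\chi \circ 0^{\times}$ and $0^{\times} \circ \chi$ are the trivial homomorphism for every $\chi \in H(\T)$, the defining products collapse to
\[
(\phi \star \psi)_n = \psi_n \prod_{j=0}^{n-k-1} \phi_{n-j} \circ \psi_j, \qquad (\psi \star \phi)_n = \psi_n \prod_{i=0}^{n-k-1} \psi_i \circ \phi_{n-i},
\]
with empty products understood as trivial. Using $\psi_0 = \id_{\T}$, both expressions equal $\psi_n$ for $n \leq k$, both equal $\psi_{k+1} \cdot \phi_{k+1}$ for $n = k+1$, and for $n = k+2$ they become $\psi_{k+2} \cdot \phi_{k+2} \cdot (\phi_{k+1} \circ \psi_1)$ and $\psi_{k+2} \cdot \phi_{k+2} \cdot (\psi_1 \circ \phi_{k+1})$, respectively. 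Hence $\phi \star \psi$ and $\psi \star \phi$ agree through coordinate $k+1$ and differ only in the final factor at coordinate $k+2$.

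Next I would extract $\alpha$ inductively from $R \star \alpha = \phi \star \psi$, where $R = \psi \star \phi$ and $R_0 = \id_{\T}$. Separating the $j=0$ term (which is $R_n$, as $\alpha_0 = \id_{\T}$) yields, for each $n \geq 1$,
\[
\prod_{j=1}^{n} R_{n-j} \circ \alpha_j = \ov{(\psi \star \phi)_n}\,(\phi \star \psi)_n.
\]
Assuming $\alpha_j = 0^{\times}$ for $1 \leq j \leq n-1$, every term with $j < n$ is trivial (as $R_{n-j} \circ 0^{\times}$ is trivial), so only the surviving term $R_0 \circ \alpha_n = \alpha_n$ remains, giving $\alpha_n = \ov{(\psi \star \phi)_n}\,(\phi \star \psi)_n$. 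For $n \leq k+1$ the right-hand side is trivial by the agreement established above, so $\alpha_j = 0^{\times}$ for $1 \leq j \leq k+1$. Setting $n = k+2$ and substituting the two explicit values, the common factors $\psi_{k+2}$ and $\phi_{k+2}$ cancel in the abelian group $H(\T)$, leaving exactly $\ov{(\psi_1 \circ \phi_{k+1})}\,(\phi_{k+1} \circ \psi_1)$, as claimed.

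The argument is essentially bookkeeping, so there is no single deep obstacle; the care needed is in the reduction lemma, namely partitioning the index set $\{0,\dotsc,n\}$ correctly according to whether $\phi_{n-j}$ is the identity ($j=n$), trivial ($n-k \leq j \leq n-1$), or general ($j \leq n-k-1$), and then in verifying that once the lower coordinates of $\alpha$ are trivial they genuinely collapse the threefold convolution to the one surviving term. One must also keep in mind throughout that $\ov{\,\cdot\,}$ denotes the group inverse in $H(\T)$, i.e.\ pointwise conjugation, so that the final cancellation is legitimate precisely because $H(\T)$ is abelian.
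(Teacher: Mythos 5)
Your proof is correct, and it rests on the same key rearrangement as the paper's: writing the commutator relation as $\psi \star \phi \star \alpha = \phi \star \psi$ so that no explicit formula for inverses is ever needed, and then exploiting the triviality of $\phi_1,\dotsc,\phi_k$ to collapse the star products. The difference is in the execution. The paper first \emph{constructs} an auxiliary element $\xi \in \cH$ with $\xi_j = 0^{\times}$ for $1 \leq j \leq k+1$ and $\xi_{k+2} = \ov{(\psi_1 \circ \phi_{k+1})}(\phi_{k+1} \circ \psi_1)$ --- which requires checking that these coordinates are compatible with the membership condition \eqref{eq:H.inf} --- then verifies $(\phi \star \psi)_j = (\psi \star \phi \star \xi)_j$ for $1 \leq j \leq k+2$ (a computation it leaves to the reader), and finally concludes $\rho_{k+2}(\xi) = \rho_{k+2}(\phi^{-1} \star \psi^{-1} \star \phi \star \psi)$ using that $\rho_{k+2}$ is a group homomorphism. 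You instead solve for the coordinates of $\alpha = \phi^{-1} \star \psi^{-1} \star \phi \star \psi$ directly by induction on the coordinate index, which buys two things: you never need to exhibit an element of $\cH$ with prescribed coordinates (membership of $\alpha$ in $\cH$ is automatic since $\cH$ is a group), and you supply in full the coordinate computation the paper omits. Your bookkeeping is sound: the partition of the index set in the reduction lemma, the use of $\chi \circ 0^{\times} = 0^{\times} \circ \chi = 0^{\times}$, the inductive collapse to the single surviving term $R_0 \circ \alpha_n = \alpha_n$, and the final cancellation in the abelian group $H(\T)$ are all valid.
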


\begin{proof}
Pick $\xi \in \cH$ so that $\xi_j = 0^{\times}$ for $1 \leq j \leq k+1$ and $\xi_{k+2} = \ov{(\psi_1 \circ \phi_{k+1})} (\phi_{k+1} \circ \psi_1)$. (The condition \eqref{eq:H.inf} is indeed satisfied for $k+2$ in place of $k$.) The claim follows once it is shown that $(\phi \star \psi)_j = (\psi \star \phi \star \xi)_j$ for all $1 \leq j \leq k+2$ since then $\rho_{k+2}(\phi \star \psi) = \rho_{k+2}(\psi \star \phi \star \xi)$, and $\rho_{k+2}(\xi) = \rho_{k+2}(\phi^{-1} \star \psi^{-1} \star \phi \star \psi)$. This is a simple matter of computing both terms, and it is left to the reader.
\end{proof}

As an easy corollary, we see that the group $\cH_m$ is nilpotent for any $m \in \N$. It has a central series consisting of the following closed subgroups:
	\[\cH_{m,n} = \set{\phi \in \cH_m}{ \phi_k = \wt{0}_k \text{ for all } 0 \leq k \leq m-n},
\]
$0 \leq n \leq m$. Also, the Ellis group of a minimal $(m,x_0)$-systems for $m \in \N$ is nilpotent, as is shown in \cite{pikula10} with arguments that avoid a concrete explication of $\cH_m$ or $\cH_{m-1}$. Nilpotent Ellis groups occur also in the case of the so-called nil-systems or nil-transformations, which were studied by Glasner in \cite{glasner93}, a work inspired by \cite{namioka82}, and more recently by Donoso in \cite{donoso14}, where the general structure of the Ellis groups of the nil-systems was analysed with the dynamic cube technique of Host, Kra and Maass (\cite{hkm10}). The basic systems are included in the class of nil-systems.


\section{On systems with quasi-discrete spectrum}

In this section and the next, we shall use the basic systems and their Ellis groups to study systems with quasi-discrete spectrum. Most importantly, we shall identify $(\cH,T)$ as the universal minimal system with quasi-discrete spectrum (Corollary~\ref{sec:universal}).

First, we fix some notations and recall the necessary definitions. Consider a dynamical system $(X,s)$. Let $\cC(X,\T)$ denote the multiplicative subgroup of $\cC(X)$ of all continuous functions from $X$ to $\T$. If $G \subseteq \cC(X,\T)$ is a subgroup, we define $G^s \subseteq \cC(X,\T)$ by
	\[G^s = \set{g \in \cC(X,\T)}{s^{\ast}g = fg \text{ for some } f \in G}.
\]
(Recall that $s^{\ast} \colon \cC(X) \to \cC(X)$ is the adjoint of $s$.) It is easy to see that $G^s$ is also a subgroup of $\cC(X,\T)$. We define a sequence $(G_m(X,s))_{m=0}^{\infty}$ of subgroups of $\cC(X,\T)$ by letting $G_0(X,s)$ be the group of constant functions from $X$ to $\T$ and by defining $G_{m+1}(X,s) = G_m(X,s)^s$ for all $m \in \Z_+$. An inductive argument shows the sequence to be increasing, i.e., $G_m(X,s) \subseteq G_{m+1}(X,s)$ for all $m \in \Z_+$, so we can define a group $G(X,s) = \bigcup_{m = 0}^{\infty} G_m(X,s)$. The functions of $G_1(X,s)$ are called \emph{eigenfunctions} of $(X,s)$, and members of $G(X,s)$ are called \emph{quasi-eigenfunctions} of $(X,s)$. We say that $(X,s)$ has (topological) \emph{quasi-discrete spectrum} if the linear span of $G(X,s)$ is dense in $\cC(X)$ or, equivalently, if the quasi-eigenfunctions separate points. It is not hard to see that the property of having quasi-discrete spectrum is preserved under isomorphisms.

\begin{prop}
\label{sec:qeffactor}
Let $(X,s)$ and $(Y,t)$ be dynamical systems, and let $\pi \colon X \to Y$ be a homomorphism. If $g \in G_m(X,s)$, $m \in \Z_+$, is such that $g(x) = g(x^{\prime})$ whenever $x,x^{\prime} \in X$ satisfy $\pi(x) = \pi(x^{\prime})$, then $g = \pi^{\ast} f$ for some $f \in G_m(Y,t)$.
\end{prop}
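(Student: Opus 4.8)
The plan is to first descend $g$ to a function on $Y$ and then verify, by induction on $m$, that this descended function lands in $G_m(Y,t)$. Since $X$ is compact and $Y$ is Hausdorff, the continuous surjection $\pi$ is a closed map, hence a quotient map. As $g$ is constant on the fibres of $\pi$ by hypothesis, it factors through $\pi$: there is a unique continuous $f \colon Y \to \C$ with $g = \pi^{\ast} f = f \circ \pi$, and $f$ takes values in $\T$ because $g$ does and $\pi$ is surjective. Thus $f \in \cC(Y,\T)$, and the entire content of the proposition is the assertion that $f \in G_m(Y,t)$.

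I would prove this by induction on $m$. For $m = 0$ the function $g$ is constant, so $f$ is constant (again using the surjectivity of $\pi$), whence $f \in G_0(Y,t)$. Suppose the claim holds for some $m \in \Z_+$, and let $g \in G_{m+1}(X,s)$ be constant on fibres. By the definition $G_{m+1}(X,s) = G_m(X,s)^s$, there is some $h \in G_m(X,s)$ with $s^{\ast} g = h g$, i.e. $g(sx) = h(x) g(x)$ for all $x \in X$.

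The key observation is that this multiplier $h$ is itself constant on the fibres of $\pi$. Indeed, if $\pi(x) = \pi(x')$ then $\pi(sx) = t\pi(x) = t\pi(x') = \pi(sx')$, so $g(sx) = g(sx')$ and $g(x) = g(x')$; since $h(x) = g(sx)/g(x)$, we obtain $h(x) = h(x')$. The induction hypothesis therefore applies to $h$ and yields some $h' \in G_m(Y,t)$ with $h = \pi^{\ast} h'$. It then remains to check that $f$ satisfies the eigenfunction-type relation $t^{\ast} f = h' f$: for every $x \in X$, using $\pi \circ s = t \circ \pi$,
\[
(t^{\ast} f)(\pi x) = f(t\pi x) = f(\pi(sx)) = g(sx) = h(x)g(x) = h'(\pi x)\, f(\pi x),
\]
and since $\pi$ is surjective this gives $t^{\ast} f = h' f$ on all of $Y$, with $h' \in G_m(Y,t)$. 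Hence $f \in G_m(Y,t)^t = G_{m+1}(Y,t)$, completing the induction.

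The argument is essentially routine; the only point requiring a moment's care, and what I regard as the main obstacle, is recognising that the multiplier $h$ descends through $\pi$ as well, since this is precisely what makes the induction close. Everything else is bookkeeping with the definitions and the intertwining identity $\pi \circ s = t \circ \pi$.
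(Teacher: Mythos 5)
Your proof is correct and follows essentially the same route as the paper's: induction on $m$, descending $g$ through the (closed, hence quotient) map $\pi$, observing that the multiplier $h = (s^{\ast}g)\ov{g}$ is also constant on $\pi$-fibres so the induction hypothesis applies to it, and then verifying $t^{\ast}f = h'f$ via $\pi \circ s = t \circ \pi$. The step you single out as the main obstacle --- that the multiplier descends --- is exactly the pivot of the paper's argument as well.
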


\begin{proof}
The proof is done by induction on $m$. The case $m = 0$ is clear. Assuming that the claim of the proposition holds for some $m \in \Z_+$, pick a function $g \in G_{m+1}(X,s)$ that is constant on the $\pi$-fibres, and let $f \in \cC(X,\T)$ be the function for which $g = \pi^{\ast}f$ (the continuity of $f$ follows from a compactness argument). Let $g^{\prime} \in G_m(X,s)$ be such that $s^{\ast} g = g^{\prime} g$. Now, $s^{\ast} g$ is constant on the $\pi$-fibres, so $g^{\prime} = (s^{\ast}g)\ov{g}$ has this property as well. Therefore, we can find $f^{\prime} \in G_m(Y,t)$ with $g^{\prime} = \pi^{\ast}f^{\prime}$. It is easy to check that $t^{\ast} f = f^{\prime} f$, so $f \in G_{m+1}(Y,t)$, completing the argument.
\end{proof}

Quasi-eigenfunctions are connected to basic systems. For a dynamical system $(X,s)$, if $f_m \in G_m(X,s)$ for some $m \in \N$, we can find unique functions $f_k \in G_k(X,s)$ for $0 \leq k \leq m-1$ so that $s^{\ast}f_k = f_{k-1} f_k$ for all $1 \leq k \leq m$. Let $x_0 \in \T$ be the constant value of $f_0$, and define $\pi \colon X \to \T^m$ by
	\[\pi(x) = (f_1(x),f_2(x),\ldots,f_m(x))
\]
for all $x \in X$. This function is continuous. If $r \colon \T^m \to \T^m$ is the homeomorphism for which $(\T^m,r)$ is the $(m,x_0)$-system, then $\pi \circ s = r \circ \pi$. We say that a function $\pi$ with this property is an $(m,x_0)$\emph{-representation} of $(X,s)$ or simply a \emph{basic representation} if we wish to leave the parameters $m$ and $x_0$ vague. In the terminology of Hahn and Parry, the triple $(\T^m,r,\pi)$ is a \emph{diagonal representation} of $(X,s)$. If, on the other hand, $\pi \colon X \to \T^m$ is an $(m,x_0)$-representation of $(X,s)$ for some $m \in \N$ and $x_0 \in \T$, the coordinate functions $\pi_k$ of $\pi$ satisfy $\pi_k \in G_k(X,s)$ for all $1 \leq k \leq m$ and $s^{\ast} \pi_1 = x_0 \pi_1$. This is the content of Theorem 9 in \cite{hahnparry65}. It follows that we can characterise the systems with quasi-discrete spectrum as those systems for which there exists a family of basic representations that separates points. (Note that these results are stated in \cite{hahnparry65} for \emph{minimal metric} systems, but these added restrictions are not necessary for us.) Consequently, a system with quasi-discrete spectrum is isomorphic to a subsystem of a product of basic systems, and therefore it is distal. Observe that the property of having quasi-discrete spectrum is preserved when taking products and subsystems, as is easy to show by applying the definition. The existence of a universal minimal system with quasi-discrete spectrum comes as a corollary of this fact (use Theorem 4.30 in chapter 4 of \cite{devries}).

\begin{prop}
\label{sec:qdslift}
If a system $(X,s)$ has quasi-discrete spectrum, then $(E(X,s),\lambda_s)$ has quasi-discrete spectrum.
\end{prop}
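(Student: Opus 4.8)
The plan is to manufacture quasi-eigenfunctions on $(E(X,s),\lambda_s)$ directly from the quasi-eigenfunctions of $(X,s)$ by precomposing with evaluation maps, and then to apply the point-separation criterion for quasi-discrete spectrum recorded above. Write $E = E(X,s)$. For each $x_0 \in X$ let $\mathrm{ev}_{x_0} \colon E \to X$ be the evaluation $a \mapsto a(x_0)$; since $E \subseteq X^X$ carries the product topology, this is the restriction of a coordinate projection and hence continuous. Given $g \in \cC(X,\T)$, set $G = g \circ \mathrm{ev}_{x_0} \in \cC(E,\T)$.

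The key computation is that precomposition with $\mathrm{ev}_{x_0}$ intertwines the two dynamics. Because $\lambda_s(a) = s \circ a$, we have $\mathrm{ev}_{x_0}(\lambda_s a) = s(a(x_0)) = s(\mathrm{ev}_{x_0}(a))$, whence $\lambda_s^{\ast} G = (s^{\ast} g) \circ \mathrm{ev}_{x_0}$ for every $g \in \cC(X,\T)$. I would then prove by induction on $m$ that $g \in G_m(X,s)$ implies $G \in G_m(E,\lambda_s)$. The case $m = 0$ is immediate, as constants pull back to constants. If $g \in G_{m+1}(X,s)$, say $s^{\ast} g = g' g$ with $g' \in G_m(X,s)$, then the intertwining identity gives $\lambda_s^{\ast} G = G' G$, where $G' = g' \circ \mathrm{ev}_{x_0}$ lies in $G_m(E,\lambda_s)$ by the inductive hypothesis; hence $G \in G_{m+1}(E,\lambda_s)$.

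It remains to separate points. If $a, b \in E$ are distinct, then $a(x_0) \neq b(x_0)$ for some $x_0 \in X$; since $(X,s)$ has quasi-discrete spectrum, its quasi-eigenfunctions separate points, so some $g \in G(X,s)$ satisfies $g(a(x_0)) \neq g(b(x_0))$. The corresponding $G = g \circ \mathrm{ev}_{x_0} \in G(E,\lambda_s)$ then separates $a$ and $b$. Thus $G(E,\lambda_s)$ separates the points of $E$, and by the equivalence noted above (the family is a multiplicative group of $\T$-valued functions containing the constants and closed under conjugation, so Stone--Weierstrass applies) the linear span of $G(E,\lambda_s)$ is dense in $\cC(E)$; that is, $(E,\lambda_s)$ has quasi-discrete spectrum.

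I expect no serious obstacle: the one thing to get right is the level-preserving induction, and that reduces entirely to the intertwining identity $\lambda_s^{\ast}(g \circ \mathrm{ev}_{x_0}) = (s^{\ast} g) \circ \mathrm{ev}_{x_0}$. The conceptual point, which sidesteps any appeal to the explicit structure of $\cH$, is simply that the members of $E$ act as self-maps of $X$, so evaluating a quasi-eigenfunction of $X$ along these maps transports the cocycle relation on $X$ verbatim into the cocycle relation on $E$.
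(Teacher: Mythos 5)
Your proof is correct and is essentially the paper's argument made explicit: the paper disposes of this proposition in one line by noting that $(E(X,s),\lambda_s)$ is a subsystem of a product of copies of $(X,s)$ and that quasi-discrete spectrum is preserved under products and subsystems, and your evaluation maps $\mathrm{ev}_{x_0}$ are exactly the coordinate projections of that product restricted to $E(X,s)$. The intertwining identity and the level-preserving induction you carry out are precisely the unwound content of those two preservation facts, so there is nothing to object to, only a difference in how much is left implicit.
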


The proof of this is immediate since $(E(X,s),\lambda_s)$ can be seen as a subsystem of products of $(X,s)$. As we shall see, the converse is not true, i.e., there is a system $(X,s)$ that does not have quasi-discrete spectrum but for which $(E(X,s),\lambda_s)$ does have it.

Recall that we use the notation $l^{\infty} = l^{\infty}(\Z)$.

\begin{defns}
A function $f \in l^{\infty}$ is said to be of \emph{polynomial type} if there exists some polynomial $p \colon \Z \to \R$ so that $f(n) = \exx{p(n)}$ for all $n \in \Z$. When $m \in \Z_+$, the set of functions of polynomial type for which the corresponding polynomial has degree at most $m$ is denoted by $\cP_m$. Let $\cP = \bigcup_{m = 0}^{\infty} \cP_m$. The smallest closed linear subspace of $l^{\infty}$ containing $\cP$ is denoted by $\cW$.
\end{defns}

Note that $\cP$ is a group under multiplication, so $\cW$, the `Weyl algebra', is indeed an algebra. It contains the constants, and it is closed under complex conjugation. It is also translation invariant since $\cP$ has this property. In fact, $\cW$ is $m$-admissible and hence determines a right topological semigroup compactification of $\Z$ (see \cite{bjm} for general theory on semigroup compactifications). We deal with this in Corollary~\ref{sec:wcomp}. Observe also that the linear span of $\cP_1$ is dense in $\mathcal{AP}(\Z)$, the space of almost periodic functions on $\Z$ (which coincides with the space $\mathcal{SAP}(\Z)$ of strongly almost periodic functions on $\Z$, i.e., the closed linear span of characters of $\Z$; \cite{bjm}).

The first half of the next proposition appears in the proof of Theorem 1 in \cite{hahnparry65}.

\begin{prop}
\label{sec:GP}
Let $(X,s)$ be a minimal system, and let $x \in X$. Given $f \in \cC(X)$, we have $f \in G_m(X,s)$ for some $m \in \Z_+$ if and only if $\omega_x f \in \cP_m$.
\end{prop}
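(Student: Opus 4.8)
The plan is to prove both implications at once by induction on $m$, leaning on the intertwining relation $\omega_x \circ s^* = R \circ \omega_x$ from the preliminaries, the multiplicativity of $\omega_x$ (since $\omega_x(fg)(n) = f(s^n x)g(s^n x)$), and the fact that $\omega_x$ preserves complex conjugation. The base case $m = 0$ is immediate in both directions. A constant $c = \exx{\theta} \in \T$ produces the constant sequence $\exx{\theta} \in \cP_0$; conversely, if $\omega_x f$ is constant, then $f$ agrees with a fixed value of $\T$ along the orbit $\set{s^n x}{n \in \Z}$, which is dense by minimality, so $f$ equals that constant by continuity. This is the only place minimality is needed.

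For the forward direction, assume the equivalence for $m$ and take $f \in G_{m+1}(X,s)$, so $f \in \cC(X,\T)$ and $s^* f = g f$ for some $g \in G_m(X,s)$. Applying $\omega_x$ and using the intertwining relation gives $R(\omega_x f) = (\omega_x g)(\omega_x f)$. By the inductive hypothesis $\omega_x g(n) = \exx{q(n)}$ for a polynomial $q$ of degree at most $m$, so, writing $h = \omega_x f$ (a $\T$-valued sequence, as $|f| = 1$), the relation becomes $h(n+1)/h(n) = \exx{q(n)}$. The one genuinely non-formal step is to solve this circle-valued difference equation. I would use the antidifference in the binomial-coefficient basis: expanding $q(n) = \sum_{k=0}^m a_k \binom{n}{k}$ and setting $P(n) = \sum_{k=0}^m a_k \binom{n}{k+1}$, Pascal's rule yields $P(n+1) - P(n) = q(n)$ exactly, with $\deg P \leq m+1$. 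Then $u(n) = h(n)\exx{-P(n)}$ satisfies $u(n+1) = u(n)$, so $u \equiv \exx{\theta_0}$ for some $\theta_0 \in \R$, and hence $\omega_x f(n) = \exx{P(n) + \theta_0} \in \cP_{m+1}$.

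For the converse, assume $\omega_x f \in \cP_{m+1}$, say $\omega_x f(n) = \exx{p(n)}$ with $\deg p \leq m+1$. As before $|f| = 1$ along the dense orbit, hence $f \in \cC(X,\T)$ by continuity. Put $g = (s^* f)\ov{f} \in \cC(X,\T)$, the forced candidate for the relation $s^* f = g f$. Multiplicativity, conjugation-invariance and the intertwining relation give $\omega_x g(n) = \omega_x f(n+1)\,\ov{\omega_x f(n)} = \exx{p(n+1) - p(n)}$, and the forward difference $p(n+1) - p(n)$ is a polynomial of degree at most $m$, so $\omega_x g \in \cP_m$. The inductive hypothesis then yields $g \in G_m(X,s)$, whence $s^* f = g f$ exhibits $f \in G_m(X,s)^s = G_{m+1}(X,s)$.

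I expect the main obstacle to be precisely the circle-valued difference equation in the forward step: one should avoid lifting $h$ to a real-valued phase directly, since that lift need not be polynomial, and instead cancel against the explicit polynomial antidifference $P$, after which the surviving factor is forced to be constant. Everything else reduces to the homomorphism properties of $\omega_x$ and the elementary fact that finite differencing lowers polynomial degree by one while antidifferencing raises it by one.
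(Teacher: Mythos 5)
Your argument is correct, but the forward half takes a genuinely different route from the paper. The converse half coincides with the paper's proof: the same induction via $\omega_x((s^{\ast}f)\ov{f}) = R(\omega_x f)\,\ov{\omega_x f} \in \cP_m$, except that you drop the paper's (unnecessary) case split on whether the leading coefficient of $p$ vanishes, and you make explicit the density argument showing $f \in \cC(X,\T)$, which the paper leaves implicit. For the forward direction, however, the paper does not induct at all: it takes the chain $f_0,\ldots,f_m = f$ with $s^{\ast}f_k = f_{k-1}f_k$, forms the $(m,x_0)$-representation $\pi = (f_1,\ldots,f_m) \colon X \to \T^m$, and reads off
\[
	f(s^n x) = (r^n \pi(x))_m = \prod_{k=0}^m \binom{n}{m-k}^{\times}(f_k(x))
\]
from the iterate formula of Proposition~\ref{sec:iter}, which is visibly an element of $\cP_m$. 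You instead solve the circle-valued difference equation $h(n+1) = \exx{q(n)}\,h(n)$ by constructing the antidifference $P(n) = \sum_{k=0}^m a_k \binom{n}{k+1}$ in the binomial basis (Pascal's rule does hold for the generalised coefficients on all of $\Z$, as the paper records, so this is valid for two-sided sequences) and observing that $n \mapsto h(n)\,\ov{\exx{P(n)}}$ is shift-invariant, hence constant. Both computations ultimately rest on Pascal's rule; the paper's route is shorter because Proposition~\ref{sec:iter} is already available and it keeps the link to diagonal representations explicit, which is thematically important elsewhere in the paper, while yours is self-contained and correctly negotiates the one delicate point (one cannot lift $h$ to a real polynomial phase a priori). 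One small inaccuracy in your prose: minimality is not needed \emph{only} in the base case, since you also (correctly) invoke density of the orbit in the converse inductive step to conclude $|f| \equiv 1$ on $X$ from $\omega_x f \in \cP_{m+1}$.
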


\begin{proof}
Suppose that $f \in G_m(X,s)$, $m \in \Z_+$. We can find $f_k \in G_k(X,s)$ for $0 \leq k \leq m$ so that $s^{\ast}f_k = f_{k-1} f_k$ for all $1 \leq k \leq m$ and $f = f_m$. Let $x_0 \in \T$ be the constant $x_0 = f_0$, and let $\pi \colon X \to \T^m$ be the $(m,x_0)$-representation of $(X,s)$ whose coordinate functions are $f_k$, $1 \leq k \leq m$. Let $r \colon \T^m \to \T^m$ be the homeomorphism of the $(m,x_0)$-system. Now, for all $n \in \Z$,
	\[f(s^n x) = \pi(s^n x)_m = (r^n \pi(x))_m = \prod_{k=0}^m \binom{n}{m-k}^{\times}(f_k(x)).
\]
It is clear that the product term on the right describes an element of $\cP_m$.

The converse (i.e., the claim that if $f \in \cC(X)$ and $\omega_x f \in \cP_m$, then $f \in G_m(X,s)$) is proved by induction on $m \in \Z_+$. In the case $m = 0$, any $f \in \cC(X)$ with $\omega_x f \in \cP_0$ must be constant (in $\T$) since the orbit of $x$ is dense in $X$, so $f \in G_0(X,s)$. Assuming that the claim holds for some $m \in \Z_+$, consider a function $f \in \cC(X)$ such that $\omega_x f \in \cP_{m+1}$. Let $p \colon \Z \to \R$ be the polynomial corresponding to $f$, say $p(n) = \theta_0 + \sum_{k=1}^{m+1} \theta_k n^k$, $n \in \Z$, $\theta_k \in \R$, $0 \leq k \leq m+1$. If $\theta_{m+1} = 0$, then $\omega_x f \in \cP_m$ and $f \in G_m(X,s) \subseteq G_{m+1}(X,s)$. Suppose that $\theta_{m+1} \neq 0$. It is easy to check that $n \mapsto p(n+1) - p(n) : \Z \to \R$ is polynomial whose degree is $m$, so $\omega_x((s^{\ast}f) \ov{f}) = R(\omega_x f) \ov{\omega_x f} \in \cP_m$. (Recall that $R$ denotes the shift operator on $l^{\infty}$.) Thus, $(s^{\ast}f) \ov{f} \in G_m(X,s)$, from which it follows that $f \in G_{m+1}(X,s)$.
\end{proof}

\begin{thm}
\label{sec:qdsweyl}
Let $(X,s)$ be a minimal system, and let $x \in X$. The system $(X,s)$ has quasi-discrete spectrum if and only if $\lspan(\omega_x \cC(X) \cap \cP)$ is dense in $\omega_x \cC(X)$. Therefore, if $(X,s)$ has quasi-discrete spectrum, then $\omega_x \cC(X) \subseteq \cW$.
\end{thm}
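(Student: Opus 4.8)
The engine of this proof is Proposition~\ref{sec:GP}, which was just established. The plan is to exploit that, since $(X,s)$ is minimal, the point $x$ is transitive, so $\omega_x$ is an isometry; being an isometry from the Banach space $\cC(X)$ into $l^{\infty}$, its image $\omega_x\cC(X)$ is complete and hence closed in $l^{\infty}$. Thus $\omega_x$ is a linear isometric isomorphism of $\cC(X)$ onto the closed subspace $\omega_x\cC(X)$, and the whole statement should reduce to transporting the density condition across this isomorphism.

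First I would record the set identity $\omega_x G(X,s) = \omega_x\cC(X) \cap \cP$. The inclusion $\subseteq$ is the forward half of Proposition~\ref{sec:GP}: each $f \in G(X,s)$ lies in some $G_m(X,s)$, whence $\omega_x f \in \cP_m \subseteq \cP$, and trivially $\omega_x f \in \omega_x\cC(X)$. For $\supseteq$, take $g \in \omega_x\cC(X) \cap \cP$ and write $g = \omega_x f$ with $f \in \cC(X)$; since $\omega_x f = g \in \cP$, the converse half of Proposition~\ref{sec:GP} gives $f \in G(X,s)$, so $g \in \omega_x G(X,s)$.

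Next I would deduce the equivalence. Because $\omega_x$ is linear, it carries $\lspan G(X,s)$ onto $\lspan(\omega_x\cC(X) \cap \cP)$; because it is an isometric isomorphism onto $\omega_x\cC(X)$, it sends dense subsets of $\cC(X)$ to dense subsets of $\omega_x\cC(X)$, and conversely. Hence $\lspan G(X,s)$ is dense in $\cC(X)$ --- that is, $(X,s)$ has quasi-discrete spectrum --- if and only if $\lspan(\omega_x\cC(X) \cap \cP)$ is dense in $\omega_x\cC(X)$, which is the stated \emph{if and only if}.

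For the final \emph{therefore}, suppose $(X,s)$ has quasi-discrete spectrum, so $\lspan(\omega_x\cC(X) \cap \cP)$ is dense in $\omega_x\cC(X)$. Since $\omega_x\cC(X) \cap \cP \subseteq \cP \subseteq \cW$ and $\cW$ is a closed linear subspace, the closure of $\lspan(\omega_x\cC(X) \cap \cP)$ in $l^{\infty}$ is contained in $\cW$. As $\omega_x\cC(X)$ is itself closed in $l^{\infty}$, this closure coincides with the closure taken inside $\omega_x\cC(X)$, which by density is all of $\omega_x\cC(X)$; therefore $\omega_x\cC(X) \subseteq \cW$. I do not expect a genuine obstacle: the real content has already been absorbed into Proposition~\ref{sec:GP}, and the only point demanding care is the identification of the closure inside $\omega_x\cC(X)$ with the closure inside $l^{\infty}$, which rests precisely on the closedness of $\omega_x\cC(X)$ guaranteed by the transitivity of $x$ in the minimal system $(X,s)$.
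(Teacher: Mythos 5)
Your proof is correct and follows the same route as the paper: both rest on the identity $\omega_x G(X,s) = \omega_x\cC(X) \cap \cP$ from Proposition~\ref{sec:GP} and then transport density across $\omega_x$. The paper states this in two terse sentences; your version merely makes explicit the supporting facts (that $\omega_x$ is an isometry onto a closed subspace, and that the closure can be taken in $l^{\infty}$), which is exactly the care the paper leaves to the reader.
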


\begin{proof}
Firstly, by Proposition~\ref{sec:GP}, $\omega_x \cC(X) \cap \cP = \omega_x G(X,s)$. Secondly, the system $(X,s)$ has quasi-discrete spectrum if and only if $\lspan G(X,s)$ is dense in $\cC(X)$. Combining these facts results in the first claim. The second one follows from the first one.
\end{proof}

The converse of the inclusion statement does not hold. In the next section, we construct a minimal system $(X,s)$ with $\omega_x\cC(X) \subseteq \cW$ for some (or any) $x \in X$ but which does not have quasi-discrete spectrum. (It is the same system that serves as a counterexample to the converse of Proposition~\ref{sec:qdslift}.)

The group $G(\cH,T)$ of all quasi-eigenfunctions of $(\cH,T)$ is obtained from the character group of $H(\T)^{\Z_+}$. Let $V$ denote the weak product of $\Z_+$ copies of $\T$, i.e., $V$ consists of those $v \in \T^{\Z_+}$ for which $v_n = 1$ for all except at most finitely many $n \in \Z_+$, and $V$ is equipped with the usual pointwise product. Let $\sigma \colon V \to V$ be the shift map: $\sigma(v)_k = v_{k+1}$, $v \in V$, $k \in \Z_+$. We define $q \colon V \to \cC(\cH,\T)$ by
	\[q(v)(\phi) = \prod_{k = 0}^{\infty} \phi_k(v_k),
\]
$v \in V$, $\phi \in \cH$. This function is a homomorphism. Computing $T^{\ast}q(v)$ for an arbitrary $v \in V$ of the form $v = (v_0,v_1,\ldots,v_n,1,1,\ldots)$ yields
\begin{align*}
T^{\ast}q(v)(\phi) &= q(v)(\wt{1} \star \phi) = \prod_{k = 0}^n (\wt{1} \star \phi)_k(v_k) = v_0 \prod_{k = 1}^n \phi_{k-1}(v_k) \phi_k(v_k) \\
&= \left[ \prod_{k = 0}^{n-1} \phi_k(v_{k+1}) \right] \left[ \prod_{k = 0}^n \phi_k(v_k) \right] = q(\sigma(v)v)(\phi),
\end{align*}
$\phi \in \cH$. Hence, $T^{\ast}q(v) = q(\sigma(v)v)$ for all $v \in V$. An inductive argument shows that $q(v) \in G_m(\cH,T)$ for any $v \in V$ for which $v_n = 1$ for $n > m$, $m \in \Z_+$, and consequently, $q(V) \subseteq G(\cH,T)$. The mapping $q$ is injective. To show this, consider $v,w \in V$ such that $q(v) = q(w)$. In particular, $v_0 = q(v)(\wt{0}) = q(w)(\wt{0}) = w_0$. If $n \in \Z_+$ and $v_k = w_k$ for all $0 \leq k \leq n$, then
\begin{align*}
\left[\prod_{k = 0}^n \binom{n+1}{k}^{\times}(v_k) \right] v_{n+1} &= q(v)(\wt{n+1}) = q(w)(\wt{n+1}) \\
&= \left[\prod_{k = 0}^n \binom{n+1}{k}^{\times}(v_k) \right] w_{n+1},
\end{align*}
so $v_{n+1} = w_{n+1}$. The homomorphism $q$ maps $V$ onto $G(\cH,T)$. Firstly, any constant function $c \in \cC(\cH,\T)$ can be written as $c = q(c,1,1,\ldots)$, so $\cG_0(\cH,T) \subseteq q(V)$. Assuming that $G_m(\cH,T) \subseteq V$ for some $m \in \Z_+$, consider a quasi-eigenfunction $g \in G_{m+1}(\cH,T)$ with $T^{\ast}g = fg$ for some $g \in G_m(\cH,T)$. Let $v \in V$ be such that $f = q(v)$, and define $w \in V$ so that $\sigma(w) = v$ and $w_0 = g(\wt{0})$. Now, $q(w) \in G_{m+1}(\cH,T)$ has the property $T^{\ast}q(w) = q(v)q(w) = fq(w)$, so $g\ov{q(w)} \in G_{m+1}(\cH,T)$ satisfies $T^{\ast}(g\ov{q(w)}) = g\ov{q(w)}$. Since $(\cH,T)$ is minimal, the function $g\ov{q(w)}$ must be constant. We have $(g\ov{q(w)})(\wt{0}) = 1$, so $g = q(w)$, proving that $G_{m+1}(\cH,T) \subseteq q(V)$.

Note that the family $q(V)$ separates the points of $\cH$ since it consists of restrictions of characters of $H(\T)^{\Z_+}$. The following theorem is now proved:

\begin{thm}
The mapping $q \colon V \to G(\cH,T)$ is a group isomorphism, and $(\cH,T)$ has quasi-discrete spectrum.
\end{thm}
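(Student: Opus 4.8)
The plan is to collect the facts established in the discussion immediately preceding the statement and then read off the spectral consequence. First I would record that $q$ is a group homomorphism: for $v,w \in V$ and $\phi \in \cH$ the defining products are finite, and since each coordinate $\phi_k$ is a homomorphism of $\T$, one has $q(vw)(\phi) = \prod_k \phi_k(v_k w_k) = \prod_k \phi_k(v_k)\phi_k(w_k) = q(v)(\phi)\,q(w)(\phi)$, so $q(vw) = q(v)q(w)$. Next I would invoke injectivity: matching $v$ and $w$ coordinate by coordinate, one evaluates at the point $\wt{n+1}$, whose top coordinate is $\binom{n+1}{n+1}^{\times} = 1^{\times}$, so once the lower coordinates agree the comparison forces $v_{n+1} = w_{n+1}$.

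The substance is carried by the identity $T^{\ast}q(v) = q(\sigma(v)v)$, which I would exploit in two directions. Upward, it shows by induction on the highest nontrivial coordinate of $v$ that $q(v) \in G_m(\cH,T)$, hence $q(V) \subseteq G(\cH,T)$. Downward, it drives surjectivity: given $g \in G_{m+1}(\cH,T)$ with $T^{\ast}g = fg$ and $f = q(v)$, I would set $w \in V$ by $\sigma(w) = v$ and $w_0 = g(\wt{0})$, so that $g\,\ov{q(w)}$ is a $T$-invariant continuous function into $\T$. Since $(\cH,T)$ is minimal (being the orbit closure of $\wt{0}$ in a distal system), such a function is constant, and normalising at $\wt{0}$ gives $g = q(w)$. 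Combining the two directions yields $q(V) = G(\cH,T)$, and together with the homomorphism law and injectivity this shows $q$ is a group isomorphism onto $G(\cH,T)$.

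For the second assertion I would argue that $G(\cH,T) = q(V)$ separates the points of $\cH$. Each $q(v)$ is the restriction to $\cH$ of the character $\phi \mapsto \prod_k \phi_k(v_k)$ of the compact abelian group $H(\T)^{\Z_+}$, and as $v$ ranges over $V$ these exhaust all such characters. Concretely, if $\phi \neq \psi$ in $\cH$ then $\phi_k \neq \psi_k$ for some $k$, so there is $x \in \T$ with $\phi_k(x) \neq \psi_k(x)$; taking $v$ to be $x$ in coordinate $k$ and $1$ elsewhere gives $q(v)(\phi) \neq q(v)(\psi)$. Thus the quasi-eigenfunctions of $(\cH,T)$ separate points, which is exactly the definition of $(\cH,T)$ having quasi-discrete spectrum (equivalently, $\lspan G(\cH,T)$ is dense in $\cC(\cH)$ by the Stone--Weierstrass theorem, $q(V)$ being a conjugation-closed group of $\T$-valued functions containing the constants).

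I expect the surjectivity step to be the main obstacle, as it is the only place requiring genuine input beyond formal manipulation: one must produce the preimage $w$ and then appeal to minimality of $(\cH,T)$ to kill the $T$-invariant factor $g\,\ov{q(w)}$. The remaining steps---the homomorphism law, injectivity, the inclusion $q(V) \subseteq G(\cH,T)$, and the separation-of-points conclusion---are routine once the translation identity $T^{\ast}q(v) = q(\sigma(v)v)$ is in hand.
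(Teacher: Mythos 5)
Your proposal is correct and follows essentially the same route as the paper: the homomorphism law, injectivity via evaluation at $\wt{n+1}$, the translation identity $T^{\ast}q(v) = q(\sigma(v)v)$ driving both the inclusion $q(V) \subseteq G(\cH,T)$ and the inductive surjectivity argument (with minimality killing the $T$-invariant function $g\,\ov{q(w)}$), and finally separation of points because the $q(v)$ are restrictions of characters of $H(\T)^{\Z_+}$. The only differences are cosmetic elaborations (the explicit Stone--Weierstrass remark and the concrete point-separation computation), which the paper leaves implicit.
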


The representation of $(\cH,T)$ as a subalgebra of $l^{\infty}$ is the Weyl algebra:

\begin{thm}
\label{sec:hweyl}
For the system $(\cH,T)$, we have $\omega_{\wt{0}} \cC(\cH) = \cW$.
\end{thm}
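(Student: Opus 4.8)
The plan is to establish the two inclusions separately, the inclusion $\omega_{\wt 0}\cC(\cH) \subseteq \cW$ being essentially free. By Lemma~\ref{sec:horbit}, $\cH = \ov{\set{\wt n}{n \in \N}}$, so the positive semi-orbit of $\wt 0$ is dense and $\wt 0$ is a transitive point; consequently $\omega_{\wt 0}$ is an isometry and $\omega_{\wt 0}\cC(\cH)$ is a closed, translation-invariant $C^{\ast}$-subalgebra of $l^{\infty}$. Since $(\cH,T)$ has quasi-discrete spectrum, Theorem~\ref{sec:qdsweyl} applied with $x = \wt 0$ yields $\omega_{\wt 0}\cC(\cH) \subseteq \cW$ immediately.

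For the reverse inclusion it suffices to prove $\cP \subseteq \omega_{\wt 0}\cC(\cH)$, because $\omega_{\wt 0}\cC(\cH)$ is a closed linear subspace of $l^{\infty}$ while $\cW$ is by definition the smallest such subspace containing $\cP$. The natural preimages to try are the quasi-eigenfunctions $q(v)$, $v \in V$, constructed just before the statement. Using the explicit form $\wt n = (\binom{n}{k}^{\times})_{k=0}^{\infty}$ from Proposition~\ref{sec:iter}, I would compute, for $v = (\exx{\theta_0},\ldots,\exx{\theta_m},1,1,\ldots) \in V$ with $\theta_k \in \R$,
\[
\omega_{\wt 0}(q(v))(n) = q(v)(\wt n) = \prod_{k=0}^{m} \binom{n}{k}^{\times}(\exx{\theta_k}) = \exx{\sum_{k=0}^{m} \theta_k \binom{n}{k}},
\]
which is a function of polynomial type lying in $\cP_m$.

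It remains to observe that, as $v$ runs through $V$, the polynomials $n \mapsto \sum_{k=0}^{m} \theta_k \binom{n}{k}$ run through all real polynomials. This is where the generalised binomial coefficients pull their weight: since $n \mapsto \binom{n}{k}$ has degree exactly $k$, the family $\{\binom{n}{k}\}_{k=0}^{m}$ is an $\R$-basis for the space of real polynomials of degree at most $m$ (equivalently, the triangular change-of-basis matrix assembled from the Stirling numbers $s(k,j)$ is invertible over $\R$). Hence every function $\exx{p(n)}$ in $\cP_m$ has the required shape, so each element of $\cP$ equals $\omega_{\wt 0}(q(v))$ for a suitable $v \in V$, and $\cP \subseteq \omega_{\wt 0}\cC(\cH)$ follows.

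I do not expect a genuine obstacle; the real content is that the two independently obtained descriptions of $(\cH,T)$ --- quasi-discrete spectrum feeding Theorem~\ref{sec:qdsweyl} on one side, and the explicit isomorphism $q \colon V \to G(\cH,T)$ on the other --- meet precisely at $\cW$. The only point demanding a little care is verifying that the binomials span all real polynomials, and noting that, because $\omega_{\wt 0}\cC(\cH)$ is already closed, no separate density argument is needed once $\cP$ has been captured.
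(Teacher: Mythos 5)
Your proof is correct and is essentially the paper's own argument: the inclusion $\omega_{\wt{0}} \cC(\cH) \subseteq \cW$ is obtained exactly as in the paper from Theorem~\ref{sec:qdsweyl}, and the reverse inclusion rests on the same key computation, namely that $\omega_{\wt{0}} q(v)$ is the exponential of a real linear combination of binomial coefficients. The only difference is organisational: the paper peels off the leading term $\exx{(m+1)!\theta_{m+1}\binom{n}{m+1}}$ and inducts on the degree, whereas you invert the triangular Stirling change of basis in one step and so exhibit every element of $\cP$ directly as some $\omega_{\wt{0}} q(v)$ --- the same linear algebra, packaged without the induction.
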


\begin{proof}
We know that $\omega_{\wt{0}} \cC(\cH) \subseteq \cW$ since $(\cH,T)$ has quasi-discrete spectrum. On the other hand, $\cP_m \subseteq \omega_{\wt{0}} \cC(\cH)$ for every $m \in \Z_+$, and we can prove this by induction on $m$. The case $m = 0$ is clear. Assume that $\cP_m \subseteq \omega_{\wt{0}} \cC(\cH)$ for some $m \in \Z_+$, and let $f \in \cP_{m+1}$. Suppose that $f(n) = \exx{a(n)}$, $n \in \Z$, where $a \colon \Z \to \R$ is a polynomial of the form $a(n) = \theta_0 + \sum_{k=1}^{m+1} \theta_k n^k$, $n \in \Z$, where $\theta_k \in \R$, $0 \leq k \leq m+1$. Let $b \colon \Z \to \R$ be the polynomial $b(n) = (m+1)! \theta_{m+1} \binom{n}{m+1}$, $n \in \Z$, and define $g \in \cP_{m+1}$ by $g(n) = \exx{b(n)}$, $n \in \Z$. Then, $f \ov{g} \in \cP_m \subseteq \omega_{\wt{0}} \cC(\cH)$. The function $g$ can be expressed as $g = \omega_{\wt{0}} q(v)$, the element $v \in V$ being chosen so that $v_{m+1} = \exx{(m+1)!\theta_{m+1}}$ and $v_k = 1$ for $k \neq m+1$, so $g \in \omega_{\wt{0}} \cC(\cH)$. Hence, also $f \in \omega_{\wt{0}} \cC(\cH)$, as desired. We conclude that $\cW \subseteq \omega_{\wt{0}} \cC(X)$.
\end{proof}

\begin{cor}
\label{sec:wcomp}
Let $\tau \colon \Z \to \cH$ be the group isomorphism $\tau(n) = \wt{n}$, $n \in \Z$. Then, $(\tau,\cH)$ is a $\cW$-compactification of $\Z$.
\end{cor}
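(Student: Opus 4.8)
The plan is to verify directly that $(\tau,\cH)$ satisfies the defining properties of the $\cW$-compactification of $\Z$, drawing together the machinery already assembled. Recall that, since $\Z$ is discrete, $\cC(\Z) = l^{\infty}$, and that $\cW$ is $m$-admissible, so the $\cW$-compactification exists and is determined up to isomorphism (following \cite{bjm}) by being a semigroup compactification $(\psi,X)$ of $\Z$ with $\psi^{\ast}(\cC(X)) = \cW$. Thus it suffices to check: (a) $\cH$ is a compact right topological semigroup; (b) $\tau$ is a continuous homomorphism whose image is dense in $\cH$ and contained in the topological centre $\Lambda(\cH)$; and (c) the adjoint $\tau^{\ast} \colon \cC(\cH) \to l^{\infty}$ satisfies $\tau^{\ast}(\cC(\cH)) = \cW$.

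Properties (a) and (b) follow by collecting earlier results. By Theorem~\ref{sec:main}, $(\cH,\star)$ is topologically isomorphic to $E(\T^{\Z_+},t)$, which is a compact right topological group, giving (a). For (b), continuity is automatic because $\Z$ is discrete; that $\tau$ is a homomorphism follows since $\cE(\wt{m}) \cE(\wt{n}) = t^m t^n = t^{m+n} = \cE(\wt{m+n})$ and $\cE$ is an isomorphism, whence $\wt{m} \star \wt{n} = \wt{m+n}$ (injectivity being clear, as $\wt{n}_1 = n^{\times}$). Density of $\tau(\Z) = \set{\wt{n}}{n \in \Z}$ is immediate from Lemma~\ref{sec:horbit}(i), which even gives density of the positive part $\set{\wt{n}}{n \in \N}$. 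Finally, Theorem~\ref{sec:topcentr} identifies $\Lambda(\cH) = \set{\wt{n}}{n \in \Z} = \tau(\Z)$, so the image lies in (indeed equals) the topological centre.

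The crux is property (c), and here the substantive work has already been carried out in Theorem~\ref{sec:hweyl}. The decisive observation is that $\tau^{\ast}$ coincides with $\omega_{\wt{0}}$: for any $f \in \cC(\cH)$ and $n \in \Z$, using $\wt{n} = T^n \wt{0}$, we get $\tau^{\ast}(f)(n) = f(\tau(n)) = f(\wt{n}) = f(T^n \wt{0}) = \omega_{\wt{0}}(f)(n)$. Hence $\tau^{\ast}(\cC(\cH)) = \omega_{\wt{0}}(\cC(\cH)) = \cW$ by Theorem~\ref{sec:hweyl}, establishing (c) and completing the identification.

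I expect no serious obstacle: the corollary is essentially bookkeeping resting on the earlier computations. The only points requiring care are to state the defining properties of a $\cW$-compactification in a form matching \cite{bjm}, and to make the small but pivotal identification $\tau^{\ast} = \omega_{\wt{0}}$ that routes everything through Theorem~\ref{sec:hweyl}.
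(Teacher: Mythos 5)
Your proposal is correct and follows essentially the same route as the paper: the paper's proof consists precisely of the observation $\tau^{\ast}\cC(\cH) = \omega_{\wt{0}}\cC(\cH) = \cW$ (your step (c)), with the remaining compactification properties (compact right topological group structure, density of $\tau(\Z)$, containment in $\Lambda(\cH)$) left implicit as consequences of Theorem~\ref{sec:main}, Lemma~\ref{sec:horbit} and Theorem~\ref{sec:topcentr}, exactly the results you cite. Your write-up merely makes that bookkeeping explicit.
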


The proof follows from noting that $\tau^{\ast} \cC(\cH) = \omega_{\wt{0}}\cC(\cH) = \cW$. Recall that a $\cW$-compactification of $\Z$ is any pair $(\alpha,X)$ where $X$ is a compact right topological semigroup (necessarily a group in this case) and $\alpha \colon \Z \to X$ is a homomorphism such that $\alpha(\Z)$ is dense in $X$, $\alpha(\Z) \subseteq \Lambda(X)$, and $\alpha^{\ast}\cC(X) = \cW$.

Keeping in mind the representation theory of point transitive systems, the universal minimal system with quasi-discrete spectrum is obtained by combining Theorems \ref{sec:qdsweyl} and \ref{sec:hweyl}:

\begin{cor}
\label{sec:universal}
The system $(\cH,T)$ is the universal minimal system with quasi-discrete spectrum.
\end{cor}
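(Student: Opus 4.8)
The plan is to read off the corollary from the two preceding theorems via the representation theory of point transitive systems recalled in the preliminaries. By definition I must check two things: that $(\cH,T)$ is itself a minimal system with quasi-discrete spectrum, and that every minimal system with quasi-discrete spectrum is a factor of $(\cH,T)$.

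The first point is already in hand. Since $(\cH,T)$ is isomorphic (Theorem~\ref{sec:main}) to the enveloping semigroup system $(E(\T^{\Z_+},t),\lambda_t)$ of the distal system $(\T^{\Z_+},t)$, it is distal and therefore minimal, and $\wt{0}$ is a transitive point because $\cH$ is precisely its orbit closure by Lemma~\ref{sec:horbit}. That $(\cH,T)$ has quasi-discrete spectrum is the theorem stated immediately before Theorem~\ref{sec:hweyl}. So it remains only to establish the universal property.

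For this, I would take an arbitrary minimal system $(X,s)$ with quasi-discrete spectrum and fix any $x \in X$; minimality makes $x$ transitive. Theorem~\ref{sec:qdsweyl} then gives $\omega_x\cC(X) \subseteq \cW$, while Theorem~\ref{sec:hweyl} identifies $\omega_{\wt{0}}\cC(\cH) = \cW$, so that $\omega_x\cC(X) \subseteq \omega_{\wt{0}}\cC(\cH)$. Applying the adaptation of Proposition 5.12 of \cite{devries} recalled in the preliminaries, with $\cH$ as the source and $X$ as the target (both having the required transitive points $\wt{0}$ and $x$), yields a homomorphism $\pi \colon \cH \to X$ with $\pi(\wt{0}) = x$. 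Hence $(X,s)$ is a factor of $(\cH,T)$, which is exactly the universality claim.

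There is no genuine obstacle here: the substantive content lies in Theorems~\ref{sec:qdsweyl} and \ref{sec:hweyl}, and the corollary is an assembly of those results. The one point that requires care is the direction of the inclusion in the representation theorem — one must take $\cH$, whose associated algebra is the larger space $\cW$, as the \emph{domain}, so that the induced map runs from $\cH$ onto $X$ and not the other way around.
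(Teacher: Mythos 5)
Your proof is correct and follows exactly the route the paper intends: combine Theorem~\ref{sec:qdsweyl} and Theorem~\ref{sec:hweyl} via the representation theory of point transitive systems (the adapted Proposition 5.12 of \cite{devries}), with minimality and quasi-discrete spectrum of $(\cH,T)$ itself supplied by Theorem~\ref{sec:main}, Lemma~\ref{sec:horbit}, and the theorem preceding Theorem~\ref{sec:hweyl}. Your closing remark about the direction of the inclusion is precisely the one point of care, and you resolved it correctly.
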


\begin{rems}
In \cite{brown69}, it is shown that all minimal systems with quasi-discrete spectrum are factors of (what is essentially) $(\{\id_{\T}\} \times H(\T)^{\N},T)$. This is not a minimal system in itself, but $(\cH,T)$ is its factor (in addition to being a subsystem). We can construct a homomorphism $\pi \colon \{\id_{\T}\} \times H(\T)^{\N} \to \cH$ so that, for any $\phi \in \{\id_{\T}\} \times H(\T)^{\N}$, the element $\pi(\phi) \in \cH$ is uniquely determined by the properties $\pi(\phi)_1 = \phi_1$ and $\pi(\phi)_k(\exx{(b/n!)}) = \phi_k(\exx{(b/n!)})$ for all $k, n \in \N$ and $b \in B$ (the set $B$ is the basis for $\R$ over $\Q$ without $1$).

Note that we do not need the whole group $G(\cH,T)$ to separate the points of $\cH$. Equivalently, we do not need the whole group $\cP$ to generate $\cW$. For example, we could consider $\cP^{\prime} = \cP_1 \cup \cR$ where $\cR$ stands for the set of those functions $f \in \cP$ whose corresponding polynomial $\theta_0 + \sum_{k = 1}^m \theta_k n^k$ has at least one irrational coefficient $\theta_k$, $k \neq 0$. Then, $\lspan \cP^{\prime}$ is dense in $\cW$. This follows from the fact that the functions of $\cP \setminus \cR$ are periodic and, therefore, already included in $\mathcal{AP}(\Z) = \ov{\lspan \cP_1}$. This redundancy is reflected in the condition \eqref{eq:H.inf}.
\end{rems}

\begin{thm}
\label{sec:uew}
Any factor of $(\cH,T)$ is uniquely ergodic. In particular, any minimal system with quasi-discrete spectrum is uniquely ergodic.
\end{thm}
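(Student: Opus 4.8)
The plan is to prove unique ergodicity of the universal system $(\cH,T)$ itself and then descend to factors. Once $(\cH,T)$ is known to be uniquely ergodic, the remark following Theorem~\ref{sec:ueambit} (factors of minimal uniquely ergodic systems are uniquely ergodic) gives the first assertion, since $(\cH,T)$ is minimal by Corollary~\ref{sec:universal}. The ``in particular'' clause is then immediate: every minimal system with quasi-discrete spectrum is a factor of $(\cH,T)$ by the universality in Corollary~\ref{sec:universal}, hence uniquely ergodic.

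To establish that $(\cH,T)$ is uniquely ergodic I would apply Theorem~\ref{sec:ueambit} at the transitive point $\wt{0}$, whose positive semi-orbit $\set{\wt{n}}{n \in \N}$ is dense in $\cH$ by Lemma~\ref{sec:horbit}(i). For the family $F$ I take the group of quasi-eigenfunctions $G(\cH,T) = q(V)$. Its linear span is dense in $\cC(\cH)$ precisely because $(\cH,T)$ has quasi-discrete spectrum, so the density hypothesis of Theorem~\ref{sec:ueambit} is free. It then remains only to verify the uniform-averaging condition for each $f \in \omega_{\wt{0}} q(V)$.

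The heart of the argument is the recognition that these pullbacks are of polynomial type. For $v = (v_0,\ldots,v_m,1,1,\ldots) \in V$ with $v_k = \exx{\beta_k}$, the formula $q(v)(\wt{n}) = \prod_{k=0}^m \binom{n}{k}^{\times}(v_k)$ established earlier yields
	\[\omega_{\wt{0}} q(v)(n) = q(v)(\wt{n}) = \exx{\sum_{k=0}^m \beta_k \binom{n}{k}},\]
which lies in $\cP_m$ (consistent with $\omega_{\wt{0}}\cC(\cH) = \cW$ from Theorem~\ref{sec:hweyl}). Theorem~\ref{sec:poly} then supplies, for each such $f$, a constant to which the averages $(1/N)\sum_{n=1}^N f(n+k)$ converge uniformly in $k \in \Z_+$. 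This verifies the hypothesis of Theorem~\ref{sec:ueambit} and shows $(\cH,T)$ is uniquely ergodic.

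I do not anticipate a genuine obstacle, as every step reduces to a result already in hand. The only point needing mild care is matching the two shapes of the Cesàro average: Theorem~\ref{sec:poly} is stated with $\sum_{n=1}^N$ while Theorem~\ref{sec:ueambit} uses $\sum_{n=0}^{N-1}$. These differ by boundary terms whose contribution is at most $2\|f\|_\infty/N$, vanishing as $N \to \infty$ uniformly in $k$ since each $f$ is bounded, so the two averages share the same uniform limit. The conceptual content, and the step I would highlight, is the passage from the abstract quasi-eigenfunctions $q(v)$ to concrete polynomial exponentials via the explicit value of $q(v)(\wt{n})$, which places the whole verification within the scope of Weyl's equidistribution theorem as packaged in Theorem~\ref{sec:poly}.
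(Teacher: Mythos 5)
Your proposal is correct and takes essentially the same approach as the paper, whose proof is precisely the combination of Theorems~\ref{sec:ueambit}, \ref{sec:poly} and \ref{sec:hweyl}: verify the uniform-averaging criterion at the transitive point $\wt{0}$ for a dense family of functions whose pullbacks are polynomial-type sequences, then pass to factors via minimality. Your only cosmetic deviation is taking $F = q(V)$ with density supplied by the quasi-discrete spectrum of $(\cH,T)$ instead of citing Theorem~\ref{sec:hweyl} directly, but the explicit computation $\omega_{\wt{0}}q(v)(n) = \exx{\sum_k \beta_k \binom{n}{k}}$ you rely on is the same one underlying that theorem.
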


The proof of this theorem is a straightforward consequence of Theorems~\ref{sec:ueambit}, \ref{sec:poly} and \ref{sec:hweyl}. Special cases of it appear in \cite{hahnparry65} and \cite{salehi91}.


\section{Factors of $(\cH,T)$}

The study of the factors of $(\cH,T)$ reduces to the study of certain subgroups of $\cH$ since the latter can be identified with $E(\cH,T)$ via the mapping $\phi \mapsto \lambda_\phi : \cH \to E(\cH,T)$, which is an isomorphism of groups and of flows. More precisely, given a factor $(X,s)$ of $(\cH,T)$ and a homomorphism $\pi \colon \cH \to X$, we can define an action of $\cH$ on $X$ by $\phi x = \wt{\pi}(\lambda_\phi)x$ for all $\phi \in \cH$ and $x \in X$, and letting $x_0 = \pi(\wt{0})$, we obtain a subgroup $\cG$ of $\cH$ defined by
	\[\cG = \set{\phi \in \cH}{\phi x_0 = x_0}.
\]
This group coincides with the preimage $\pi^{-1}(x_0)$, and more generally, $\pi^{-1}(\phi x_0) = \phi \star \cG$ for any $\phi \in \cH$. The question whether $(X,s)$ has quasi-discrete spectrum or not can be answered in a somewhat obvious way in terms of the quasi-eigenfunctions of $(\cH,T)$ and the group $\cG$:

\begin{prop}
Let $(X,s)$, $\pi \colon \cH \to X$, $x_0$ and $\cG$ be as above. Then, $(X,s)$ has quasi-discrete spectrum if and only if there exists a subgroup $F$ of $G(\cH,T)$ with the property that, for any $\phi, \psi \in \cH$, the identity $f(\phi) = f(\psi)$ obtains for all $f \in F$ if and only if $\phi^{-1} \star \psi \in \cG$.
\end{prop}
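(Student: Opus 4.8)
The plan is to take the natural candidate $F = \pi^{\ast} G(X,s)$ and to observe that the biconditional in the statement decodes, via the correspondence between points of $X$ and $\cG$-cosets, into the single assertion that the fibre-constant quasi-eigenfunctions of $\cH$ separate the fibres of $\pi$.

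First I would record two preliminary facts. Since $\pi \circ T = s \circ \pi$, we get $T^{\ast} \circ \pi^{\ast} = \pi^{\ast} \circ s^{\ast}$, and a routine induction on $m$ shows $\pi^{\ast} G_m(X,s) \subseteq G_m(\cH, T)$; because $\pi$ is surjective, $\pi^{\ast}$ is an injective group homomorphism, so $F = \pi^{\ast} G(X,s)$ is a subgroup of $G(\cH, T)$. Second, from the discussion preceding the proposition we have $\pi(\phi) = \phi x_0$ and $\pi^{-1}(\phi x_0) = \phi \star \cG$, so for $\phi, \psi \in \cH$, the equality $\pi(\phi) = \pi(\psi)$ holds if and only if $\phi^{-1} \star \psi \in \cG$. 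With this I would read the two implications in the defining property of $F$ as follows: ``$\phi^{-1}\star\psi \in \cG$ implies $f(\phi) = f(\psi)$ for all $f \in F$'' expresses exactly that every $f \in F$ is constant on the $\pi$-fibres, while the reverse implication expresses exactly that $F$ separates those fibres.

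For the forward direction I would assume quasi-discrete spectrum and verify that $F = \pi^{\ast} G(X,s)$ works. Each $f = \pi^{\ast} f'$ satisfies $f(\phi) = f'(\pi(\phi))$ and is therefore constant on fibres, giving one implication; for the other, if $f'(\pi(\phi)) = f'(\pi(\psi))$ for all $f' \in G(X,s)$, then because quasi-discrete spectrum means $G(X,s)$ separates the points of $X$, we get $\pi(\phi) = \pi(\psi)$, i.e. $\phi^{-1}\star\psi \in \cG$. For the converse I would start from an arbitrary $F$ with the stated property: the first implication forces every $f \in F$ to be constant on the $\pi$-fibres, and then Proposition~\ref{sec:qeffactor} lets me write each such $f$ as $\pi^{\ast} f'$ with $f' \in G(X,s)$, so that $F = \pi^{\ast} F'$ for some $F' \subseteq G(X,s)$. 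The second implication then says $F'$ separates the points of $X$, whence $G(X,s)$ does too and $(X,s)$ has quasi-discrete spectrum.

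The real content sits in the converse, and specifically in the step that pins down $F \subseteq \pi^{\ast} G(X,s)$: this is where Proposition~\ref{sec:qeffactor} is essential, since it is what guarantees that a fibre-constant quasi-eigenfunction of $\cH$ is genuinely pulled back from a quasi-eigenfunction of $X$ rather than merely being fibre-constant by accident. Everything else is a translation between ``$G(X,s)$ separates the points of $X$'' and ``$\pi^{\ast} G(X,s)$ separates the $\pi$-fibres,'' which is immediate from the surjectivity of $\pi$.
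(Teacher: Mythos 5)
Your proposal is correct and follows essentially the same route as the paper's own proof: the forward direction takes $F = \pi^{\ast}G(X,s)$ and uses the equivalence $\pi(\phi)=\pi(\psi) \iff \phi^{-1}\star\psi\in\cG$, and the converse uses Proposition~\ref{sec:qeffactor} to realise a fibre-constant $F$ as $\pi^{\ast}$ of a point-separating subgroup of $G(X,s)$. Your additional verification that $\pi^{\ast}G(X,s)$ is indeed a subgroup of $G(\cH,T)$ is a detail the paper leaves implicit, but it is not a different argument.
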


\begin{proof}
If $(X,s)$ has quasi-discrete spectrum, then we can put $F = \pi^{\ast} G(X,s)$. The functions of $F$ are constant on the $\pi$-fibres, i.e., $f(\phi) = f(\psi)$ whenever $f \in F$ and $\phi, \psi \in \cH$ are such that $\pi(\phi) = \pi(\psi)$ or, equivalently, $\phi^{-1} \star \psi \in \cG$. If $\phi, \psi \in \cH$ are such that $\phi^{-1} \star \psi \notin \cG$, then $\pi(\phi) \neq \pi(\psi)$, and we can find $f \in F$ with $\pi^{\ast}f(\phi) \neq \pi^{\ast}f(\psi)$.

For the converse, suppose that there exists a subgroup $F$ of $G(\cH,T)$ with the property stated in the claim. Then, we can find a group $H \subseteq G(X,s)$ so that $\pi^{\ast}H = F$ (Proposition~\ref{sec:qeffactor}), and $H$ separates points. In particular, the quasi-eigenfunctions of $(X,s)$ separate points, and $(X,s)$ has quasi-discrete spectrum.
\end{proof}

Therefore, to find an example of a factor of $(\cH,T)$ that does not have quasi-discrete spectrum, we must find a subgroup $\cG$ of $\cH$ so that $\cH/\cG$ is Hausdorff and with the property that the cosets in $\cH/\cG$ cannot be separated by those quasi-eigenfunctions of $(\cH,T)$ which are constant on the cosets. (Recall that, if $\cG$ is a subgroup of $\cH$ for which $\cH/\cG$ is Hausdorff, the latter is the phase space of a factor of $(\cH,T)$.)

We start by fixing some $x \in \T \setminus \T_Q$. Define $\cG \subseteq \cG_1 \subseteq \cH$ as follows:
\begin{align*}
\cG_1 &= \set{\phi \in \cH}{\phi_1(x^2) = \phi_1(y) = 1 \text{ for all } y \in \T_Q}, \\
\cG &= \set{\phi \in \cG_1}{\phi_2(x) = 1}.
\end{align*}
The set $\cG$ contains at least the identity element $\wt{0}$, so both $\cG$ and $\cG_1$ are non-empty. They are closed under the operation $\star$. Indeed, if $\phi, \psi \in \cG_1$ and $y \in \T_Q$, then
\begin{align*}
(\phi \star \psi)_1(x^2) &= \phi_1(x^2) \psi_1(x^2) = 1, \\
(\phi \star \psi)_1(y) &= \phi_1(y) \psi_1(y) = 1.
\end{align*}
If $\phi, \psi \in \cG$, then $\psi_1(x)^2 = 1$, so $\psi_1(x) \in \T_2 = \{1,-1\}$ and $\phi_1(\psi_1(x)) = 1$, implying that
	\[(\phi \star \psi)_2(x) = \phi_2(x) \psi_2(x) \phi_1(\psi_1(x)) = 1,
\]
as claimed. It is easy to check that, if $\psi \in \cH$, then $(\psi^{-1})_1 = \ov{\psi_1}$ and $(\psi^{-1})_2 = \ov{\psi_2}(\psi_1 \circ \psi_1)$. Thus, given $\phi \in \cG_1$, $\psi \in \cG$ and any $y \in \T_Q$,
\begin{align*}
(\phi^{-1})_1(x^2) &= \ov{\phi_1(x^2)} = 1, \\
(\phi^{-1})_1(y) &= \ov{\phi_1(y)} = 1, \\
(\psi^{-1})_2(x) &= \ov{\psi_2(x)}\psi_1(\psi_1(x)) = 1.
\end{align*}
These identities show that $\phi^{-1} \in \cG_1$ and $\psi^{-1} \in \cG$. In conclusion, $\cG$ and $\cG_1$ are subgroups of $\cH$.

Clearly, both $\cG$ and $\cG_1$ are closed, but we intend to show that they have the stronger property of $\cH/\cG$ and $\cH/\cG_1$ being Hausdorff, i.e., that the quotient relations are closed. Let $R(\cG) \subseteq R(\cG_1) \subseteq \cH \times \cH$ denote these relations, so $(\phi, \psi) \in R(\cG)$ if and only if $\phi^{-1} \star \psi \in \cG$, and $(\phi,\psi) \in R(\cG_1)$ if and only if $\phi^{-1} \star \psi \in \cG_1$. Equivalently, if $\phi,\psi \in \cH$, then $(\phi,\psi) \in R(\cG_1)$ if and only if the conditions
\begin{align*}
\phi_1(x^2) &= \psi_1(x^2), \tag{A} \label{eq:A} \\
\phi_1(y) &= \psi_1(y) \text{ for all } y \in \T_Q \tag{B} \label{eq:B}
\end{align*}
are in effect, and $(\phi,\psi) \in R(\cG)$ if and only if $(\phi,\psi) \in R(\cG_1)$ and also
\begin{equation*}
\phi_2(x)\phi_1(\psi_1(x)) = \psi_2(x)\phi_1(\phi_1(x)) \tag{C} \label{eq:C}.
\end{equation*}
Condition~\eqref{eq:C} appears asymmetric at first glance, but \eqref{eq:A} implies that $\ov{\phi_1} \psi_1 (x) \in \T_2$, so applying \eqref{eq:B} we get $\ov{\phi_1} \psi_1(\ov{\phi_1} \psi_1(x)) = 1$. This can be used to show that the roles of $\phi$ and $\psi$ are interchangeable in \eqref{eq:C}. The relation $R(\cG_1)$ is clearly closed. To prove that $R(\cG)$ is closed, we introduce a function $\alpha \colon R(\cG_1) \to \T_2$, $\alpha(\phi,\psi) = \phi_1(\phi_1(x) \ov{\psi_1(x)})$ for all $(\phi,\psi) \in R(\cG_1)$. Note that \eqref{eq:A} implies that $\phi_1(x) \ov{\psi_1(x)} \in \T_2$ for all $(\phi,\psi) \in R(\cG_1)$, so the range of $\alpha$ is indeed in $\T_2$. The function $\alpha$ is continuous: if $(\phi_\lambda,\psi_\lambda) \conv{\lambda} (\phi,\psi)$ in $R(\cG_1)$, then we can find $\lambda_0$ so that $(\phi_\lambda)_1(x) \ov{(\psi_\lambda)_1(x)} = \phi_1(x) \ov{\psi_1(x)}$ for $\lambda \geq \lambda_0$, and we can also find $\lambda_1$ so that $(\phi_\lambda)_1(-1) = \phi_1(-1)$ for $\lambda \geq \lambda_1$, and therefore $\alpha(\phi_\lambda,\psi_\lambda) = \alpha(\phi,\psi)$ for all $\lambda \geq \lambda_0, \lambda_1$. Condition \eqref{eq:C} can be stated equivalently as
\begin{equation*}
\phi_2(x) = \psi_2(x) \alpha(\phi,\psi), \tag{C$^{\prime}$} \label{eq:Cprime}
\end{equation*}
$(\phi,\psi) \in R(\cG_1)$. It is now clear that $R(\cG)$ is also closed.

It remains to show that the quasi-eigenfunctions of $(\cH,T)$ that are constant on the cosets of $\cH/\cG$ do not separate these cosets. Consider any quasi-eigenfunction $q(v)$ of $(\cH,T)$, $v \in V$, for which $q(v)(\phi) = q(v)(\psi)$ for all $(\phi,\psi) \in R(\cG)$ or, equivalently, $q(v)(\phi) = q(v)(\phi \star \psi)$ for all $\phi \in \cH$ and $\psi \in \cG$. Suppose that $v_k = 1$ for all $k > n$ for some $n \in \N$. Now, for any $\phi \in \cH$ and $\psi \in \cG$,
\begin{align*}
\prod_{k = 0}^n \phi_k(v_k) &= q(v)(\phi) = q(v)(\phi \star \psi) = \prod_{k = 0}^n (\phi \star \psi)_k(v_k) = \prod_{k = 0}^n \prod_{j = 0}^k \phi_{k-j}(\psi_j(v_k)) \\
&= \left[ \prod_{k = 0}^n \phi_k(v_k) \right] \left[ \prod_{k = 1}^n \prod_{j = 1}^k \phi_{k-j}(\psi_j(v_k)) \right],
\end{align*}
and cancelling the common terms,
\begin{equation}
\prod_{k = 1}^n \prod_{j = 1}^k \phi_{k-j}(\psi_j(v_k)) = 1. \label{eq:reduction}
\end{equation}
The substitution $\phi = \wt{0}$ produces $\prod_{k = 1}^n \psi_k(v_k) = 1$ for all $\psi \in \cG$. As a consequence, $v_k \in \T_Q$ for all $k \geq 3$ since, for any $y \in \T \setminus \T_Q$, $\phi_k(y)$ can attain any prescribed value in $\T$ for a suitably chosen $\psi \in \cG$ when $k \geq 3$. Also, since $\psi_k(y) = 1$ for all $\psi \in \cG$, $k \in \N$ and $y \in \T_Q$ (as follows from the condition \eqref{eq:H.inf} in conjunction with the identity $\psi_1(y) = 1$, $\psi \in \cG$, $y \in \T_Q$), we get $\psi_1(v_1) \psi_2(v_2) = 1$ for all $\psi \in \cG$. Taking these observations into account, the substitution $\phi = \wt{1}$ in \eqref{eq:reduction} yields $\psi_1(v_2) = 1$ for all $\psi \in \cG$. We infer that $v_2 = x^{2q}z$ for some $q \in \Z$ and $z \in \T_Q$; otherwise, we could simply pick some $\xi \in H(\T)$ so that $\xi(v_2) \neq 1$ and then adapt the extension techniques from the proof of Lemma~\ref{sec:horbit} to find $\psi \in \cG$ with $\psi_1 = \xi$, resulting in a contradiction. Thus, $\psi_2(v_2) = 1$ and also $\psi_1(v_1) = 1$ for all $\psi \in \cG$. Again, $v_1 = x^{2p}w$ for some $p \in \Z$ and $w \in \T_Q$. For all $\psi \in \cG$, we have $q(v)(\psi) = v_0$. It is evident that any $v \in V$ satisfying the discussed conditions for $v_1$ and $v_2$ also satisfies \eqref{eq:reduction} for any $\phi \in \cH$ and $\psi \in \cG$, so the resulting quasi-eigenfunction $q(v)$ is constant on the cosets of $\cH/\cG$.

Pick any $\phi \in \cH$ such that $\phi_1 = 0^{\times}$ and $\phi_2(x) = -1$. Note that $\phi_k(y) = 1$ for all $k \in \N$ and $y \in \T_Q$ by \eqref{eq:H.inf}. Now, $\phi \notin \cG$, so the cosets $\wt{0} \star \cG$ and $\phi \star \cG$ are distinct. However, if $v \in V$ is as above,
	\[q(v)(\phi) = v_0 \phi_1(v_1) \phi_2(v_2) = v_0 (-1)^{2q} = v_0 = q(v)(\psi)
\]
for any $\psi \in \cG$. In other words, the cosets $\wt{0} \star \cG$ and $\phi \star \cG$ cannot be separated by quasi-eigenfunctions that are constant on all cosets of $\cH/\cG$. The construction is thus complete.

\begin{rems}
The fact that the group $\cG$ produces a Hausdorff quotient $\cH/\cG$ follows also from the fact that $\cG$ is $\tau$-closed in $\cH$, and showing that $\cG$ has this property is marginally easier than the direct proof presented above. Recall that the $\tau$-closedness of a set $K \subseteq \cH$ means that, if $(n_\lambda)$ is a net in $\Z$ so that $\epsilon(n_\lambda) \conv{\lambda} u$ for some fixed minimal idempotent $u \in \beta \Z$ (the mapping $\epsilon \colon \Z \to \beta \Z$ being the usual homomorphism of the integers to the Stone-\v{C}ech compactification) and if $(\kappa_\lambda)$ is a net in $K$ so that $T^{n_\lambda} \kappa_\lambda \conv{\lambda} \kappa$ in $\cH$, then $\kappa \in K$. In general, given a minimal distal system $(X,s)$ (or a minimal distal flow with a topological group action) and some $\tau$-closed subgroup $\cG$ of $E(X,s)$, the quotient $E(X,s)/\cG$ is Hausdorff and, therefore, the phase space of a factor of $(E(X,s),\lambda_s)$. The reader may consult \cite{auslander} or \cite{devries} for more information on the $\tau$-topology, but we shall avoid the explicit use of this concept.

Another point to be made about this example is that the space $\cH/\cG$ is not connected. The condition \eqref{eq:B} is the cause of this. Therefore, the factor thus obtained is not totally minimal: if $s\colon \cH/\cG \to \cH/\cG$ is the homeomorphism for which $(\cH/\cG,s)$ is a factor of $(\cH,T)$ via the quotient map, then the system $(\cH/\cG,s^n)$ is not minimal for any $n \geq 2$. The corresponding concept in the measure-theoretic setting is total ergodicity, and as mentioned in the introduction, the factors of a totally ergodic measure preserving transformation on a standard probability space with (measure-theoretic) quasi-discrete spectrum inherit this property. It is not clear whether the example provided here has quasi-discrete spectrum in the measure-theoretic sense with respect to its unique invariant measure, and the question goes beyond the purely topological programme of this paper.
\end{rems}

We now turn to the problem of finding the Ellis groups of factors of $(\cH,T)$. So suppose that $(X,s)$ is some factor of $(\cH,T)$, and let $\pi \colon \cH \to X$ be a homomorphism. Let $x_0 = \pi(\wt{0})$, and let $\cG$ be defined with respect to $x_0$ as in the beginning of this section. If we identify $\cH$ with $E(\cH,T)$ in the canonical way, the induced homomorphism corresponding to $\pi$ can be defined as a mapping $\wt{\pi} \colon \cH \to E(X,s)$, $\wt{\pi}(\phi)(x) = \phi x$, $\phi \in \cH$, $x \in X$. Let $\cK$ be the kernel of $\wt{\pi}$, that is,
	\[\cK = \set{\phi \in \cH}{\phi x = x \text{ for all } x \in X}.
\]
Another way of describing $\cK$ is provided by the identification of $X$ with $\cH/\cG$:
	\[\cK = \set{\phi \in \cH}{\phi \star \psi \star \cG = \psi \star \cG \text{ for all } \psi \in \cH}.
\]
The Ellis group $E(X,s)$ is topologically isomorphic with the quotient $\cH/\cK$. Obviously, $\cK$ is normal and contained in $\cG$. We intend to show that $(E(X,s),\lambda_s)$ has quasi-discrete spectrum or, equivalently, that the quasi-eigenfunctions of $\cH$ that are constant on the cosets of $\cH/\cK$ separate these cosets. What is required is a sufficiently detailed description of $\cK$.

Recall that, for any $m \in \N$, we have defined a subgroup $\cH_{m,1}$ of $\cH_m$,
	\[\cH_{m,1} = \set{\phi \in \cH_m}{\phi_k = \wt{0}_k \text{ for all } 0 \leq k \leq m-1}.
\]
This is closed, abelian and normal in $\cH_m$. 

\begin{lem}
\label{sec:normalextend}
Let $\cN$ be a closed, normal subgroup of $\cH$. If $\rho_m(\cN)$ contains $\cH_{m,1}$ for some $m \in \N$, then $\rho_n(\cN)$ contains $\cH_{n,1}$ for every $n \geq m$.
\end{lem}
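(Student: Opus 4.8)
The plan is to reduce the lemma to the single inductive step from $m$ to $m+1$: if $\rho_m(\cN) \supseteq \cH_{m,1}$, then $\rho_{m+1}(\cN) \supseteq \cH_{m+1,1}$. Granting this, applying it repeatedly (with $m+1, m+2, \ldots$ in place of $m$) yields $\rho_n(\cN) \supseteq \cH_{n,1}$ for every $n \geq m$. Throughout I use that $\rho_{m+1}$ is a continuous group homomorphism, so $\rho_{m+1}(\cN)$ is a compact, hence closed, subgroup of $\cH_{m+1}$; and I use the explicit description of $\cH_{m+1,1}$. Unwinding \eqref{eq:H.inf} shows that $\phi \in \cH_{m+1,1}$ exactly when $\phi_0 = 1^{\times}$, $\phi_j = 0^{\times}$ for $1 \leq j \leq m$, and $\phi_{m+1}$ is an arbitrary element of $A := \set{c \in H(\T)}{c|_{\T_Q} = 1}$, with the group law reducing to pointwise multiplication in the top coordinate. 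Thus $\cH_{m+1,1}$ is topologically isomorphic to $A \cong H(\T/\T_Q)$, and it suffices to realise a dense subgroup of $A$ as $(m+1)$-th coordinates of elements of $\rho_{m+1}(\cN) \cap \cH_{m+1,1}$.

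The engine is the commutator. Given $\nu \in \cN$ with $\rho_m(\nu) \in \cH_{m,1}$ (so $\nu_j = \wt{0}_j$ for $0 \leq j \leq m-1$, and $\nu_m \in A$ is then free) and any $\psi \in \cH$, set $[\nu,\psi] = \nu^{-1} \star \psi^{-1} \star \nu \star \psi$. Since $\cN$ is normal, $\psi^{-1} \star \nu \star \psi \in \cN$, whence $[\nu,\psi] \in \cN$. Applying Proposition~\ref{sec:commutator} with $k = m-1$ gives $[\nu,\psi]_j = 0^{\times}$ for $1 \leq j \leq m$ and $[\nu,\psi]_{m+1} = \ov{(\psi_1 \circ \nu_m)}(\nu_m \circ \psi_1)$; in particular $\rho_{m+1}([\nu,\psi]) \in \cH_{m+1,1}$ with this last expression as its top coordinate. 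The hypothesis $\rho_m(\cN) \supseteq \cH_{m,1}$ lets me choose $\nu$ with $\nu_m$ equal to any prescribed $a \in A$, and Lemma~\ref{sec:horbit}(ii) lets $\psi_1$ be any prescribed $b \in H(\T)$. Hence the subgroup generated by $S = \set{\ov{b \circ a}(a \circ b)}{a \in A,\ b \in H(\T)}$ lies in the set of top coordinates arising from $\rho_{m+1}(\cN) \cap \cH_{m+1,1}$.

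It remains to prove that $\langle S\rangle$ is dense in $A$; since $\rho_{m+1}(\cN)$ is closed and $\cH_{m+1,1}$ is the closure of the corresponding subgroup, density upgrades to $\cH_{m+1,1} \subseteq \rho_{m+1}(\cN)$, completing the induction. Density in the compact abelian group $A \cong \wh{(\T/\T_Q)_{\mathrm{disc}}}$ is equivalent, by Pontryagin duality, to the vanishing of the annihilator of $S$, that is, to the assertion that for every $\xi \in \T \setminus \T_Q$ there are $a \in A$ and $b \in H(\T)$ with $\ov{b(a(\xi))}\,a(b(\xi)) \neq 1$. This is where the abundance of discontinuous homomorphisms enters, and it is the main obstacle. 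I will exploit the Hamel basis $B$ of $\R$ over $\Q$ as in Lemma~\ref{sec:horbit}: writing $\xi = \exx{\theta}$ with $\theta$ having a nonzero coefficient on some $b_* \in B$, I pick a further basis element $b_1 \in B$ off the support of $\theta$, and build $a \in A$ supported so that $a(\xi)$ is a nontrivial power of $\exx{b_1}$, together with $b \in H(\T)$ that kills $\xi$ but sends $\exx{b_1}$ to an irrational point.

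With these choices $a(b(\xi)) = a(1) = 1$ while $b(a(\xi)) \notin \T_Q$, so $\ov{b(a(\xi))}\,a(b(\xi))$ is a nontrivial element of $\T$, as required. The existence of $a$ and $b$ is the routine extension of a $\Q$-linear prescription on the divisible group $\T/\T_Q$ to homomorphisms into $\T$; once this is in place, the density, and hence the lemma, follows.
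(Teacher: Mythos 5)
Your proof is correct and follows essentially the same route as the paper's: the same reduction to the single inductive step, the same commutator engine via Proposition~\ref{sec:commutator} (with the hypothesis supplying $\nu_m$ and Lemma~\ref{sec:horbit}(ii) supplying $\psi_1$), and the same duality reduction to exhibiting, for each $\xi \in \T \setminus \T_Q$, homomorphisms $a$ (trivial on $\T_Q$) and $b$ with $\ov{b(a(\xi))}\,a(b(\xi)) \neq 1$. The only difference is in the choice of witnesses, and it is immaterial: the paper takes $\alpha(\xi)=\xi$, $\alpha|_{\T_Q}=1$, $\beta(\xi)=-1$, while you route $\xi$ through a fresh Hamel-basis element; both constructions rest on the same extension-by-divisibility principle.
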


\begin{proof}
It suffices to prove that the inclusion $\cH_{m,1} \subseteq \rho_m(\cN)$, $m \in \N$, implies $\cH_{m+1,1} \subseteq \rho_{m+1}(\cN)$. So suppose that the former holds. By normality, given any $\phi \in \cN$ and $\psi \in \cH$, the commutator $[\phi,\psi] = \phi^{-1} \star \psi^{-1} \star \phi \star \psi$ is in $\cN$, so $\rho_{m+1}([\phi,\psi]) \in \rho_{m+1}(\cN)$. If we choose $\phi \in \cN$ so that $\rho_m(\phi) \in \cH_{m,1}$, we can use Proposition~\ref{sec:commutator} with $k = m-1$ to see that $[\phi,\psi]_j = 0^{\times}$ for all $1 \leq j \leq m$ and $[\phi,\psi]_{m+1} = \ov{(\psi_1 \circ \phi_m)}(\phi_m \circ \psi_1)$. In this case, $\rho_{m+1}([\phi,\psi]) \in \cH_{m+1,1}$ for all $\psi \in \cH$.

Let $\cN_0 = \cH_{m+1,1} \cap \rho_{m+1}(\cN)$, so $\cN_0$ is the closed, abelian subgroup of $\rho_{m+1}(\cN)$ consisting of those elements $\xi$ for which $\xi_k = \wt{0}_k$ for all $0 \leq k \leq m$. Let $N_0 \subseteq H(\T)$ be the closed subgroup of those elements $\alpha$ for which there exists some $\xi \in \cN_0$ with $\xi_{m+1} = \alpha$. The arguments above show that $[\phi,\psi]_{m+1} \in N_0$ for those $\phi \in \cN$ for which $\rho_m(\phi) \in \cH_{m,1}$ and for all $\psi \in \cH$. Character theory tells us that $N_0$ can be written as the intersection of the kernels of a group of characters of $H(\T)$, so there is a subgroup $\Gamma$ of $\T$ so that
	\[N_0 = \set{\alpha \in H(\T)}{\alpha(x) = 1 \text{ for all } x \in \Gamma}.
\]
Keeping in mind the condition \eqref{eq:H.inf}, the proof is complete once we show that $\Gamma = \T_Q$, for then $\rho_{m+1}(\cN)$ contains all members of $\cH_{m+1,1}$. Firstly, \eqref{eq:H.inf} implies that $\T_Q \subseteq \Gamma$ since, for all $\alpha \in N_0$ and $x \in \T_Q$, we must have $\alpha(x) = 1$. Secondly, for any $\alpha, \beta \in H(\T)$ with $\alpha(x) = 1$ for all $x \in \T_Q$, we can find $\phi \in \cN$ and $\psi \in \cH$ so that $\rho_m(\phi) \in \cH_{m,1}$, $\phi_m = \alpha$ (by the assumption that $\cH_{m,1} \subseteq \rho_m(\cN)$) and $\psi_1 = \beta$, and then, for all $y \in \Gamma$,
	\[\ov{\beta(\alpha(y))} \alpha(\beta(y)) = [\phi,\psi]_{m+1}(y) = 1,
\]
i.e., $\alpha(\beta(y)) = \beta(\alpha(y))$. This condition implies that $\Gamma \subseteq \T_Q$, and we show this by an argument by contradiction. Suppose that there is some $y \in \Gamma \setminus \T_Q$. Then, we can choose $\alpha \in H(\T)$ so that $\alpha(y) = y$ (and $\alpha(x) = 1$ for all $x \in \T_Q$), and we can choose $\beta \in H(\T)$ so that $\beta(y) = -1$. This leads to a contradiction: $\alpha(\beta(y)) = \alpha(-1) = 1$, but $\beta(\alpha(y)) = \beta(y) = -1$. In conclusion, $\Gamma = \T_Q$, as desired.
\end{proof}

The next lemma can be thought of as a `$\tau$-free' alternative to Corollary 5 in chapter 14 of \cite{auslander}.

\begin{lem}
\label{sec:RAB}
Let $X$ be a group and a compact space, and let $A, B \subseteq X$ be subgroups so that the quotient spaces $X/A$ and $X/B$ are Hausdorff. Suppose that $B$ is normal, so $AB$ is a subgroup of $X$. Then, the quotient space $X/(AB)$ is Hausdorff.
\end{lem}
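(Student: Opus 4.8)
The plan is to recast the whole statement in terms of closedness of quotient relations. Recall from the discussion of factors of $(E,T)$ that, for a subgroup $H$ of a compact Hausdorff space $X$ carrying a group structure, the quotient $X/H$ is Hausdorff if and only if the left-coset relation
	\[R(H) = \set{(x,y) \in X \times X}{x^{-1}y \in H}
\]
is closed in $X \times X$; this equivalence is purely topological (it concerns an equivalence relation on a compact Hausdorff space) and so does not require continuity of the group operations. Thus the hypotheses amount to $R(A)$ and $R(B)$ being closed, and the goal becomes showing that $R(AB)$ is closed. Here the normality of $B$ enters only to guarantee that $AB$ is a subgroup, so that $R(AB)$ is genuinely the coset relation whose closedness we need.

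The key observation is that $R(AB)$ is the composition of $R(A)$ and $R(B)$. Concretely, I would verify
	\[R(AB) = \set{(x,z) \in X \times X}{\exists\, y \in X \text{ with } (x,y) \in R(A) \text{ and } (y,z) \in R(B)}.
\]
The inclusion $\supseteq$ is immediate, since $x^{-1}y \in A$ and $y^{-1}z \in B$ give $x^{-1}z = (x^{-1}y)(y^{-1}z) \in AB$. For $\subseteq$, given $x^{-1}z = ab$ with $a \in A$ and $b \in B$, the point $y = xa$ satisfies $x^{-1}y = a \in A$ and $y^{-1}z = a^{-1}(x^{-1}z) = b \in B$. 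This set identity uses only that $AB$ is the product set.

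It then remains to prove that a composition of closed relations on a compact space is again closed, which is where compactness does the work. I would form the set
	\[W = \set{(x,y,z) \in X^3}{(x,y) \in R(A),\ (y,z) \in R(B)} = p_{12}^{-1}(R(A)) \cap p_{23}^{-1}(R(B)),
\]
where $p_{12},p_{23} \colon X^3 \to X^2$ are the coordinate projections; $W$ is closed because $R(A)$ and $R(B)$ are. Identifying $X^3 \cong X^2 \times X$ by moving the middle coordinate to the end, the projection $p_{13} \colon (x,y,z) \mapsto (x,z)$ becomes projection along the compact factor $X$, hence a closed map. Therefore $R(AB) = p_{13}(W)$ is closed, and by the equivalence recalled above $X/(AB)$ is Hausdorff.

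I do not expect a serious obstacle: the two genuinely used ingredients are the standard equivalence ``Hausdorff quotient $\Leftrightarrow$ closed relation'' for a compact Hausdorff space and the closedness of projections along a compact factor. The only points needing care are the bookkeeping in the coset computation and pinning down the exact role of normality, namely that it is invoked solely to know $AB$ is a subgroup so that $X/(AB)$ and its coset relation $R(AB)$ are the objects we intend.
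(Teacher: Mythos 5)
Your proof is correct and is essentially the paper's own argument in different packaging: the paper shows $R(AB)$ is closed by a direct net argument, extracting a convergent subnet of the auxiliary points $x_\lambda a_\lambda = y_\lambda b_\lambda$, and that auxiliary limit is exactly the middle coordinate of your set $W$, while your appeal to closedness of projections along a compact factor is the abstract form of that same compactness step. Both arguments rest on the same two ingredients, namely the equivalence between Hausdorff quotients and closed coset relations and the factorisation of $x^{-1}z \in AB$ through an intermediate point $y = xa$, so there is no substantive difference in route.
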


\begin{proof}
Let $R(G)$ be the quotient relation on $X$ for $G \in \{A,B,AB\}$, that is,
	\[R(G) = \set{(x,y) \in X \times X}{x^{-1}y \in G}.
\]
Assuming that $R(A)$ and $R(B)$ are closed, we must show that $R(AB)$ is closed. Pick $(x,y) \in \ov{R(AB)}$, and let $(x_\lambda)$ and $(y_\lambda)$ be nets in $X$ so that $(x_\lambda,y_\lambda) \in R(AB)$ for every $\lambda$ and $(x_\lambda,y_\lambda) \conv{\lambda} (x,y)$. We can find nets $(a_\lambda)$ and $(b_\lambda)$ in $A$ and $B$, respectively, so that $x_\lambda^{-1} y_\lambda = a_\lambda b_\lambda^{-1}$ for all $\lambda$. Now, $x_\lambda a_\lambda = y_\lambda b_\lambda$, $(x_\lambda,x_\lambda a_\lambda) \in R(A)$ and $(y_\lambda,y_\lambda b_\lambda) \in R(B)$ for every $\lambda$. By passing to a convergent subnet if necessary, we can assume that $x_\lambda a_\lambda \conv{\lambda} z$, and then also $y_\lambda b_\lambda \conv{\lambda} z$. Since $R(A)$ and $R(B)$ are closed, we can find $a \in A$ and $b \in B$ such that $xa = z = yb$, so $x^{-1}y = ab^{-1} \in AB$, showing that $(x,y) \in R(AB)$. This proves that $R(AB)$ is closed.
\end{proof}

The group $\cK$ discussed before the lemmas can now be described in a useful way.

\begin{thm}
\label{sec:normalstructure}
Let $\cG \subseteq \cH$ be a subgroup for which the quotient space $\cH/\cG$ is Hausdorff, and let $\cK \subseteq \cH$ be the normal subgroup
	\[\cK = \set{\phi \in \cH}{\phi \star \psi \star \cG = \psi \star \cG \text{ for all } \psi \in \cH}.
\]
\begin{itemize}
\item[(i)] If there exists a smallest number $m \in \N$ for which $\rho_m^{-1}(\rho_m(\cG)) = \cG$, then
	\[\cK = \set{\phi \in \cG}{\phi_k = \wt{0}_k \text{ for all } 0 \leq k \leq m-1}.
\]
\item[(ii)] If $\cG$ is a proper subset of $\rho_m^{-1}(\rho_m(\cG))$ for every $m \in \N$, then $\cK = \{\wt{0}\}$.
\end{itemize}
\end{thm}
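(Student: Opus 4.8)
The plan is to recognise $\cK$ as the \emph{normal core} of $\cG$, i.e.\ the largest closed normal subgroup of $\cH$ contained in $\cG$: since $\phi\in\cK$ exactly when $\psi^{-1}\star\phi\star\psi\in\cG$ for every $\psi\in\cH$, we have $\cK=\bigcap_{\psi\in\cH}\psi\star\cG\star\psi^{-1}$, and any normal subgroup sitting inside $\cG$ automatically lies in $\cK$. I would organise everything around the decreasing chain of subgroups $\cG^{(m)}:=\rho_m^{-1}(\rho_m(\cG))$, which satisfies $\cG^{(1)}\supseteq\cG^{(2)}\supseteq\cdots$ and $\bigcap_m\cG^{(m)}=\cG$ because $\cG$ is closed. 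Hypothesis (i) says this chain stabilises, with $m$ the first index where $\cG^{(m)}=\cG$; hypothesis (ii) says it never stabilises. The theorem then amounts to tying the size of $\cK$ in high coordinates to where (if ever) this chain becomes constant.

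For the inclusion $\supseteq$ in (i) I would show that the candidate $\cK':=\set{\phi\in\cG}{\phi_k=\wt{0}_k\text{ for }0\le k\le m-1}$ is normal in $\cH$; being contained in $\cG$, it is then contained in the normal core $\cK$. Given $\phi\in\cK'$ and $\psi\in\cH$, Proposition~\ref{sec:commutator} (with $k=m-1$) shows that the commutator $[\phi,\psi]=\phi^{-1}\star\psi^{-1}\star\phi\star\psi$ satisfies $[\phi,\psi]_j=\wt{0}_j$ for $0\le j\le m$, so $\rho_m(\psi^{-1}\star\phi\star\psi)=\rho_m(\phi\star[\phi,\psi])=\rho_m(\phi)\in\rho_m(\cG)$; hence $\psi^{-1}\star\phi\star\psi\in\cG^{(m)}=\cG$ and its first $m$ coordinates still agree with $\wt{0}$, placing it in $\cK'$. (When $m=1$ this already gives $\cK'=\cG$, and since $\cK\subseteq\cG$ always, $\cK=\cG$.) The second ingredient is the elementary remark that $\cH_{m,1}\subseteq\rho_m(\cG)$ forces $\cG^{(m-1)}=\cG^{(m)}$: writing $\phi\in\cG^{(m-1)}$ as $g\star h$ with $g\in\cG$ and $\rho_{m-1}(h)=\rho_{m-1}(\wt{0})$ gives $\rho_m(h)\in\cH_{m,1}\subseteq\rho_m(\cG)$, whence $h\in\cG^{(m)}$ and $\phi\in\cG^{(m)}$.

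The engine driving both the reverse inclusion in (i) and all of (ii) is the claim that a nontrivial closed normal subgroup $\cN\subseteq\cH$ is automatically large in high coordinates: if $n_0\ge1$ is the least index at which some member of $\cN$ differs from $\wt{0}$, then $\cH_{n_0+1,1}\subseteq\rho_{n_0+1}(\cN)$, after which Lemma~\ref{sec:normalextend} yields $\cH_{n,1}\subseteq\rho_n(\cN)$ for all $n\ge n_0+1$. Applying this to $\cN=\cK$ and using $\cK\subseteq\cG$ gives $\cH_{n,1}\subseteq\rho_n(\cG)$ for all large $n$, so by the previous remark $\cG^{(n)}$ is constant for $n\ge n_0$ and therefore equals $\cG$. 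In case (ii) this contradicts $\cG\subsetneq\cG^{(n_0)}$, forcing $\cK=\{\wt{0}\}$. In case (i), if $\cK\not\subseteq\cK'$ then $n_0\le m-1$, so we reach $\cH_{m,1}\subseteq\rho_m(\cG)$ and hence $\cG^{(m-1)}=\cG^{(m)}=\cG$, contradicting the minimality of $m$; thus $\cK\subseteq\cK'$ and, with the second paragraph, $\cK=\cK'$.

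The main obstacle is the base case $\cH_{n_0+1,1}\subseteq\rho_{n_0+1}(\cN)$. As every element of $\cN$ is trivial below $n_0$, the map $\phi\mapsto\phi_{n_0}$ is a homomorphism onto a nontrivial closed subgroup $P\subseteq H(\T)$, and normality feeds back into $\cN$ the commutators $[\phi,\psi]$, which by Proposition~\ref{sec:commutator} are trivial up to index $n_0$ with $(n_0+1)$-st coordinate $\ov{(\psi_1\circ\phi_{n_0})}(\phi_{n_0}\circ\psi_1)$. Identifying the level-$(n_0+1)$ data of $\cH_{n_0+1,1}$ with $N_0=\set{\alpha\in H(\T)}{\alpha|_{\T_Q}=1}$, the task is precisely to show these commutator values generate $N_0$, that is, to rerun the $\Gamma=\T_Q$ computation from the proof of Lemma~\ref{sec:normalextend}, now with $\phi_{n_0}$ ranging only over $P$ rather than over all admissible homomorphisms. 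The genuinely delicate point is that this computation degenerates exactly when the lowest coordinate is a continuous character: if $\phi_{n_0}=\ell^{\times}$ then $\psi_1\circ\ell^{\times}=\ell^{\times}\circ\psi_1$ and every such commutator is trivial. I would dispatch this case separately, either by passing to higher-order commutators (where $\psi_2$ enters and composition no longer commutes) or by exploiting that continuous characters are controlled by the topological centre $\Lambda(\cH)=\set{\wt{n}}{n\in\Z}$ of Theorem~\ref{sec:topcentr}; in the non-degenerate case the argument of Lemma~\ref{sec:normalextend} transfers almost verbatim. Lemma~\ref{sec:RAB} enters when checking that the quotients produced here, in particular $\cH/\cK\cong E(X,s)$, are Hausdorff.
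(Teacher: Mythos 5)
Your skeleton is essentially the paper's own proof: identify $\cK$ as the normal core of $\cG$, get the inclusion $\cK^{\prime}\subseteq\cK$ from the level-$m$ computation (the paper does the same computation directly with cosets rather than via normality of $\cK^{\prime}$), and drive both the reverse inclusion in (i) and all of (ii) by the principle that a nontrivial closed normal subgroup $\cN\subseteq\cH$ must satisfy $\cH_{n,1}\subseteq\rho_n(\cN)$ for all large $n$, which collapses the chain $\cG^{(m)}=\rho_m^{-1}(\rho_m(\cG))$ down to $\cG$ and contradicts the hypothesis on $m$. Indeed your case (ii) is tidier than the paper's, which routes through the groups $\cG\star\ker\rho_m$, Lemma~\ref{sec:RAB} and the identity $\cK=\bigcap_m\cK_m$; in your route Lemma~\ref{sec:RAB} is not needed at all (the theorem never asserts that $\cH/\cK$ is Hausdorff, so your closing sentence about it is superfluous).

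The genuine gap is exactly the point you flagged, and neither of your proposed escapes closes it. If the $n_0$-th coordinates of $\cN$ are continuous characters, higher-order commutators are structurally useless: every $\psi\in H(\T)$ is a group homomorphism, so $\psi\circ l^{\times}=l^{\times}\circ\psi$ for every $l\in\Z$; by Proposition~\ref{sec:commutator} the only contribution of $[\phi,\psi]$ at level $n_0+1$ is $\ov{(\psi_1\circ\phi_{n_0})}(\phi_{n_0}\circ\psi_1)$, which is then identically $0^{\times}$, and iterating commutators only makes elements trivial to ever higher levels while the surviving terms involve the uncontrolled coordinates $\phi_{n_0+1},\phi_{n_0+2},\dots$ --- so no commutator of any order can produce the elements of $\cH_{n_0+1,1}$ you need. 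The appeal to Theorem~\ref{sec:topcentr} also does not work as stated: an element with $\phi_{n_0}=l^{\times}$ but discontinuous higher coordinates is not in $\Lambda(\cH)$, so that theorem says nothing about it. The missing idea --- and the paper's actual move --- is that the degenerate case can be \emph{destroyed} rather than analysed: if $\phi\in\cN$ has $\phi_{n_0}=l^{\times}$ with $l\neq 0$, the powers $\phi^{j}$ lie in the group $\cN$, are trivial below $n_0$, and have $n_0$-th coordinates $(jl)^{\times}$; since $\cN$ is closed, hence compact, a suitable cluster point $\phi^{\prime}$ of $(\phi^{j})_j$ lies in $\cN$ and has $\phi^{\prime}_{n_0}$ equal to a \emph{discontinuous} cluster point of $((jl)^{\times})_j$. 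Such cluster points exist because $\ov{\set{(jl)^{\times}}{j\in\N}}$ is an infinite compact subgroup of $H(\T)$, hence uncountable, whereas the continuous characters form a countable set; equivalently, your group $P$ (the image of $\cN$ at coordinate $n_0$), being a nontrivial closed subgroup of $H(\T)$, cannot consist of continuous characters only. After replacing $\phi$ by $\phi^{\prime}$, the net argument exploiting discontinuity of $\phi^{\prime}_{n_0}$ (this is how the paper gets $\Gamma=\T_Q$, and it is your ``non-degenerate'' case) runs, and the rest of your proof is correct.
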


\begin{proof}
We start with the first case, so suppose that there exists a smallest number $m \in \N$ for which $\rho_m^{-1}(\rho_m(\cG)) = \cG$. Then, we also have $\rho_m^{-1}(\rho_m(\psi \star \cG)) = \psi \star \cG$ for every $\psi \in \cH$ since $\rho_m$ is a group homomorphism. Let $\cL$ be the set
	\[\cL = \set{\phi \in \cG}{\phi_k = \wt{0}_k \text{ for all } 0 \leq k \leq m-1}.
\]
Pick $\phi \in \cL$. For an arbitrary $\psi \in \cH$, we have $(\phi \star \psi)_k = \psi_k = (\psi \star \phi)_k$ for $0 \leq k \leq m-1$ and $(\phi \star \psi)_m = \phi_m \psi_m = (\psi \star \phi)_m$. Hence, $\rho_m(\phi \star \psi) = \rho_m(\psi \star \phi)$, and 
	\[\rho_m(\phi \star \psi \star \cG) = \rho_m(\psi \star \phi \star \cG) = \rho_m(\psi \star \cG).
\]
It follows that $\phi \star \psi \star \cG = \psi \star \cG$, so $\phi \in \cK$.

To prove the converse, we argue by contradiction and assume that there is some $\phi \in \cK \setminus \cL$. Then, $\phi_k \neq \wt{0}_k$ for some $0 \leq k \leq m-1$. Obviously, $\phi_0 = 1^{\times} = \wt{0}_0$, so $m \geq 2$, and there is a smallest number $1 \leq n \leq m-1$ for which $\phi_n \neq 0^{\times}$. Observe that, for all $j \in \Z$, the $j$th iterate of $\phi$ satisfies $(\phi^j)_k = \wt{0}_k$ for $0 \leq k \leq n-1$ and $(\phi^j)_n = (\phi_n)^j$. We are free to require that $\phi_n$ is not continuous. Indeed, if it happens that $\phi_n = l^{\times}$ for some $l \in \Z \setminus \{0\}$, then the sequence $((\phi^j)_n)_j = ((jl)^{\times})_j$ clusters at some non-continuous element of $H(\T)$. Thus, we can find a cluster point $\phi^{\prime} \in \cK$ of $(\phi^j)$ so that $\phi^{\prime}_n$ is not continuous. Obviously, $\phi^{\prime}_k = \wt{0}_k$ for $0 \leq k \leq n-1$, and $\phi^{\prime}_n \neq 0^{\times}$, so we can replace $\phi$ by $\phi^{\prime}$ if necessary.

We intend to show that $\rho_{n+1}(\cK)$ contains $\cH_{n+1,1}$. Adapting the ideas of the proof of Lemma~\ref{sec:normalextend}, define $\cK_0 = \cH_{n+1,1} \cap \rho_{n+1}(\cK)$, a closed, abelian subgroup of $\cH_{n+1}$, and let $K_0$ be the group of all $\alpha \in H(\T)$ for which there is some $\xi \in \cK_0$ satisfying $\xi_{n+1} = \alpha$. Note that, for an arbitrary $\psi \in \cH$, the commutator $[\phi,\psi] = \phi^{-1} \star \psi^{-1} \star \phi \star \psi$ must be in $\cK$ by normality. From Proposition~\ref{sec:commutator}, we get
\begin{align*}
(\phi^{-1} \star \psi^{-1} \star \phi \star \psi)_j &= 0^{\times} \text{ for all } 1 \leq j \leq n, \\
(\phi^{-1} \star \psi^{-1} \star \phi \star \psi)_{n+1} &= \ov{(\psi_1 \circ \phi_n)}(\phi_n \circ \psi_1).
\end{align*}
Consequently, the group $K_0$ contains $\ov{(\psi \circ \phi_n)}(\phi_n \circ \psi)$ for any $\psi \in H(\T)$. Let $\Gamma \subseteq \T$ be the subgroup for which
	\[K_0 = \set{\alpha \in H(\T)}{\alpha(x) = 1 \text{ for all } x \in \Gamma}.
\]
Now,
\begin{equation}
\phi_n(\psi(x)) = \psi(\phi_n(x)) \text{ for all } \psi \in H(\T), x \in \Gamma. \label{eq:commutationx}
\end{equation}
By \eqref{eq:H.inf} and the construction of $\cK_0$, we have $\T_Q \subseteq \Gamma$. We must show that $\Gamma = \T_Q$, and we argue by contradiction. Suppose that there exists some $x \in \Gamma \setminus \T_Q$. Since $\phi_n$ is not continuous, we can find a net $(y_\lambda)$ in $\T$ with a limit $y$ so that $\phi_n(y_\lambda) \conv{\lambda} z$ for some $z \neq \phi_n(y)$. There is also a net $(\psi_\lambda)$ in $H(\T)$ so that $\psi_\lambda(x) = y_\lambda$ for every $\lambda$. By passing to a subnet if necessary, we can assume that $\psi_\lambda \conv{\lambda} \psi$ for some $\psi \in H(\T)$ with $\psi(x) = y$. Using \eqref{eq:commutationx}, we obtain
	\[\phi_n(y_\lambda) = \phi_n(\psi_\lambda(x)) = \psi_\lambda(\phi_n(x)) \conv{\lambda} \psi(\phi_n(x)) = \phi_n(\psi(x)) = \phi_n(y),
\]
a contradiction. Hence, $\Gamma = \T_Q$, as desired.

According to Lemma~\ref{sec:normalextend}, the group $\rho_k(\cK)$ contains $\cH_{k,1}$ for all $k \geq n+1$. This entails that $\cH_{k,1} \subseteq \rho_k(\cG)$ for all $k \geq n+1$. As a further corollary, we get $\rho_n^{-1}(\rho_n(\cG)) = \cG$. To see the inclusion of the former in the latter, pick an arbitrary $\psi \in \rho_n^{-1}(\rho_n(\cG))$. We can construct a sequence $(\psi_{(j)})_{j = 0}^{\infty}$ in $\cG$ in a recursive manner so that $\rho_{n+j}(\psi_{(j)}) = \rho_{n+j}(\psi)$ for every $j \in \Z_+$. Pick $\psi_{(0)} \in \cG$ so that $\rho_n(\psi_{(0)}) = \rho_n(\psi)$. Suppose that $\psi_{(j)} \in \cG$ has been defined and that $\rho_{n+j}(\psi_{(j)}) = \rho_{n+j}(\psi)$. Then, $(\psi \star \psi_{(j)}^{-1})_k = \wt{0}_k$ for all $0 \leq k \leq n+j$. Therefore, $\rho_{n+j+1}(\psi \star \psi_{(j)}^{-1}) \in \cH_{n+j+1,1} \subseteq \rho_{n+j+1}(\cG)$, and we can find some $\xi \in \cG$ so that $\rho_{n+j+1}(\psi \star \psi_{(j)}^{-1}) = \rho_{n+j+1}(\xi)$. Put $\psi_{(j+1)} = \xi \star \psi_{(j)} \in \cG$. It has the property $\rho_{n+j+1}(\psi_{(j+1)}) = \rho_{n+j+1}(\psi)$. The sequence $(\psi_{(j)})$ converges to $\psi$, so the latter is in $\cG$. Hence, $\rho_n^{-1}(\rho_n(\cG)) = \cG$. However, this contradicts the assumptions that $m \in \N$ is the smallest number satisfying $\rho_m^{-1}(\rho_m(\cG)) = \cG$ and $n \leq m-1$. This completes the proof of (i).

We can use (i) to prove (ii). Suppose that $\cG$ is a proper subgroup of $\cG_m = \rho_m^{-1}(\rho_m(\cG))$ for every $m \in \N$. It is easy to show that $\cG_m = \cG \star \ker \rho_m$ for any $m \in \N$. The group $\ker \rho_m$, again for any $m \in \N$, is normal in $\cH$, and the quotient group $\cH/\ker \rho_m$ is topologically isomorphic with $\cH_m$, so it is Hausdorff. Lemma~\ref{sec:RAB} guarantees that $\cH/\cG_m$ is Hausdorff for all $m \in \N$. Let $\cK_m \subseteq \cH$ be defined by
	\[\cK_m = \set{\phi \in \cH}{\phi \star \psi \star \cG_m = \psi \star \cG_m \text{ for all } \psi \in \cH},
\]
$m \in \N$. Observe that $\cK = \bigcap_{m = 1}^{\infty} \cK_m$. Indeed, if $\phi \in \cK$, then for all $m \in \N$ and $\psi \in \cH$, we have
	\[\phi \star \psi \star \cG_m = \phi \star \psi \star \cG \star \ker \rho_m = \psi \star \cG \star \ker \rho_m = \psi \star \cG_m,
\]
so $\phi \in \bigcap_{m = 1}^{\infty} \cK_m$. Conversely, if $\phi \in \bigcap_{m = 1}^{\infty} \cK_m$, then for all $\psi \in \cH$,
	\[\phi \star \psi \star \cG = \bigcap_{m \in \N} \phi \star \psi \star \cG \star \ker \rho_m = \bigcap_{m \in \N} \psi \star \cG \star \ker \rho_m = \psi \star \cG,
\]
showing that $\phi \in \cK$. By (i), we have $\phi_k = \wt{0}_k$ for any $m \in \N$, $\phi \in \cK_m$ and $0 \leq k \leq m-1$, so $\cK = \{\wt{0}\}$, and the proof is complete.
\end{proof}

\begin{rems}
\label{sec:wellis}
Case (i) of this theorem has an easily proved converse: if $m \in \N$ and $K_0 \subseteq H(\T)$ is a closed subgroup, then the set $\cK \subseteq \cH$ defined by
	\[\cK = \set{\phi \in \cH}{\phi_k = \wt{0}_k \text{ for all } 0 \leq k \leq m-1, \phi_m \in K_0}
\]
is a normal subgroup of $\cH$, and $\cH/\cK$ is a Hausdorff space. Thus, we have found all normal subgroups of $\cH$ that induce a Hausdorff quotient, and this means that we have also found all Ellis groups of factors of $(\cH,T)$. Such an Ellis group is of one of three types: it is either compact, abelian and monothetic (equicontinuous case), the full group $\cH$, or analogous to the Ellis group of a minimal $(m,x_0)$-system for $m \geq 2$, i.e., it is essentially $\cH_{m-1} \times G$ for some compact, abelian, monothetic group $G$ (possibly trivial), and the group operation is also similar to $\ast$ as defined in case (iii) of Theorem~\ref{sec:findim}. 

Case (ii) explains why the Ellis group of a minimal $(\infty,x_0)$-system $(\T^{\N},s)$ is isomorphic to $(\cH,T)$: we can define a homomorphism $\pi \colon \cH \to \T^{\N}$ by $\pi(\phi)_k = \phi_k(x_0)$, $k \in \N$, for all $\phi \in \cH$, and then
	\[\cG = \pi^{-1}(1,1,1,\ldots) = \set{\phi \in \cH}{\phi_k(x_0) = 1 \text{ for all } k \in \N}.
\]

\end{rems}

The main theorem of this section follows quite effortlessly from the one above:

\begin{thm}
\label{sec:wsystem}
Let $(X,s)$ be a factor of $(\cH,T)$. Then, the system $(E(X,s),\lambda_s)$ has quasi-discrete spectrum.
\end{thm}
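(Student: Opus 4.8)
The plan is to show that the quasi-eigenfunctions of $(\cH,T)$ that are constant on the fibres of the induced homomorphism $\wt{\pi} \colon \cH \to E(X,s)$ already separate those fibres; by Proposition~\ref{sec:qeffactor} each such function descends to a quasi-eigenfunction of $(E(X,s),\lambda_s)$, so this is enough for quasi-discrete spectrum. The fibres of $\wt{\pi}$ are exactly the cosets of $\cK$, and every quasi-eigenfunction of $(\cH,T)$ is of the form $q(v)$ for some $v \in V$. Thus the whole problem reduces to producing, for each pair of distinct cosets $\phi \star \cK \neq \psi \star \cK$, an element $v \in V$ such that $q(v)$ is constant on the cosets of $\cK$ while $q(v)(\phi) \neq q(v)(\psi)$.

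I would first read off the structure of $\cK$ from Theorem~\ref{sec:normalstructure}. In case (ii) we have $\cK = \{\wt{0}\}$, so $\wt{\pi}$ is an isomorphism of flows and $(E(X,s),\lambda_s) \cong (\cH,T)$, which has quasi-discrete spectrum. The real content is case (i). There $\cK$ is a closed normal subgroup of $\cH$ with Hausdorff quotient, so by Remarks~\ref{sec:wellis} it has the explicit form
\[
\cK = \set{\phi \in \cH}{\phi_k = \wt{0}_k \text{ for } 0 \leq k \leq m-1, \ \phi_m \in K_0}
\]
for some $m \in \N$ and a closed subgroup $K_0 \subseteq H(\T)$; by character theory, as in Lemma~\ref{sec:normalextend}, we may write $K_0 = \set{\alpha \in H(\T)}{\alpha(x) = 1 \text{ for all } x \in \Gamma}$ for a subgroup $\Gamma \subseteq \T$.

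The separating functions will be the single-coordinate ones: for an index $n$ and $x \in \T$, let $v$ have $v_n = x$ and $v_k = 1$ otherwise, so that $q(v)(\chi) = \chi_n(x)$. Using $(\chi \star \kappa)_n = \prod_{j=0}^n \chi_{n-j} \circ \kappa_j$ one checks that $q(v)$ is constant on the cosets of $\cK$ exactly when $\prod_{j=1}^n \chi_{n-j}(\kappa_j(x)) = 1$ for all $\chi \in \cH$ and $\kappa \in \cK$; since $\kappa_j = 0^{\times}$ for $1 \leq j \leq m-1$, this holds automatically when $n \leq m-1$, and for $n = m$ it reduces to $x \in \Gamma$. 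To separate $\phi \star \cK \neq \psi \star \cK$, put $\eta = \phi^{-1} \star \psi \notin \cK$ and let $n$ be the least index with $\eta_n \neq \wt{0}_n$; the explicit form of $\cK$ forces $1 \leq n \leq m$, with $\eta_m \notin K_0$ when $n = m$. Because $\eta_j = 0^{\times}$ for $1 \leq j \leq n-1$, the product formula for $\star$ gives $\psi_n = \phi_n \eta_n$. I then choose $x$ with $\eta_n(x) \neq 1$: any $x$ if $n \leq m-1$, and some $x \in \Gamma$ if $n = m$, which exists since $\eta_m \notin K_0 = \Gamma^{\perp}$. The resulting $q(v)$ is constant on the cosets of $\cK$ and satisfies $q(v)(\phi) = \phi_n(x) \neq \phi_n(x)\eta_n(x) = \psi_n(x) = q(v)(\psi)$, so it separates the two cosets.

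I expect the main obstacle to be case (i): extracting the precise shape of $\cK$ (that it is pinned down by coordinates $0,\dots,m$, with the top coordinate ranging over $K_0 = \Gamma^{\perp}$) and then matching the invariance condition for single-coordinate functions to the membership $x \in \Gamma$ at the critical coordinate $n = m$. The identity $\psi_n = \phi_n \eta_n$ at the first differing coordinate is the computational hinge that ties the group operation $\star$ to the values of $q(v)$, and the duality $K_0 = \Gamma^{\perp}$ is what guarantees that a separating character survives at the top coordinate.
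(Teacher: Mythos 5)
Your proposal is correct and follows essentially the same route as the paper's proof: reduce to exhibiting quasi-eigenfunctions of $(\cH,T)$ that are constant on the cosets of $\cK$ and separate them, split into the two cases of Theorem~\ref{sec:normalstructure}, and in case (i) exploit the annihilator duality $K_0 = \set{\alpha \in H(\T)}{\alpha(x)=1 \text{ for all } x \in \Gamma}$ together with functions supported on coordinates $\leq m$ whose $m$-th entry lies in $\Gamma$ (your single-coordinate functions generate exactly the paper's group $q(W)$). The only cosmetic difference is that you run the separation argument contrapositively through the explicit form of $\cK$ recorded in Remarks~\ref{sec:wellis}, whereas the paper concludes membership in $\cK$ directly from $\rho_m^{-1}(\rho_m(\cG)) = \cG$.
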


\begin{proof}
Let $\pi \colon \cH \to X$ be a homomorphism, let $\cG = \pi^{-1}(\pi(\wt{0})) \subseteq \cH$, and let $\cK \subseteq \cH$ be the normal subgroup
	\[\cK = \set{\phi \in \cH}{\phi \star \psi \star \cG = \psi \star \cG \text{ for all } \psi \in \cH},
\]
so $E(X,s)$ is topologically isomorphic to $\cH/\cK$. We must find a group $Q$ of quasi-eigenfunctions of $(\cH,T)$ that are constant on the cosets of $\cH/\cK$ and separate these cosets. By Theorem~\ref{sec:normalstructure}, there are two cases to consider: either there is a smallest number $m \in \N$ for which $\rho_m^{-1}(\rho_m(\cG)) = \cG$, in which case
	\[\cK = \set{\phi \in \cG}{\phi_k = \wt{0}_k \text{ for all } 0 \leq k \leq m-1},
\]
or no such number $m \in \N$ exists and $\cK = \{\wt{0}\}$. The latter case is easy; simply choose $Q = G(\cH,T)$. To handle the remaining case, suppose that $\rho_m^{-1}(\rho_m(\cG)) = \cG$ for $m \in \N$, the smallest number with this property, and that $\cK$ is as stated above. Let $K_0 \subseteq H(\T)$ be the closed group of those elements $\alpha \in H(\T)$ for which there exists some $\phi \in \cK$ with $\phi_m = \alpha$. Let $\Gamma \subseteq \T$ be the subgroup for which
	\[K_0 = \set{\alpha \in H(\T)}{\alpha(x) = 1 \text{ for all } x \in \Gamma}
\]
Recall that $V$ is used to denote the weak product of $\Z_+$ copies of $\T$ and that we have defined a group isomorphism $q \colon V \to G(\cH,T)$. Let $W \subseteq V$ be the subgroup
	\[W = \set{w \in V}{w_m \in \Gamma, w_k = 1 \text{ for all } k \geq m+1}.
\]
It is easy to see that, for any $w \in W$, $\phi \in \cH$ and $\psi \in \cK$, we have the identity $q(w)(\phi) = q(w)(\phi \star \psi)$. In other words, the functions in $Q = q(W)$ are constant on the cosets of $\cH/\cK$. To show that distinct cosets can be separated by the members of $Q$, suppose that $\phi, \psi \in \cH$ are such that $q(w)(\phi) = q(w)(\psi)$ for all $w \in W$. Then, going through all $w \in W$ such that $w_m = 1$, we get $\phi_k = \psi_k$ for all $0 \leq k \leq m-1$. It follows that $(\phi^{-1} \star \psi)_k = \wt{0}_k$ for all $0 \leq k \leq m-1$. In addition, for any $x \in \Gamma$, choosing $w \in W$ so that $w_m = x$ and $w_k = 1$ for $k \neq m$, we get $(\phi^{-1} \star \psi)_m(x) = \ov{\phi_m} \psi_m(x) = 1$. Thus, $\rho_m(\phi^{-1} \star \psi) \in \rho_m(\cK) \subseteq \rho_m(\cG)$, and therefore, $\phi^{-1} \star \psi \in \rho_m^{-1}(\rho_m(\cG)) = \cG$. We conclude that $\phi^{-1} \star \psi \in \cK$, as desired, and the argument is complete.
\end{proof}

\begin{rem}
The example we constructed in the beginning of this section shows that the converse of Proposition~\ref{sec:qdslift} fails. In other words, there is a minimal distal system $(X,s)$ such that $(E(X,s),\lambda_s)$ has quasi-discrete spectrum but $(X,s)$ does not. 
\end{rem}

Theorem~\ref{sec:wsystem} has an interesting interpretation in the terminology of $m$-admissible subalgebras of $\cW$:

\begin{cor}
If $A \subseteq \cW$ is an $m$-admissible subalgebra, then $A = \ov{\lspan(A \cap \cP)}$.
\end{cor}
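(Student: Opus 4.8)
The plan is to pass from the algebraic statement about $A$ to the dynamical language of point-transitive systems, invoke Theorem~\ref{sec:wsystem}, and return via Theorem~\ref{sec:qdsweyl}.

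First I would use the dictionary between $m$-admissible subalgebras of $l^\infty$ and enveloping semigroup systems recalled in the preliminaries. Since $A$ is $m$-admissible, there is a system $(X,s)$ whose enveloping semigroup system $(E(X,s),\lambda_s)$ is point transitive with transitive point $e = \id_X$ and satisfies $\omega_e\cC(E(X,s)) = A$. Write $(E,\lambda) = (E(X,s),\lambda_s)$. The hypothesis $A \subseteq \cW$ combined with $\cW = \omega_{\wt{0}}\cC(\cH)$ (Theorem~\ref{sec:hweyl}) gives $\omega_e\cC(E) \subseteq \omega_{\wt{0}}\cC(\cH)$, so the representation theory of point-transitive systems (Proposition 5.12 of \cite{devries}) furnishes a homomorphism $\pi\colon \cH \to E$ with $\pi(\wt{0}) = e$. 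Thus $(E,\lambda)$ is a factor of $(\cH,T)$, and being a factor of a minimal system it is itself minimal.

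Next I would apply Theorem~\ref{sec:wsystem} to this factor to deduce that $(E(E,\lambda),\lambda_\lambda)$ has quasi-discrete spectrum. The key move is to observe that $(E,\lambda)$ is already an enveloping semigroup system, so the canonical map $a \mapsto \lambda_a$ is an isomorphism of flows $(E,\lambda) \cong (E(E,\lambda),\lambda_\lambda)$; since quasi-discrete spectrum is preserved by isomorphism, $(E,\lambda)$ has quasi-discrete spectrum. Finally, Theorem~\ref{sec:qdsweyl} applied to the minimal system $(E,\lambda)$ at the point $e$ shows that $\lspan(\omega_e\cC(E) \cap \cP) = \lspan(A \cap \cP)$ is dense in $\omega_e\cC(E) = A$, which is precisely $A = \ov{\lspan(A \cap \cP)}$.

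I expect the only genuinely delicate point to be conceptual rather than computational: one has to hold two descriptions of the same system in play at once, namely the abstract semigroup-compactification picture of \cite{bjm} that makes $(E,\lambda)$ an enveloping semigroup (and hence self-reproducing under the functor $E(\cdot)$), and the factor picture forced by $A \subseteq \cW$ together with the universality of $(\cH,T)$. Reconciling these is what lets the self-referential isomorphism $(E,\lambda) \cong (E(E,\lambda),\lambda_\lambda)$ convert the conclusion of Theorem~\ref{sec:wsystem} about the enveloping semigroup of $(E,\lambda)$ into a statement about $(E,\lambda)$ itself. Once this is arranged, Theorems~\ref{sec:wsystem} and \ref{sec:qdsweyl} carry the entire argument.
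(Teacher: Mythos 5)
Your proof is correct and follows essentially the same route as the paper's: realise $A$ as the algebra of a compact right topological group system that is its own enveloping semigroup, exhibit it as a factor of $(\cH,T)$ via $A \subseteq \cW = \omega_{\wt{0}}\cC(\cH)$, apply Theorem~\ref{sec:wsystem} together with the canonical isomorphism $a \mapsto \lambda_a$ to transfer quasi-discrete spectrum to the system itself, and finish with Theorem~\ref{sec:qdsweyl}. The only difference is cosmetic: the paper works in the semigroup-compactification language of \cite{bjm} (taking the compactification $(\alpha,X)$ of $\Z$ with $\alpha^{\ast}\cC(X)=A$ and the canonical surjection $\cH \to X$), whereas you use the equivalent point-transitive-system dictionary and Proposition 5.12 of \cite{devries}.
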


\begin{proof}
Since $A$ is $m$-admissible, there is a corresponding semigroup compactification $(\alpha,X)$ of $\Z$, that is, $X$ is a compact right topological semigroup, and $\alpha \colon \Z \to X$ is a semigroup homomorphism such that the image $\alpha(\Z)$ is dense in $X$, $\alpha(\Z) \subseteq \Lambda(X)$, and $\alpha^{\ast} \cC(X) = A$ (see \cite{bjm}). There is also a continuous, surjective semigroup homomorphism $\pi \colon \cH \to X$ with the property that $\pi \circ \tau = \alpha$ where $\tau \colon \Z \to \cH$ is the mapping $\tau(n) = \wt{n}$, $n \in \Z$. Consequently, $X$ is a group. Let $s = \lambda_{\alpha(1)}$. Then, $(X,s)$ is a minimal distal system, it is a factor of $(\cH,T)$, and $(E(X,s),\lambda_s)$ is isomorphic to $(X,s)$. By Theorem~\ref{sec:wsystem}, $(X,s)$ has quasi-discrete spectrum. Since $\omega_{\alpha(0)}\cC(X) = A$, Theorem~\ref{sec:qdsweyl} implies that $\lspan(A \cap \cP)$ is dense in $A$.
\end{proof}

Theorem~\ref{sec:wsystem} suggests a property of dynamical systems, which we call the $\cW$\emph{-property}:

\begin{defn}
A dynamical system $(X,s)$ is a $\cW$\emph{-system} if $(E(X,s),\lambda_s)$ has quasi-discrete spectrum. 
\end{defn}

Note that if $(E(X,s),\lambda_s)$ has quasi-discrete spectrum, it is automatically minimal and distal. A $\cW$-system is therefore distal. The Ellis groups of $\cW$-systems are given by Theorem~\ref{sec:normalstructure}. The following proposition should justify this choice of terms.

\begin{prop}
Let $(X,s)$ be a minimal system. The following conditions are equivalent:
\begin{itemize}
\item[(i)] The system $(X,s)$ is a $\cW$-system.
\item[(ii)] The system $(X,s)$ is a factor of $(\cH,T)$.
\item[(iii)] There exists a point $x \in X$ so that $\omega_x\cC(X) \subseteq \cW$.
\item[(iv)] For every point $x \in X$, we have $\omega_x\cC(X) \subseteq \cW$.
\end{itemize}
If $(X,s)$ is a dynamical system, not necessarily minimal, then (i), (iv) and the following condition are equivalent:
\begin{itemize}
\item[(v)] The system $(X,s)$ is distal, and its minimal subsystems are $\cW$-systems.
\end{itemize}
\end{prop}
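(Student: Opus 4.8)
The plan is to settle the minimal case first, as a cycle of implications powered by Theorem~\ref{sec:wsystem}, the universality of $(\cH,T)$ (Corollary~\ref{sec:universal}), and the representation theory of point transitive systems, and then to bootstrap the general case by carrying out all the work at the level of Ellis groups. Throughout I will freely use that $(\cH,T)$ is a minimal system with quasi-discrete spectrum, that $\cW=\omega_{\wt{0}}\cC(\cH)$ (Theorem~\ref{sec:hweyl}), that $\cW$ is a closed algebra, and that a minimal distal system is canonically isomorphic to its own Ellis system.

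For the minimal case I would prove the chain $(ii)\Rightarrow(i)\Rightarrow(ii)$ and $(ii)\Rightarrow(iv)\Rightarrow(iii)\Rightarrow(ii)$. The implication $(ii)\Rightarrow(i)$ is exactly Theorem~\ref{sec:wsystem}. For $(i)\Rightarrow(ii)$, if $(E(X,s),\lambda_s)$ has quasi-discrete spectrum then it is a minimal system with quasi-discrete spectrum, hence a factor of $(\cH,T)$ by Corollary~\ref{sec:universal}; since $(X,s)$ is a factor of $(E(X,s),\lambda_s)$ via the evaluation $a\mapsto a(x_0)$ at any base point (surjective because $X$ is minimal), it is a factor of $(\cH,T)$. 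For $(ii)\Rightarrow(iv)$, given a homomorphism $\pi\colon\cH\to X$ and $x=\pi(\phi)$, the identity $\omega_x\cC(X)=\omega_\phi\pi^{\ast}\cC(X)\subseteq\omega_\phi\cC(\cH)$ combined with Theorem~\ref{sec:qdsweyl} applied to $(\cH,T)$ gives $\omega_x\cC(X)\subseteq\cW$. The step $(iv)\Rightarrow(iii)$ is trivial, and $(iii)\Rightarrow(ii)$ is the representation theory: $\omega_x\cC(X)\subseteq\cW=\omega_{\wt{0}}\cC(\cH)$ produces a homomorphism $\cH\to X$ sending $\wt{0}$ to $x$.

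The general case rests on two computational observations. First, for $f,g\in\cC(X)$ one has $\omega_{(x,y)}(f\otimes g)=\omega_x(f)\,\omega_y(g)$ in $l^{\infty}$, so, $\cW$ being a closed algebra, the $\cW$-condition propagates to orbit closures in products. Second, the functions $F_{f,y}\colon a\mapsto f(a(y))$ (with $f\in\cC(X)$, $y\in X$) generate $\cC(E(X,s))$ and satisfy $\omega_{\id}(F_{f,y})=\omega_y(f)$, so $(iv)$ forces $\omega_{\id}\cC(E(X,s))\subseteq\cW$. For $(iv)\Rightarrow(i)$, this inclusion and the representation theory exhibit $(E(X,s),\lambda_s)$ as a factor of $(\cH,T)$, hence minimal and distal; Theorem~\ref{sec:wsystem} then shows its Ellis system has quasi-discrete spectrum, and the isomorphism of a minimal distal system with its Ellis system upgrades this to quasi-discrete spectrum of $(E(X,s),\lambda_s)$ itself, i.e.\ $(i)$. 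For $(i)\Rightarrow(iv)$, condition $(i)$ makes $(E(X,s),\lambda_s)$ a minimal factor of $(\cH,T)$ (Corollary~\ref{sec:universal}), so the minimal implication $(ii)\Rightarrow(iv)$ yields $\omega_{\id}\cC(E(X,s))\subseteq\cW$; since $a\mapsto a(x)$ is a homomorphism $(E(X,s),\lambda_s)\to\ov{\cO(x)}$ sending $\id$ to $x$ and the restriction $\cC(X)\to\cC(\ov{\cO(x)})$ is onto, $\omega_x\cC(X)=\omega_x\cC(\ov{\cO(x)})\subseteq\cW$.

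Finally I would close $(iv)\Leftrightarrow(v)$. Under $(iv)$, every orbit closure of $(X\times X,s\times s)$ has representing algebra contained in $\cW$ (first observation plus density), hence is a factor of $(\cH,T)$ and therefore minimal; thus every point of $X\times X$ is almost periodic and $(X,s)$ is distal, while each minimal subsystem $Y$ satisfies $\omega_y\cC(Y)=\omega_y\cC(X)\subseteq\cW$ and so is a $\cW$-system by the minimal case, giving $(v)$. Conversely, under $(v)$ distality makes each $\ov{\cO(x)}$ a minimal subsystem that is a $\cW$-system, whence the minimal case returns $\omega_x\cC(X)=\omega_x\cC(\ov{\cO(x)})\subseteq\cW$. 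The main obstacle is precisely the phenomenon this paper is built around: a factor of $(\cH,T)$ need not have quasi-discrete spectrum, so I must never argue that $(X,s)$ or a subsystem \emph{inherits} quasi-discrete spectrum from being such a factor; the remedy is to keep every conclusion at the level of Ellis groups and use the self-isomorphism of a minimal distal system with its Ellis system to convert the weaker output of Theorem~\ref{sec:wsystem} into the statement needed. A secondary, more routine point is the verification of distality in $(iv)\Rightarrow(v)$, which I dispatch through the pointwise almost periodicity of $X\times X$ rather than a direct proximality estimate.
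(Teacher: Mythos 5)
Your proof is correct, and the minimal case follows the paper's route exactly: the paper's own proof is only a sketch naming Theorem~\ref{sec:hweyl}, Corollary~\ref{sec:universal}, Theorem~\ref{sec:wsystem} and the duality between point transitive systems and subalgebras of $l^{\infty}$, and your cycle (ii)$\Rightarrow$(i)$\Rightarrow$(ii), (ii)$\Rightarrow$(iv)$\Rightarrow$(iii)$\Rightarrow$(ii) is precisely how those ingredients assemble. The genuine difference is in the non-minimal case, which the paper dismisses with the single remark that a distal phase space partitions into minimal sets; along that implied route one proves (iv)$\Rightarrow$(i) by noting that every orbit closure is a factor of $(\cH,T)$, hence minimal and a $\cW$-system, and then embedding $(E(X,s),\lambda_s)$ as a subsystem of the product of the Ellis systems of the minimal pieces, using that quasi-discrete spectrum is preserved by products and subsystems. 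You instead prove (iv)$\Rightarrow$(i) directly by pushing $\cC(E(X,s))$ into $\cW$ with the evaluation functions $F_{f,y}$ and Stone--Weierstrass, and you recover distality from pointwise almost periodicity of $X\times X$ rather than from the partition; both computations you rely on ($\omega_{(x,y)}(f\otimes g)=\omega_x(f)\,\omega_y(g)$ and $\omega_{\id}(F_{f,y})=\omega_y(f)$) are right, and your insistence on staying at the Ellis level---upgrading the output of Theorem~\ref{sec:wsystem} through the isomorphism of a minimal distal system with its own Ellis system---is exactly the device the paper itself depends on, since a factor of $(\cH,T)$ need not have quasi-discrete spectrum. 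What your route buys is independence from the preservation properties of quasi-discrete spectrum and from the semisimple decomposition of distal systems (except where (v) explicitly demands it); what it costs is the $X\times X$ detour, which is in fact avoidable within your own architecture: once (iv)$\Rightarrow$(i) is in hand, distality of $X$ is automatic (the paper notes that a $\cW$-system is distal because its Ellis semigroup is a group), so (v) then follows from (i) and the minimal case without ever discussing the product system.
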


The arguments of the proof in the minimal case are direct applications of Theorem~\ref{sec:hweyl}, Corollary~\ref{sec:universal}, Theorem~\ref{sec:wsystem} and the duality of point transitive dynamical systems and certain subalgebras of $l^{\infty}$. Obviously, $(\cH,T)$ is the universal minimal $\cW$-system. The non-minimal case follows from the well-known fact that the phase space of a distal system can be partitioned into minimal sets. The next proposition follows immediately from Theorem~\ref{sec:uew} and the proposition above.

\begin{prop}
A minimal $\cW$-system is uniquely ergodic.
\end{prop}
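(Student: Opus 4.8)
The plan is to chain the two results cited in the surrounding text. Since $(X,s)$ is assumed minimal, the equivalence of conditions established in the preceding proposition applies, and the implication (i) $\Rightarrow$ (ii) immediately exhibits a minimal $\cW$-system as a factor of the universal system $(\cH,T)$; concretely, it yields a homomorphism $\pi \colon \cH \to X$. Feeding this factor into Theorem~\ref{sec:uew}, which asserts that every factor of $(\cH,T)$ is uniquely ergodic, then closes the argument at once.

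Because the statement is a formal corollary of two earlier results, there is essentially no obstacle to overcome; the genuine content lies upstream. If one wishes to avoid routing through the universal system explicitly, I would instead invoke characterisation (iv) from the preceding proposition: for any $x \in X$ one has $\omega_x \cC(X) \subseteq \cW = \ov{\lspan \cP}$, and Theorem~\ref{sec:ueambit} can then be applied with $F$ chosen so that $\omega_x F \subseteq \cP$. The uniform-in-$k$ convergence of the averages $(1/N) \sum_{n=0}^{N-1} f(n+k)$ demanded by Theorem~\ref{sec:ueambit} is supplied directly by Theorem~\ref{sec:poly}, since the functions of polynomial type are precisely the generators of $\cW$. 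Either route confirms the claim, with all the real work residing in Theorem~\ref{sec:uew} and the characterisation proposition rather than in the present deduction.
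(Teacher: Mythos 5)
Your first route is correct and is exactly the paper's proof: the preceding proposition gives (i) $\Rightarrow$ (ii), so a minimal $\cW$-system is a factor of $(\cH,T)$, and Theorem~\ref{sec:uew} finishes the argument.

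Your proposed alternative route, however, has a genuine gap. Applying Theorem~\ref{sec:ueambit} with a family $F$ satisfying $\omega_x F \subseteq \cP$ requires $\lspan F$ to be dense in $\cC(X)$, i.e., it requires $\lspan(\omega_x\cC(X) \cap \cP)$ to be dense in $\omega_x\cC(X)$. By Theorem~\ref{sec:qdsweyl}, that condition is \emph{equivalent} to $(X,s)$ having quasi-discrete spectrum, and the whole point of Section 5 is that a minimal $\cW$-system need not have quasi-discrete spectrum: the factor $\cH/\cG$ constructed there is minimal, is a factor of $(\cH,T)$ (hence a $\cW$-system), yet its quasi-eigenfunctions fail to separate points. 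For that system no such $F$ exists, so your alternative argument cannot run. The repair is to take $F = \cC(X)$ and observe that the uniform averaging property demanded in Theorem~\ref{sec:ueambit} holds for \emph{every} element of $\cW$, not just of $\cP$: Theorem~\ref{sec:poly} gives it on $\cP$, it passes to finite linear combinations trivially, and it passes to uniform limits because the set of $f \in l^{\infty}$ whose averages $(1/N)\sum_{n=0}^{N-1} f(n+k)$ converge uniformly in $k$ to a constant is a closed linear subspace of $l^{\infty}$. With that closure step added, condition (iv) ($\omega_x\cC(X) \subseteq \cW$) does yield unique ergodicity directly; this is in fact how Theorem~\ref{sec:uew} itself is obtained from Theorems~\ref{sec:ueambit}, \ref{sec:poly} and \ref{sec:hweyl}, so the "alternative" collapses into the same upstream argument rather than bypassing it.
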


The following inheritance properties are easy to prove:

\begin{prop}
The $\cW$-property is inherited by factors and subsystems. Products and inverse limits of $\cW$-systems are $\cW$-systems.
\end{prop}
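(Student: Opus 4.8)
The plan is to reduce every assertion to the characterisation obtained in the previous proposition: a dynamical system $(X,s)$, whether minimal or not, is a $\cW$-system if and only if $\omega_x\cC(X) \subseteq \cW$ for every $x \in X$ (condition (iv)). The structural facts I would combine with this criterion are that $\cW$ is a closed algebra, that each operator $\omega_x$ is linear and contractive, and, for subsystems, the equivalent condition (v). In each case the idea is simply to transport (iv) along the map relating the two systems.

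\emph{Factors and subsystems.} Let $\pi \colon X \to Y$ be a homomorphism onto a factor $(Y,t)$ of a $\cW$-system $(X,s)$. Given $y \in Y$, I would choose $x \in \pi^{-1}(y)$; then $\omega_y(f)(n) = f(t^n y) = f(\pi(s^n x)) = \omega_x(\pi^{\ast}f)(n)$ for every $f \in \cC(Y)$ and $n \in \Z$, so $\omega_y\cC(Y) \subseteq \omega_x\cC(X) \subseteq \cW$. As $y$ is arbitrary, (iv) shows $(Y,t)$ is a $\cW$-system. For a subsystem $(Y,s)$ of a $\cW$-system $(X,s)$ I would instead invoke (v): a $\cW$-system is distal, distality is inherited by subsystems, and each minimal subsystem of $(Y,s)$ is a minimal subsystem of $(X,s)$ and hence a $\cW$-system; thus $(Y,s)$ satisfies (v). (One could equally use (iv), extending $f \in \cC(Y)$ to $\tilde f \in \cC(X)$ by Tietze and noting $\omega_y(f) = \omega_y(\tilde f)$ for $y \in Y$.)

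\emph{Products.} This is the step I expect to demand the most care. Let $\{(X_i,s_i)\}_{i \in I}$ be $\cW$-systems with product $(X,s)$, and fix $x = (x_i)_{i \in I} \in X$; I would verify $\omega_x\cC(X) \subseteq \cW$. By the Stone--Weierstrass theorem the linear span of the functions $g = \prod_{i \in F}(f_i \circ p_i)$, with $F \subseteq I$ finite, $f_i \in \cC(X_i)$ and $p_i \colon X \to X_i$ the coordinate projection, is dense in $\cC(X)$. For such a $g$ one computes $\omega_x(g)(n) = \prod_{i \in F} \omega_{x_i}(f_i)(n)$, a finite product of elements of $\cW$; since $\cW$ is an algebra, $\omega_x(g) \in \cW$. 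Because $\omega_x$ is contractive and $\cW$ is closed, $\omega_x$ carries the closure of this span, that is all of $\cC(X)$, into $\cW$. Criterion (iv) then yields that $(X,s)$ is a $\cW$-system.

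\emph{Inverse limits.} Finally, an inverse limit of $\cW$-systems is realised as a closed, invariant subset of the product of the systems in the inverse system (the subset cut out by the bonding homomorphisms), hence as a subsystem of a product. Combining the product and subsystem cases already established, this inverse limit is a $\cW$-system, which completes the argument.
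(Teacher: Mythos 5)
Your proof is correct, but it follows a genuinely different route from the paper's. You funnel every case through the subalgebra criterion (iv) of the preceding proposition, $\omega_x\cC(X) \subseteq \cW$ for every $x$: factors become an inclusion of orbit algebras ($\omega_y\cC(Y) = \omega_x(\pi^{\ast}\cC(Y)) \subseteq \omega_x\cC(X)$), products follow from Stone--Weierstrass together with the facts that $\cW$ is a closed algebra and each $\omega_x$ is a linear contraction, subsystems use condition (v), and inverse limits are subsystems of products. The paper instead argues entirely at the level of enveloping semigroups: a homomorphism $\pi \colon X \to Y$ induces a factor map $\wt{\pi}$ from $(E(X,s),\lambda_s)$ onto $(E(Y,t),\lambda_t)$; since the former has quasi-discrete spectrum it is minimal and distal, hence a factor of $(\cH,T)$ by Corollary~\ref{sec:universal}, so $(E(Y,t),\lambda_t)$ is a factor of $(\cH,T)$ as well, and Theorem~\ref{sec:wsystem}, combined with the canonical isomorphism between an Ellis group system and its own Ellis group system, gives quasi-discrete spectrum for $(E(Y,t),\lambda_t)$. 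That detour through Theorem~\ref{sec:wsystem} is forced on the paper because quasi-discrete spectrum itself is \emph{not} inherited by factors (this is exactly the paper's counterexample); your route sidesteps the issue because criterion (iv) passes trivially to smaller algebras. For subsystems the paper uses the restriction map $a \mapsto a|_Z$, and for products it embeds $E(\prod_{i} X_i)$ as a subsystem of $\prod_{i} E(X_i,s_i)$ and invokes the (valid) preservation of quasi-discrete spectrum under products and subsystems. Both routes ultimately rest on the same deep results, since the equivalence (i)$\Leftrightarrow$(iv) you cite is itself proved via Theorems~\ref{sec:hweyl} and \ref{sec:wsystem} and Corollary~\ref{sec:universal}; what yours buys is a short, computational argument in each case once that proposition is granted, while the paper's stays structural and makes visible precisely where Theorem~\ref{sec:wsystem}, rather than mere preservation properties of quasi-discrete spectrum, is indispensable.
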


\bibliographystyle{amsplain}
\bibliography{biblio}

\end{document}